\theoremstyle{plain}
\newtheorem{thm}{Theorem}
\newtheorem{lem}[thm]{Lemma}
\newtheorem{pro}[thm]{Proposition}
\theoremstyle{definition}
\newtheorem{rem}[thm]{Remark}
\newtheorem{defi}[thm]{Definition}
\renewenvironment{proof}[1][]{\smallskip \noindent {\bf #1:}\hskip\labelsep}{\hspace*{\fill}$\square$\medskip\par} 
\newenvironment{mysmallmatrix}{\left(\begin{smallmatrix}}{\end{smallmatrix}\right)}
\def\R{\mathbb{R}}
\def\E{\mathbb{E}}
\def\phi{\varphi}
\def\eps{{\varepsilon}}
\def\diff{\mathrm{d}}
\begin{document}

\date{\today}
\title{Scaling limits for a random boxes model}
\author[1]{Frank Aurzada}
\author[1,2]{Sebastian Schwinn}
\affil[1]{Department of Mathematics, Technische Universit\"at Darmstadt}
\affil[2]{Graduate School CE, Technische Universit\"at Darmstadt}
\maketitle

\begin{abstract}
We consider random rectangles in $\R^2$ that are distributed according to a Poisson random measure, i.e., independently and uniformly scattered in the plane.
The distributions of the length and the width of the rectangles are heavy-tailed with different parameters. We investigate the scaling behaviour of the related random fields as the intensity of the random measure grows to infinity while the mean edge lengths tend to zero.
We characterise the arising scaling regimes, identify the limiting random fields and give statistical properties of these limits.
\end{abstract}

\noindent {\bf 2010 Mathematics Subject Classification:} 60G60 (primary); 60F05, 60G55 (secondary).

\bigskip

\noindent {\bf Keywords:} Gaussian random field, generalised random field, Poisson point process, Poisson random field, random balls model, random grain model, random field, stable random field.

\section{Introduction}
\subsection{Model}
Let $B(x,u)$ denote the two-dimensional rectangular box in $\R^2$ with centre at $x$ and edge lengths $u_i$ for $i=1,2$.
We consider a family of rectangles $(B({X}^{(j)},U^{(j)}))_j$ in~$\R^2$ (also referred to as boxes) generated by a Poisson point process $(X^{(j)},U^{(j)})_j$ in $\R^2 \times \R_+^2$.
Let $N$ be a Poisson random measure with intensity measure given by $n(\diff x, \diff u) = \lambda \diff x F(\diff u)$, where the intensity~$\lambda$ is a positive constant.
The probability measure $F$ on $\R_+^2$ is given by 
\begin{equation}\label{eq:Fdu}
 F(\diff u)=c_Ff_1(u_1)f_2(u_2) \diff u_1 \diff u_2,
\end{equation}
where $c_F>0$ is the normalising constant and $f_i(u_i) \sim {1}/{u_i^{\gamma_i+1}}$ as $u_i \to \infty$ for $i=1,2$ with $\gamma_1 \in (1,2)$ and $\gamma_1 < \gamma_2$. 
Hence, we assume w.l.o.g.\ that the tail of the distribution of the length is heavier than that of the width.
Moreover, we assume for the sake of convenience that we have $c_F=1$ (because one could simply think that $c_F$ is included in $\lambda$ in the case of $c_F \neq 1$) and we write $f(y) \sim g(y)$ if ${f(y)}/{g(y)} \to 1$.
Note that for $i=1,2$
$$
\int_{\R_+} u_i f_i(u_i) \diff u_i < \infty,
$$
i.e., the expected length and the expected width (and thus area) of a box are finite.

We discuss random fields defined on certain spaces of signed measures.
Let us denote by $\mathcal{M}_2$ the linear space of signed measures $\mu$ on $\R^2$ with finite total variation $\Vert \mu \Vert \coloneqq \vert \mu \vert (\R^2) < \infty$, where $\vert \mu \vert$ is the total variation measure of~$\mu$ (see, e.g., \cite[p.\ 116]{rudin1987complex}).
We are interested in the cumulative volume induced by the boxes and measured by $\mu \in \mathcal{M}_2$.
Therefore, we define the random field $J \coloneqq (J(\mu))_{\mu}$ on $\mathcal{M}_2$ by
$$
J(\mu) \coloneqq \int_{\R^2 \times \R_+^2} \mu(B(x,u)) N(\diff x,\diff u).
$$
Since our purpose is to deal with centred random fields, we introduce the notation for the corresponding centred Poisson random measure $\widetilde{N}\coloneqq N-n$ and centred integral $\widetilde{J}(\mu) \coloneqq J(\mu)-\E J(\mu)$. 

The goal of this paper is to obtain scaling limits for the random field~$\widetilde{J}$.
By scaling, we mean that the length and the width of the boxes are shrinking to zero, i.e., the scaled edge lengths are $\rho u_i$ with scaling parameter $\rho \to 0$, and that the expected number of boxes is increasing, i.e., the intensity $\lambda$ of the Poisson point process is tending to infinity as a function of $\rho$.
The precise behaviour of $\lambda=\lambda(\rho) \to \infty$ is specified in the different scaling regimes below.
Following the notational convention from above, we denote by $\widetilde{J}_\rho$ the centred random field corresponding to the Poisson random measure $N_\rho$ with the modified intensity $\lambda_\rho \coloneqq \lambda(\rho)$ and scaled edge lengths, i.e., $F_\rho$ is the image measure of $F$ by the change $u \mapsto \rho u$.

Next, we want to say a few words about the applications of random balls models.
The motivation comes from models from telecommunication networks.
A list of some references can be found at the beginning of Chapter~3 in~\cite{MR3186156}.
In dimension $d=1$, the model applies to the random variation in packet network traffic, where the traffic is generated by independent sources over time. 
The quantity of interest is the limiting distribution of the aggregated traffic as the time and the number of sources both tend to infinity (possibly with different rates).
These different rates can result in different scaling regimes of the superposed network traffic.
In some papers, the `traffic' additionally has a weight, which can be interpreted as the amount of required resources, the transmission power or the file sizes (see, e.g., \cite{BRETON20093633, Fasen2010,Kaj2008,MR3186156}).
Our model can be interpreted in the same way, when the length of the rectangle is thought to be transmission time and the width a weight representing, e.g., a transmission rate.
Alternatively, our random rectangles model could model a simplified two-dimensional wireless network.
Imagine that there are spatially uniformly distributed stations which are equipped with emitters.
In our case, the range for transmission (with constant power) of each station is given by a rectangular area and the total power of emission is measured by $\mu \in \mathcal{M}_2$.

\subsection{Related work}
A basic reference on limit theorems of Poisson integrals is \emph{Random processes by example} by Lifshits \cite{MR3186156}. The main references for us are \cite{Bierme2010,kaj2007}.

Kaj et al.\ \cite{kaj2007} study the limits of a spatial random field generated by independently and uniformly scattered random sets in $\R^d$. The sets (also referred to as grains) have a random volume but a predetermined shape.
The size of a grain is given by a \emph{single} heavy-tailed distribution, i.e., scaling means that the intensity~$\lambda$ grows to infinity while the mean volume~$\rho$ of the sets tends to zero.
They obtain three different limits depending on the relative speed at which $\lambda$ and $\rho$ are scaled.
Furthermore, they provide statistical properties of the limits.

In \cite{Bierme2010}, Bierm\'{e} et al.\ consider a random balls model of germ-grain type as well.
The predetermined shape of the grains is a ball, whose size depends on the scaling parameter $\rho$ and the random radius.
The radius distribution has a power-law behaviour either in zero or at infinity, i.e., they deal with zooming in and zooming out.
As main result, they can construct all self-similar, translation and rotation invariant Gaussian fields through zooming procedures in the random balls model. 

Breton and Dombry \cite{BRETON20093633} investigate weighted random balls models.
There, the balls additionally have random weights, whose law belongs to the normal domain of attraction of the $\alpha$-stable distribution with $\alpha \in (1,2]$.
They obtain different limiting random fields depending on the regimes and give statistical properties.

An anisotropic scaling is examined by Pilipauskait\.e and Surgailis \cite{pilipauskaite2016}.
They study the scaling limits of the random grain model on the plane with heavy-tailed grain area distribution.
The anisotropy is implemented by scaling the $x$- and $y$-direction at different rates.
Therefore, in the case of the grains being rectangles, the ratio of the edge lengths of \emph{all} rectangles tends either to  zero or to infinity under the scaling.
This property distinguishes their model from our random boxes model, where each rectangle has a random length-to-width ratio that does not change under the scaling.

Moreover, there are much more related papers that investigate limits of random balls models (e.g., \cite{GOBARD20151284}) and in particular of `teletraffic' models (see~\cite{Fasen2010,Fay2006,Kaj2008} and a list of further references in~\cite{MR3186156}).

\subsection{Overview}
In a nutshell, our paper extends the work from Bierm\'{e} et al.\ \cite{Bierme2010} and Kaj et al.~\cite{kaj2007} to a random boxes model where the size of a grain depends on \emph{two} differently heavy-tailed distributed random variables instead of just one random variable for the volume of the grain.
To be more precise, the shape of the grains is rectangular with a random length and a random width (mutually independent).
Therefore, our model differs from those in that the volume is given by the product of the length and the width, and each box simultaneously gets a random length-to-width ratio.
As a consequence, the main novelty of this work is that our random boxes model leads to a greater number of scaling regimes than other random balls models (e.g., \cite{Bierme2010,BRETON20093633,kaj2007}).
In particular, the so-called Poisson-lines scaling regime with its distinctive graphical representation has not arisen so far (see Section~\ref{sec:poisline}).
The class of limiting random fields contains linear random fields that are Gaussian, compensated Poisson integrals and integrals with respect to a stable random measure.

Let us outline different scaling regimes which result in different limits.
As mentioned above, the scaling regimes are defined by the joint behaviour of the scaling parameter $\rho$ and the intensity $\lambda_\rho$ of the Poisson point process as $\rho \to 0$.
We distinguish the following regimes:
\begin{itemize}
 \item High intensity regime: $\lambda_\rho \rho^{\gamma_1+\gamma_2} \to \infty$.
 \item Intermediate intensity regime: $ \lambda_\rho \rho^{\gamma_1+\gamma_2} \to a \in (0,\infty)$.
 \item Low intensity regime:  $\lambda_\rho \rho^{\gamma_1+\gamma_2} \to 0$.
\end{itemize}
The low intensity regime has to be divided once more into three different sub-regimes.
Our naming of these sub-regimes is based on the limits and on the objects that can be spotted in a graphical representation.
We distinguish the following sub-regimes:
\begin{itemize}
 \item Gaussian-lines scaling regime: $\lambda_\rho \rho^{\gamma_1+\eta} \to a \in (0,\infty)$ for some constant~\mbox{$\eta \in (0,\gamma_2)$} and thus $ \lambda_\rho \rho^{\gamma_1} \to \infty$. With regard to the scaling limit, it is of no importance to take care of the precise behaviour of~$\lambda_\rho \rho^{\gamma_2}$ (as long as $\lambda_\rho \rho^{\gamma_1+\gamma_2} \to 0$).
 \item Poisson-lines scaling regime: $ \lambda_\rho \rho^{\gamma_1} \to a \in (0,\infty)$ and thus $\lambda_\rho \rho^{\gamma_2} \to 0$.
 \item Points scaling regime: $\lambda_\rho \rho^{\gamma_1} \to 0$.
\end{itemize}
So far, we have assumed $\gamma_1 < 2$.
For $2 < \gamma_1 \leq \gamma_2$, the length and the width of the boxes have finite variances.
In this case, there is only one scaling limit and we just require that $\lambda_\rho \to \infty$ as $\rho \to 0$, i.e., there is no further condition on the joint behaviour of $\rho$ and $\lambda_\rho$.

The remainder of this paper is structured as follows:
Section~\ref{sec:main} contains the theorems of convergence to the limiting random fields (subdivided into the different scaling regimes in Sections~\ref{sec:main_high}--\ref{sec:main_low}, respectively), a comparison to the model where the length and the width of the boxes have finite variances (Section~\ref{sec:finitevar}), and further facts on statistical properties of the limits as well as a modified model with randomly rotated boxes (Section~\ref{sec:statprop}).
We collect some preliminaries in Section~\ref{sec:preliminaries} in order to prove the main results in Section~\ref{sec:proofmain}.

\section{Main results}\label{sec:main}
The following results are theorems of convergence of the finite-dimensional distributions of the centred and renormalised random field
$$
\left( \frac{\widetilde{J}_\rho(\mu)}{n_\rho} \right)_{\mu \in \mathcal{M}}
$$
to a limiting random field, where the corresponding space of signed measures~$\mathcal{M}$ and the function $n_\rho \coloneqq n(\rho)$ are defined in the theorems below, respectively.
We denote this convergence by $\frac{\widetilde{J}_\rho(\cdot)}{n_\rho} \xrightarrow{\mathcal{M}} W(\cdot)$, where in each case the limiting random field $(W(\mu))_\mu$ is specified there.

\subsection{High intensity regime}\label{sec:main_high}
We look at the high intensity regime where $\lambda_\rho \rho^{\gamma_1+\gamma_2} \to \infty$.
First, we define the space of signed measures $\mathcal{M}^{\gamma_1,\gamma_2}$ where the theorem of convergence holds.
\begin{defi}\label{def:M_high}
Let $\mathcal{M}^{\gamma_1,\gamma_2}$ be the subset of $\mathcal{M}_2$ with the following property:
For each $\mu \in \mathcal{M}^{\gamma_1,\gamma_2}$, there exist constants $C>0$ and $\alpha_i$ with $\gamma_i < \alpha_i \leq 2$ for $i=1,2$ such that for all $u \in \R_+^2$
\begin{equation}\label{eq:phibound}
 \int_{\R^2}  \mu (B(x,u)) ^2 \diff x \leq C \min\left(u_1,u_1^{\alpha_1} \right) \min\left(u_2,u_2^{\alpha_2} \right).
\end{equation}
\end{defi}

The limiting random field is given by a centred Gaussian linear random field. 
\begin{thm}\label{thm:high}
Let $\gamma_i \in (1,2)$ for $i=1,2$, $\lambda_\rho \to \infty$ and $\lambda_\rho \rho^{\gamma_1+\gamma_2} \to \infty$ as $\rho \to 0$.
Then, we have
$$
\frac{\widetilde{J}_\rho(\cdot)}{\sqrt{ \lambda_\rho \rho^{\gamma_1+\gamma_2}}} \xrightarrow{\mathcal{M}^{\gamma_1,\gamma_2}}  Z(\cdot)
$$
as $\rho \to 0$, where $(Z(\mu))_\mu$ is the centred Gaussian linear random field with covariance function
\begin{equation}\label{eq:cov_high_gaussian}
 C_Z(\mu,\nu)= \int_{\R^2 \times \R_+^2} \mu(B(x,u))\nu(B(x,u)) \frac{1}{u_1^{\gamma_1+1}} \frac{1}{u_2^{\gamma_2+1}} \diff(x,u). 
\end{equation}
\end{thm}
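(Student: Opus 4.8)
The plan is to prove convergence of the finite-dimensional distributions via the standard Lévy--Khintchine / cumulant approach for Poisson integrals. Fix $\mu_1,\dots,\mu_k \in \mathcal{M}^{\gamma_1,\gamma_2}$ and real coefficients $\theta_1,\dots,\theta_k$, and set $\mu \coloneqq \sum_{j} \theta_j \mu_j$ (so it suffices, by the Cramér--Wold device, to treat a single test measure $\mu$ and show $\widetilde J_\rho(\mu)/n_\rho \to Z(\mu)$ in distribution with the stated variance, then upgrade to joint convergence by linearity of $\mu \mapsto J(\mu)$ and polarisation of the covariance). Recall that for a Poisson random measure $N$ with intensity $n$, the log-characteristic function of the centred integral $\widetilde J(\mu) = \int \mu(B(x,u)) \widetilde N(\diff x,\diff u)$ is
$$
\log \E \exp\!\big(i t \widetilde J(\mu)\big) = \int_{\R^2\times\R_+^2} \Big(e^{i t \mu(B(x,u))} - 1 - i t\, \mu(B(x,u))\Big)\, n(\diff x,\diff u).
$$
After the scaling $u \mapsto \rho u$ and with intensity $\lambda_\rho$, and after dividing by $n_\rho = \sqrt{\lambda_\rho \rho^{\gamma_1+\gamma_2}}$, this becomes
$$
\Psi_\rho(t) \coloneqq \lambda_\rho \int_{\R^2\times\R_+^2} \Big(e^{i (t/n_\rho) \mu(B(x,\rho u))} - 1 - i (t/n_\rho)\, \mu(B(x,\rho u))\Big) \diff x\, f_1(u_1) f_2(u_2)\, \diff u_1 \diff u_2.
$$
Substituting $x \mapsto \rho x$ and $u_i \mapsto u_i/\rho$ (so that $f_i(u_i/\rho)\,\rho^{-1}\diff u_i$ appears, and $\mu(B(\rho x, \rho u)) = \mu(\rho\,B(x,u))$ — here one uses that $B(\rho x,\rho u) = \rho B(x,u)$), the prefactor $\lambda_\rho \rho^2 \cdot \rho^{-2} = \lambda_\rho$ combines with $t/n_\rho$; the claim is that after all substitutions the integrand's leading term is $-\tfrac{t^2}{2}\mu(B(x,u))^2\, u_1^{-\gamma_1-1} u_2^{-\gamma_2-1}$ and $\Psi_\rho(t) \to -\tfrac{t^2}{2} C_Z(\mu,\mu)$.

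The key steps, in order: (1) Change variables so that the renormalisation $n_\rho$ is absorbed and the intensity measure $\lambda_\rho \diff x\, f_1 f_2\, \diff u$ is transformed into $\diff x$ times a measure on $\R_+^2$ that converges vaguely, on $\R_+^2\setminus\{0\}$, to $u_1^{-\gamma_1-1} u_2^{-\gamma_2-1}\diff u$ as $\rho\to 0$ — this uses the tail asymptotics $f_i(u_i)\sim u_i^{-\gamma_i-1}$ and the fact that $\lambda_\rho\rho^{\gamma_1+\gamma_2}\to\infty$ forces the relevant $u$-region to escape to infinity (in the pre-scaling coordinates), which is exactly where the tail asymptotics apply. (2) Taylor-expand $e^{iz}-1-iz = -z^2/2 + O(|z|^3)$ and argue that the quadratic term gives the Gaussian limit while the cubic remainder is negligible; the smallness of $z = (t/n_\rho)\mu(B(x,\rho u))$ comes from $n_\rho\to\infty$ together with the boundedness of $\mu(B(x,\rho u))$ by $\|\mu\| \wedge (\text{something small})$. (3) Dominate everything uniformly in $\rho$ so that dominated convergence applies — this is where Definition~\ref{def:M_high} enters: the bound \eqref{eq:phibound}, $\int_{\R^2}\mu(B(x,u))^2\diff x \le C\min(u_1,u_1^{\alpha_1})\min(u_2,u_2^{\alpha_2})$ with $\gamma_i < \alpha_i \le 2$, is precisely what is needed to integrate $\mu(B(x,u))^2 u_1^{-\gamma_1-1}u_2^{-\gamma_2-1}$ (finiteness of $C_Z(\mu,\mu)$: near $u_i = 0$ use the exponent $\alpha_i - \gamma_i - 1 > -1$, near $u_i=\infty$ use $1-\gamma_i-1 = -\gamma_i < -1$) and, more importantly, to bound the cubic remainder term after using $|z|^3 \le |z|^2 \cdot (2\|\mu\|/n_\rho)$ and a split of the $u$-integral into the region where $\mu(B(x,\rho u))$ is bounded by the crude $\|\mu\|$ and the region where one uses the $\min(u_i, u_i^{\alpha_i})$ control.

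I expect the main obstacle to be step (3) — establishing a $\rho$-uniform integrable dominating function for both the quadratic and the cubic pieces simultaneously, across the full range of $u\in\R_+^2$, using only \eqref{eq:phibound} and the tail bound $f_i(u_i) \le C u_i^{-\gamma_i-1}$ valid for large $u_i$ (and boundedness of $f_i$ plus finite first moment for small/moderate $u_i$). One has to handle carefully the interaction of the two coordinates and the fact that the substituted density $\rho^{-1}f_i(u_i/\rho)$ behaves like $u_i^{-\gamma_i-1}$ only for $u_i \gg \rho$; for $u_i \lesssim \rho$ one needs a separate crude estimate (e.g.\ using $\int u_i f_i \diff u_i < \infty$ and the first elementary bound $\int \mu(B(x,u))^2\diff x \le C u_1 u_2$ from \eqref{eq:phibound}) and then check that this contribution vanishes in the limit because the $u$-region is being pushed to infinity by the high-intensity assumption. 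Once the dominating function is in place, the convergence of $\Psi_\rho(t)$ to $-\tfrac{t^2}{2}C_Z(\mu,\mu)$ is routine, and the joint finite-dimensional convergence follows from bilinearity: for $\mu = \sum_j \theta_j\mu_j \in \mathcal{M}^{\gamma_1,\gamma_2}$ (which is a linear subspace of $\mathcal{M}_2$, so closed under such combinations — a small point worth remarking) one gets $\sum_{j,l}\theta_j\theta_l C_Z(\mu_j,\mu_l)$ as the limiting variance, identifying the limit as the centred Gaussian field with covariance $C_Z$.
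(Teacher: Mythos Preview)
Your approach is correct and essentially the same as the paper's: split $\Psi(v)=-v^2/2+(\Psi(v)+v^2/2)$, show the quadratic part converges using the tail asymptotics of $f_i$ together with the bound \eqref{eq:phibound} (the paper isolates this as a lemma $\int g(u)\,F_\rho(\diff u)\sim \rho^{\gamma_1+\gamma_2}\int g(u)\,u_1^{-\gamma_1-1}u_2^{-\gamma_2-1}\,\diff u$, proved by exactly the four-region split in $u$ that you anticipate in step~(3)), and show the remainder vanishes via $|\Psi(v)+v^2/2|\le|v|^3$, $\int_{\R^2}|\mu(B(x,u))|^3\,\diff x\le\|\mu\|^3 u_1u_2$, and a companion lemma using the same bound \eqref{eq:phibound} for uniform domination. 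Two minor corrections to your write-up: drop the $x$-substitution (it is unnecessary and does nothing, since $\mu$ has no scaling property --- the paper works directly with $\mu(B(x,u))$ and $F_\rho(\diff u)$), and note that the ``vague convergence of the scaled intensity'' framing is off in the high-intensity regime since $\lambda_\rho F_\rho$ blows up; what actually happens is that $\lambda_\rho/n_\rho^2=\rho^{-(\gamma_1+\gamma_2)}$ exactly cancels the factor $\rho^{\gamma_1+\gamma_2}$ emerging from $F_\rho$.
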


\subsection{Intermediate intensity regime}\label{sec:main_inter}
In the intermediate intensity regime where $ \lambda_\rho \rho^{\gamma_1+\gamma_2} \to a \in (0,\infty)$, the space of signed measures is identical with the one in the high intensity regime.
The limiting random field consists of compensated Poisson integrals.

\begin{thm}\label{thm:inter}
Let $\gamma_i \in (1,2)$ for $i=1,2$, $\lambda_\rho \to \infty$ and $\lambda_\rho \rho^{\gamma_1+\gamma_2} \to 1$ as $\rho \to 0$.
Then, we have
$$
\widetilde{J}_\rho(\cdot) \xrightarrow{\mathcal{M}^{\gamma_1,\gamma_2}} J_I(\cdot)
$$
as $\rho \to 0$, where $ (J_I(\mu))_\mu$ is the linear random field of compensated Poisson integrals
$$
J_I(\mu) \coloneqq \int_{\R^2 \times \R_+^2} \mu(B(x,u)) \widetilde{N}_I(\diff x,\diff u),
$$
where the intensity measure is given by $n_I(\diff x,\diff u)= \diff x \frac{1}{u_1^{\gamma_1+1}}  \frac{1}{u_2^{\gamma_2+1}} \diff u_1 \diff u_2$.
\end{thm}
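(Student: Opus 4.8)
\smallskip
\noindent\textit{Proof sketch.} The plan is to prove convergence of the finite-dimensional distributions by computing characteristic functions. Fix $m\in\mathbb{N}$, measures $\mu_1,\dots,\mu_m\in\mathcal{M}^{\gamma_1,\gamma_2}$ and reals $\theta_1,\dots,\theta_m$, and set $g(x,u)\coloneqq\sum_{k=1}^m\theta_k\mu_k(B(x,u))$. By the L\'evy--Khintchine formula for integrals with respect to a compensated Poisson random measure,
$$
\log\E\exp\!\Big(\mathrm{i}\sum_{k=1}^m\theta_k\widetilde{J}_\rho(\mu_k)\Big)=\int_{\R^2\times\R_+^2}\big(e^{\mathrm{i}g(x,u)}-1-\mathrm{i}g(x,u)\big)\,n_\rho(\diff x,\diff u),
$$
and the same identity holds for $\sum_k\theta_k J_I(\mu_k)$ with $n_\rho$ replaced by $n_I$; that the $n_I$-integral is finite, so that $J_I$ is a well-defined $L^2$-field, follows from $|e^{\mathrm{i}y}-1-\mathrm{i}y|\le\tfrac12 y^2$, from \eqref{eq:phibound}, and from $\int_{\R_+}\min(t,t^{\alpha_i})\,t^{-\gamma_i-1}\,\diff t<\infty$ (using $\gamma_i<\alpha_i\le2$ and $\gamma_i>1$). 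By the Cram\'er--Wold device and L\'evy's continuity theorem it therefore suffices to show that the $n_\rho$-integral above converges to the $n_I$-integral as $\rho\to0$.

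First I would change variables $u=\rho v$ in the $n_\rho$-integral. Since $F_\rho$ is the image of $F$ under $u\mapsto\rho u$ and $c_F=1$, one has $n_\rho(\diff x,\diff u)=\lambda_\rho\rho^{-2}f_1(u_1/\rho)f_2(u_2/\rho)\,\diff x\,\diff u$, and
$$
\lambda_\rho\rho^{-2}f_1(u_1/\rho)f_2(u_2/\rho)=\big(\lambda_\rho\rho^{\gamma_1+\gamma_2}\big)\prod_{i=1}^2\frac{(u_i/\rho)^{\gamma_i+1}f_i(u_i/\rho)}{u_i^{\gamma_i+1}}\;\longrightarrow\;\frac{1}{u_1^{\gamma_1+1}}\,\frac{1}{u_2^{\gamma_2+1}}
$$
for every fixed $(u_1,u_2)\in\R_+^2$, because $\lambda_\rho\rho^{\gamma_1+\gamma_2}\to1$ and $f_i(t)\sim t^{-\gamma_i-1}$ as $t\to\infty$. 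Hence the integrand of the $n_\rho$-integral converges pointwise to that of the $n_I$-integral, and the remaining task is to interchange limit and integral.

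For that interchange I would use the uniform bound $|g(x,u)|\le M\coloneqq\sum_k|\theta_k|\,\Vert\mu_k\Vert$, which gives $|e^{\mathrm{i}g}-1-\mathrm{i}g|\le\tfrac12 g^2\le\tfrac{m}{2}\sum_k\theta_k^2\,\mu_k(B(x,u))^2$, together with \eqref{eq:phibound}. For a measurable $A\subseteq\R_+^2$ the contribution of $\R^2\times A$ to $\int|e^{\mathrm{i}g}-1-\mathrm{i}g|\,n_\rho$ is then bounded by $C\lambda_\rho\int_A\prod_{i=1}^2\min(u_i,u_i^{\alpha_i})\,u_i^{-\gamma_i-1}\,\diff u$, which for $A=\R_+^2$ is of order $\lambda_\rho\rho^{\gamma_1+\gamma_2}\to1$ and, crucially, tends to $0$ as $A$ is reduced to $\{\,u_i<\eps\ \text{or}\ u_i>R\ \text{for some }i\,\}$ with $\eps\to0$ and $R\to\infty$, uniformly in small $\rho$; this quantitative tail estimate — which amounts to controlling $\lambda_\rho\int_0^\infty\min(\rho v,(\rho v)^{\alpha_i})f_i(v)\,\diff v$ according to whether $\rho v\le1$ or $\rho v>1$, and which I would isolate in Section~\ref{sec:preliminaries} — is where the two-sided control in \eqref{eq:phibound} matters: the factor $u_i^{\alpha_i}$ with $\alpha_i>\gamma_i$ tames the small-$u_i$ regime and the factor $u_i$ the large-$u_i$ regime. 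On the complementary block $\{\eps\le u_i\le R,\ i=1,2\}$ the $n_\rho$-density is bounded uniformly in small $\rho$ (there $u_i/\rho$ is large, so the regular variation of $f_i$ applies), whence dominated convergence yields convergence of the integral over that block; letting $\eps\to0$, $R\to\infty$ and using the same splitting to control the tails of the $n_I$-integral finishes the proof.

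I expect the main obstacle to be precisely this last uniform tail estimate: the pointwise convergence of the integrand is immediate, but making the passage to the limit rigorous needs the sharp bounds on $\lambda_\rho\int_0^\infty\min(\rho v,(\rho v)^{\alpha_i})f_i(v)\,\diff v$ in every range of $v$. Everything else is bookkeeping; in particular, and in contrast with Theorem~\ref{thm:high}, no renormalisation is needed here because $\lambda_\rho\rho^{\gamma_1+\gamma_2}$ stays bounded away from $0$ and $\infty$, so the limit retains the full Poisson (jump) structure rather than collapsing to a Gaussian one.
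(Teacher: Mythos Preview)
Your approach is correct and essentially identical to the paper's: both integrate $\Psi(\mu(B(x,u)))$ over $x$ first, invoke the bound~\eqref{eq:phibound}, and then establish the convergence in $u$; the paper simply packages that last step as a standalone asymptotic lemma (Lemma~\ref{lem:intasympf}), splitting $\R_+^2$ at the $\rho$-dependent cutoffs $u_i=\rho N$ rather than at fixed $\eps,R$ as you do. One slip to correct in your write-up: the displayed bound $C\lambda_\rho\int_A\prod_i\min(u_i,u_i^{\alpha_i})\,u_i^{-\gamma_i-1}\,\diff u$ is not valid as written, since $\rho^{-1}f_i(u_i/\rho)$ is dominated by $u_i^{-\gamma_i-1}$ only up to a factor $\rho^{\gamma_i}$ and only for $u_i/\rho$ large --- but your subsequent discussion of $\lambda_\rho\int_0^\infty\min(\rho v,(\rho v)^{\alpha_i})f_i(v)\,\diff v$ split at $\rho v=1$ shows you have the right argument in mind.
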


We refer to Remark \ref{rem:inter_a} below for the result in the (general) intermediate intensity regime with $\lambda_\rho \rho^{\gamma_1+\gamma_2} \to a \in (0,\infty)$ as $\rho \to 0$, where $a$ not necessarily equals $1$.

\subsection{Low intensity regime}\label{sec:main_low}
The low intensity regime is defined by $\lambda_\rho \rho^{\gamma_1+\gamma_2} \to 0$, which is divided once more into three different sub-regimes. In these sub-regimes, we additionally have to assume that the density function of the length of a box for \emph{small} values is bounded, i.e., we assume that there is some $c_{f_1}>0$ such that the inequality
\begin{equation}\label{eq:addrequirelow}
 f_1(u_1) \leq \frac{c_{f_1}}{u_1^{\gamma_1+1}}
\end{equation}
holds for \emph{all} $u_1 \in \R_+$.
This technical assumption ensures the existence of a suitable majorant for $f_1$ in the proofs below.
From now on, we treat the three sub-regimes separately.

\subsubsection{Gaussian-lines scaling regime}
We define the space of signed measures $\mathcal{M}_{L}$ for the Gaussian-lines scaling regime where $\lambda_\rho \rho^{\gamma_1+\eta} \to a \in (0,\infty)$ for some $\eta \in (0,\gamma_2)$.
\begin{defi}\label{def:M_lowgaussian}
Let $\mathcal{M}_{L}$ be the subset of $\mathcal{M}_2$ where 
\begin{itemize}
 \item each $\mu \in \mathcal{M}_{L}$ has a density function $f_\mu$, i.e., $\mu(\diff x) = f_\mu(x) \diff x$;
 \item for each $\mu \in \mathcal{M}_{L}$ the density function $f_\mu$ is bounded and decays at least exponentially fast, i.e., there exist constants $C_\mu>0$ and $c_\mu>0$ such that for all $x \in \R^2$
 \begin{equation}\label{eq:decay_mu}
  \vert f_\mu(x) \vert \leq C_\mu e^{-c_\mu(\vert x_1 \vert + \vert x_2 \vert)};
 \end{equation}
 \item for each $\mu \in \mathcal{M}_{L}$ the pointwise convergence
 \begin{equation}\label{eq:low_pointwiseconv}
  \frac{1}{\eps} \int_{B\left( x, \begin{mysmallmatrix} u_1 \\ \eps \end{mysmallmatrix}\right)} f_\mu(y) \diff y \to \int_{\left[x_1-\frac{u_1}{2},x_1+\frac{u_1}{2}\right]} f_\mu(y_1,x_2) \diff y_1  
 \end{equation}
 as $\eps \to 0$ holds for all $(x,u_1) \in \R^2 \times \R_+$.
\end{itemize}
\end{defi}

In the Gaussian-lines scaling regime, we require a further condition on the `lighter' tail index $\gamma_2$, namely $\gamma_2>2$.
Consequently, the width of a box has a finite variance.
The limiting random field is a centred Gaussian linear random field.

\begin{thm}\label{thm:lowgaus}
Let $\gamma_1 \in (1,2)$, $\gamma_2>2$, $\lambda_\rho \to \infty$ and $\lambda_\rho \rho^{\gamma_1+\eta} \to 1$ for some $\eta \in (0,\gamma_2)$ as $\rho \to 0$.
Then, we have
$$
\frac{\widetilde{J}_\rho(\cdot) }{{\rho}^{1-\eta / 2}} \xrightarrow{\mathcal{M}_{L}} Y(\cdot)
$$
as $\rho \to 0$, where $(Y(\mu))_\mu$ is the centred Gaussian linear random field with covariance function
\begin{equation}\label{eq:cov_low_gaussian}
 C_Y(\mu,\nu) = \int\limits_{\R^2 \times \R_+^2} \int\limits_{\left[x_1-\frac{u_1}{2},x_1+\frac{u_1}{2}\right]^2} f_\mu(y_1,x_2) f_\nu(y_2,x_2) \diff y \frac{u_2^2 f_2(u_2)}{u_1^{\gamma_1+1}} \diff (x,u).
\end{equation}
\end{thm}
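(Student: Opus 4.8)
\emph{Proof plan.} The plan is, first, to reduce to a one-dimensional statement. Since $\mathcal{M}_L$ is a linear subspace of $\mathcal{M}_2$ (its three defining properties are stable under finite linear combinations), the map $\mu\mapsto\widetilde{J}_\rho(\mu)$ is linear, and $C_Y$ is bilinear, the Cram\'er--Wold device reduces the theorem to showing that for each fixed $\mu\in\mathcal{M}_L$,
$$
\frac{\widetilde{J}_\rho(\mu)}{\rho^{1-\eta/2}}\xrightarrow{d}\mathcal{N}\bigl(0,C_Y(\mu,\mu)\bigr)\qquad\text{as }\rho\to0.
$$
Here I would first verify $C_Y(\mu,\mu)<\infty$: with $g_\mu(x,u_1)\coloneqq\int_{[x_1-u_1/2,\,x_1+u_1/2]}f_\mu(y_1,x_2)\,\diff y_1$, the inner $y$-integral in \eqref{eq:cov_low_gaussian} is $g_\mu(x,u_1)^2$, and \eqref{eq:decay_mu} together with Young's inequality gives $\int_{\R^2}g_\mu(x,u_1)^2\,\diff x\leq C\min(u_1,u_1^2)$; since $\int_{\R_+}\min(u_1,u_1^2)u_1^{-\gamma_1-1}\,\diff u_1<\infty$ (using $\gamma_1\in(1,2)$) and $\int_{\R_+}u_2^2 f_2(u_2)\,\diff u_2<\infty$ (using $\gamma_2>2$), $C_Y(\mu,\mu)$ is finite. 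By L\'evy's continuity theorem it then suffices to prove $\log\E\exp\bigl(i\theta\widetilde{J}_\rho(\mu)/\rho^{1-\eta/2}\bigr)\to-\tfrac{\theta^2}{2}C_Y(\mu,\mu)$ for every $\theta\in\R$.

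Next I would write this log-characteristic function, by the L\'evy--Khintchine formula for integrals against the centred Poisson random measure, as
$$
\lambda_\rho\int_{\R^2\times\R_+^2}\Bigl(e^{i\theta\mu(B(x,u))/\rho^{1-\eta/2}}-1-i\theta\mu(B(x,u))/\rho^{1-\eta/2}\Bigr)\,\diff x\,F_\rho(\diff u),
$$
and then substitute $u_1=s_1$, $u_2=\rho s_2$, which turns $F_\rho(\diff u)$ into $\rho^{-1}f_1(s_1/\rho)f_2(s_2)\,\diff s_1\diff s_2$. Writing $\lambda_\rho\rho^{-1}f_1(s_1/\rho)=\bigl(\lambda_\rho\rho^{\gamma_1+\eta}\bigr)\cdot\rho^{-\eta}\cdot\bigl(\rho^{-\gamma_1-1}f_1(s_1/\rho)\bigr)$ and using $\lambda_\rho\rho^{\gamma_1+\eta}\to1$, it remains to show
$$
I_\rho\coloneqq\int_{\R^2\times\R_+^2}\rho^{-\eta}\bigl(e^{iz_\rho}-1-iz_\rho\bigr)\bigl(\rho^{-\gamma_1-1}f_1(s_1/\rho)\bigr)f_2(s_2)\,\diff x\,\diff s_1\diff s_2\longrightarrow-\tfrac{\theta^2}{2}C_Y(\mu,\mu),
$$
where $z_\rho\coloneqq\theta\,\mu\bigl(B(x,(s_1,\rho s_2))\bigr)/\rho^{1-\eta/2}$.

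For the pointwise limit of the integrand, I would fix $(x,s_1,s_2)$ with $s_2>0$ and apply the hypothesis \eqref{eq:low_pointwiseconv} with $\eps=\rho s_2\to0$, which gives $\mu\bigl(B(x,(s_1,\rho s_2))\bigr)/\rho\to s_2\,g_\mu(x,s_1)$; consequently $z_\rho=\theta\rho^{\eta/2}\cdot\mu\bigl(B(x,(s_1,\rho s_2))\bigr)/\rho\to0$, and a second-order Taylor expansion of $z\mapsto e^{iz}-1-iz$ yields $\rho^{-\eta}\bigl(e^{iz_\rho}-1-iz_\rho\bigr)\to-\tfrac{\theta^2}{2}s_2^2\,g_\mu(x,s_1)^2$. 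At the same time $\rho^{-\gamma_1-1}f_1(s_1/\rho)=(s_1/\rho)^{\gamma_1+1}f_1(s_1/\rho)\cdot s_1^{-\gamma_1-1}\to s_1^{-\gamma_1-1}$ because $f_1(t)\sim t^{-\gamma_1-1}$. So the integrand of $I_\rho$ converges pointwise to $-\tfrac{\theta^2}{2}\,s_2^2\,g_\mu(x,s_1)^2\,s_1^{-\gamma_1-1}f_2(s_2)$, whose integral over $\R^2\times\R_+^2$ equals $-\tfrac{\theta^2}{2}C_Y(\mu,\mu)$ (relabel $(s_1,s_2)$ as $(u_1,u_2)$ and recall $g_\mu(x,u_1)^2$ is the inner integral in \eqref{eq:cov_low_gaussian}).

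The hard part is to justify the passage to the limit in $I_\rho$. From $|e^{iz}-1-iz|\leq z^2/2$ one has the $\rho$-uniform bound $\bigl|\rho^{-\eta}(e^{iz_\rho}-1-iz_\rho)\bigr|\leq\tfrac{\theta^2}{2}\,\mu\bigl(B(x,(s_1,\rho s_2))\bigr)^2/\rho^2$, and from \eqref{eq:addrequirelow} one has $\rho^{-\gamma_1-1}f_1(s_1/\rho)\leq c_{f_1}s_1^{-\gamma_1-1}$ for all $\rho$ --- the only use of \eqref{eq:addrequirelow}. I would then split the $s_2$-range at a large threshold $M$. On $\{s_2>M\}$, where the box width $\rho s_2$ need not be small, a crude estimate such as $\int_{\R^2}\mu(B(x,u))^2\,\diff x\leq C\min(u_1,u_1^2)\min(u_2,u_2^2)$ (a consequence of \eqref{eq:decay_mu}), treating $M<s_2\leq1/\rho$ and $s_2>1/\rho$ separately and using $\gamma_2>2$, gives $\limsup_{\rho\to0}\bigl|I_\rho^{(s_2>M)}\bigr|\leq C\int_M^\infty u_2^2 f_2(u_2)\,\diff u_2$, which tends to $0$ as $M\to\infty$. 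On $\{s_2\leq M\}$, where $\rho s_2\leq\rho M\to0$, the exponential decay \eqref{eq:decay_mu} yields $\bigl|\mu\bigl(B(x,(s_1,\rho s_2))\bigr)\bigr|\leq C\,\rho s_2\,e^{-c_\mu|x_2|}\int_{[x_1-s_1/2,\,x_1+s_1/2]}e^{-c_\mu|y_1|}\,\diff y_1$ uniformly for $\rho\leq1$, and the resulting majorant is integrable: the $x_2$-integral of $e^{-2c_\mu|x_2|}$ is finite, $\int_0^M u_2^2 f_2(u_2)\,\diff u_2<\infty$, and the $(x_1,s_1)$-integral of $\bigl(\int_{[x_1-s_1/2,\,x_1+s_1/2]}e^{-c_\mu|y_1|}\,\diff y_1\bigr)^2 s_1^{-\gamma_1-1}$ is finite by the same $\min(s_1,s_1^2)$-estimate as in the first paragraph --- this is exactly where the long, thin boxes (the ``lines'' of the regime's name) are controlled and where $\gamma_1>1$ enters. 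Dominated convergence then gives $I_\rho^{(s_2\leq M)}\to-\tfrac{\theta^2}{2}C_Y^{(M)}(\mu,\mu)$ with $C_Y^{(M)}\uparrow C_Y(\mu,\mu)$ by monotone convergence; letting $\rho\to0$ and then $M\to\infty$ finishes the argument. The remaining ingredients --- the Cram\'er--Wold reduction, the L\'evy--Khintchine identity, the change of variables, and the Taylor expansion --- are routine once this domain splitting is in place.
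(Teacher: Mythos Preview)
Your argument is correct, and it reaches the limit through a decomposition that is genuinely different from the paper's. The paper does not truncate in $u_2$; instead it splits $\Psi(v)=-\tfrac{v^2}{2}+\bigl(\Psi(v)+\tfrac{v^2}{2}\bigr)$ and applies dominated convergence to each piece over the full range $\R^2\times\R_+^2$. For the quadratic piece it uses the maximal-function type bound $\sup_{\rho>0}\frac{1}{\rho u_2}\int_{x_2-\rho u_2/2}^{x_2+\rho u_2/2}e^{-c_\mu|y_2|}\,\diff y_2\le g_2(x_2)\coloneqq\min\bigl(1,\tfrac{2}{c_\mu|x_2|}\bigr)$ together with $\int_{\R_+}u_2^2 f_2(u_2)\,\diff u_2<\infty$; for the remainder it uses a higher-order inequality $\bigl|\Psi(v)+\tfrac{v^2}{2}\bigr|\le|v|^p$ with $p=3$ if $\gamma_2>3$ and $p=\gamma_2-\eps\in(2,3)$ otherwise, which forces a case distinction on~$\gamma_2$.

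Your truncation buys you a uniform treatment: by restricting first to $\{s_2\le M\}$ you can replace the maximal bound $g_2(x_2)$ by the sharper $e^{-c_\mu|x_2|}$ (with an $M$-dependent constant), and you never need anything beyond $|\Psi(v)|\le v^2/2$, so the case split at $\gamma_2=3$ disappears. The price is the additional tail step on $\{s_2>M\}$, including the sub-split at $s_2=1/\rho$; this is where $\gamma_2>2$ enters twice (once for $\rho^{-1}\int_{1/\rho}^\infty s_2 f_2\,\diff s_2=O(\rho^{\gamma_2-2})$ and once for $\int_M^\infty s_2^2 f_2\,\diff s_2\to0$), exactly as you note. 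Both routes are clean; yours is arguably more elementary in that it avoids the third-order Taylor bound, while the paper's avoids the $\limsup$/double-limit argument.
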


We refer to Remark \ref{rem:lowgaus_a} below for the result in the (general) Gaussian-lines scaling regime with $\lambda_\rho \rho^{\gamma_1+\eta} \to a \in (0,\infty)$ as $\rho \to 0$, where $a$ not necessarily equals $1$.

\subsubsection{Poisson-lines scaling regime}\label{sec:poisline}
In the Poisson-lines scaling regime where $ \lambda_\rho \rho^{\gamma_1} \to a \in (0,\infty)$, we provide the theorem of convergence to a random field consisting of compensated Poisson integrals.
The corresponding space of signed measures coincides with the one from the Gaussian-lines scaling regime.

\begin{thm}\label{thm:lowpois}
Let $\gamma_1 \in (1,2)$, $\gamma_1 < \gamma_2$, $\lambda_\rho \to \infty$ and $\lambda_\rho \rho^{\gamma_1} \to 1$ as $\rho \to 0$.
Then, we have
$$
\frac{\widetilde{J}_\rho(\cdot) }{\rho} \xrightarrow{\mathcal{M}_{L}} J_L(\cdot)
$$
as $\rho \to 0$, where $(J_L(\mu))_\mu$ is the linear random field of compensated Poisson integrals
\begin{equation}\label{eq:poissonlineslimit}
 J_L(\mu) \coloneqq \int_{\R^2 \times \R_+^2} \left( u_2 \int_{\left[x_1-\frac{u_1}{2},x_1+\frac{u_1}{2}\right]} f_\mu(y_1,x_2) \diff y_1 \right) \widetilde{N}_L(\diff x,\diff u),
\end{equation}
where the intensity measure is given by
\begin{equation}\label{eq:int_meas_lines}
 n_L(\diff x,\diff u)= \diff x \frac{1}{u_1^{\gamma_1+1}} \diff u_1   f_2(u_2) \diff u_2.
\end{equation}
\end{thm}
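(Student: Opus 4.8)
The plan is to prove convergence of the finite-dimensional distributions through characteristic functions. Since $\widetilde J_\rho$ and $J_L$ are both linear in $\mu$ and $\mathcal{M}_{L}$ is a linear space, applying the assertion to a finite linear combination $\mu=\sum_k\theta_k\mu_k\in\mathcal{M}_{L}$ reduces everything to showing $\widetilde J_\rho(\mu)/\rho\to J_L(\mu)$ in distribution for each fixed $\mu\in\mathcal{M}_{L}$, i.e.\ $\log\E\exp(\tfrac{\mathrm{i}s}{\rho}\widetilde J_\rho(\mu))\to\log\E\exp(\mathrm{i}s\,J_L(\mu))$ for every $s\in\R$. Writing $h_\mu(x,u):=u_2\int_{[x_1-u_1/2,\,x_1+u_1/2]}f_\mu(y_1,x_2)\,\diff y_1$ for the integrand in \eqref{eq:poissonlineslimit}, I would first check that $J_L(\mu)$ is well defined, i.e.\ $\int\min(h_\mu^2,\lvert h_\mu\rvert)\,\diff n_L<\infty$; the estimate needed there is the same one used in the domination below.

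By the L\'evy--Khintchine formula for compensated Poisson integrals, and since $F_\rho$ is the image of $F$ under $u\mapsto\rho u$,
$$
\log\E\exp\!\Big(\tfrac{\mathrm{i}s}{\rho}\widetilde J_\rho(\mu)\Big)=\lambda_\rho\int_{\R^2\times\R_+^2}\Big(e^{\mathrm{i}s\mu(B(x,\rho u))/\rho}-1-\tfrac{\mathrm{i}s}{\rho}\mu(B(x,\rho u))\Big)\diff x\,F(\diff u).
$$
The long edge must be rescaled to macroscopic size: substitute $v_1=\rho u_1$, so that $f_1(u_1)\,\diff u_1=\rho^{-1}f_1(v_1/\rho)\,\diff v_1$ and the box becomes $B(x,(v_1,\rho u_2))$, which is long in direction~$1$ and thin in direction~$2$ on the bulk of the $u_2$-mass. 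For fixed $(x,v_1,u_2)$ with $v_1>0$, combining $f_1(t)\sim t^{-\gamma_1-1}$ with $\lambda_\rho\rho^{\gamma_1}\to1$ gives $\lambda_\rho\rho^{-1}f_1(v_1/\rho)\to v_1^{-\gamma_1-1}$, and \eqref{eq:low_pointwiseconv} with $\eps=\rho u_2$ gives $\rho^{-1}\mu(B(x,(v_1,\rho u_2)))\to h_\mu(x,(v_1,u_2))$. Hence the integrand converges pointwise to $(e^{\mathrm{i}s h_\mu}-1-\mathrm{i}s h_\mu)\,v_1^{-\gamma_1-1}f_2(u_2)$; relabelling $v_1\mapsto u_1$, this integrates against $\diff x\,\diff u_1\,\diff u_2$ to exactly $\log\E\exp(\mathrm{i}s\,J_L(\mu))$ with $n_L$ as in \eqref{eq:int_meas_lines}.

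The core of the proof, and its main obstacle, is justifying the passage to the limit under the integral sign. I would use $\lvert e^{\mathrm{i}z}-1-\mathrm{i}z\rvert\le\min(z^2/2,2\lvert z\rvert)$ together with: the $\rho$-uniform bound $\lambda_\rho\rho^{-1}f_1(v_1/\rho)\le c_{f_1}\lambda_\rho\rho^{\gamma_1}v_1^{-\gamma_1-1}\le Cv_1^{-\gamma_1-1}$ furnished by \eqref{eq:addrequirelow}; the elementary estimates $\lvert\mu(B(x,\rho u))\rvert\le\rho^2u_1u_2\Vert f_\mu\Vert_\infty$, $\int_{\R^2}\lvert\mu(B(x,(v_1,\rho u_2)))\rvert\,\diff x\le\rho u_2v_1\Vert\mu\Vert$, and, via \eqref{eq:decay_mu}, the spatially decaying bound $\rho^{-1}\lvert\mu(B(x,(v_1,\rho u_2)))\rvert\le\kappa\,u_2v_1e^{-c_\mu(\lvert x_1\rvert+\lvert x_2\rvert)}$ on the region $\rho u_2\le1,\ v_1\le1$. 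I would then split the domain into three parts. On $\{\rho u_2>1\}$ (thick boxes) the contribution vanishes: by the linear bound it is at most a constant times $(\lambda_\rho\rho)\big(\int u_1f_1(u_1)\,\diff u_1\big)\big(\int_{1/\rho}^\infty u_2f_2(u_2)\,\diff u_2\big)$, which is of the order of $\lambda_\rho\rho^{\gamma_2}\to0$. On $\{\rho u_2\le1,\ v_1\le1\}$ one dominates by $C_s\min(z_0^2,z_0)\,v_1^{-\gamma_1-1}f_2(u_2)$ with $z_0=\kappa u_2v_1e^{-c_\mu(\lvert x_1\rvert+\lvert x_2\rvert)}$; the exponential factor integrates out in $x$, the bound $\min(z_0^2,z_0)\le\kappa u_2v_1\min(\kappa u_2v_1,1)e^{-c_\mu(\lvert x_1\rvert+\lvert x_2\rvert)}$ makes the $v_1$-integral of order $u_2^{\gamma_1}$ (using $1<\gamma_1<2$), and the final $u_2$-integral $\int u_2^{\gamma_1-\gamma_2-1}\,\diff u_2$ converges precisely because $\gamma_1<\gamma_2$. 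On $\{\rho u_2\le1,\ v_1>1\}$ the weight $v_1^{-\gamma_1-1}$ is integrable at infinity and, carrying out the $x_1$-integration first to absorb the spread of the long box, one again obtains an integrable bound. (If a single $\rho$-free majorant is awkward to exhibit, one may instead invoke the generalised dominated convergence theorem with $\rho$-dependent majorants of convergent integrals.) The delicate point throughout is the corner $v_1\to0$, $u_2\to\infty$, where the short width and the long length interact and only $\gamma_1<\gamma_2$ keeps the integral finite.

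Once $\log\E\exp(\tfrac{\mathrm{i}s}{\rho}\widetilde J_\rho(\mu))\to\log\E\exp(\mathrm{i}s\,J_L(\mu))$ is established for all $s\in\R$ and $\mu\in\mathcal{M}_{L}$, L\'evy's continuity theorem yields $\widetilde J_\rho(\mu)/\rho\to J_L(\mu)$ in distribution, and by the linearity reduction this is the claimed convergence $\widetilde J_\rho(\cdot)/\rho\xrightarrow{\mathcal{M}_{L}}J_L(\cdot)$.
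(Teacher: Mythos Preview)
Your proof follows the same overall route as the paper: reduce to one-dimensional convergence via linearity (Cram\'er--Wold), work with the log-characteristic function, perform the substitution that makes the long edge macroscopic while leaving $u_2$ on its original scale (your $v_1=\rho u_1$ starting from $F$ is exactly the paper's $u_2=\rho\tilde u_2$ starting from $F_\rho$), obtain pointwise convergence of the integrand from \eqref{eq:low_pointwiseconv} together with $f_1(t)\sim t^{-\gamma_1-1}$, $\lambda_\rho\rho^{\gamma_1}\to1$ and \eqref{eq:addrequirelow}, and then justify taking the limit under the integral.

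The only substantive difference is in the domination step. The paper avoids any domain splitting: it uses the maximal function $m_\mu^*$ from \eqref{eq:m_mu^*} together with the interpolation $\min(|v|,v^2)\le\min(|v|^{\gamma_1-\eps},|v|^{\gamma_1+\eps})$ and the product trick \eqref{eq:low_estimate2} to produce a single global $\rho$-free majorant \eqref{eq:majorantintegrandlowpois}, whose integrability follows from Lemma~\ref{lem:m_mu}(ii) and the choice $\gamma_1+\eps<\min(\gamma_2,2)$. Your three-piece decomposition---killing the thick-box region $\{\rho u_2>1\}$ directly via $\lambda_\rho\rho^{\gamma_2}\to0$, and building explicit majorants on $\{v_1\le1\}$ and $\{v_1>1\}$---is also correct and more elementary, at the cost of more bookkeeping. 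Two small comments: your $v_1$-integral on $\{v_1\le1\}$ is actually $O(u_2^2)$ for small $u_2$ and $O(u_2^{\gamma_1})$ for large $u_2$, but both are fine against $f_2$ (finite first moment near $0$, $\gamma_1<\gamma_2$ at infinity); and on $\{v_1>1\}$ the bound $C\,u_2\,e^{-c_\mu|x_2|}\,g_1(x_1,v_1)\,v_1^{-\gamma_1-1}f_2(u_2)$ with $g_1$ as in \eqref{eq:g_1} is already a genuine pointwise integrable majorant (since $\int_\R g_1(x_1,v_1)\,\diff x_1=Cv_1$ and $\gamma_1>1$), so the generalised DCT you mention as a fallback is not needed.
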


We refer to Remark \ref{rem:lowpois_a} below for the result in the (general) Poisson-lines scaling regime with $\lambda_\rho \rho^{\gamma_1} \to a \in (0,\infty)$ as $\rho \to 0$, where $a$ not necessarily equals $1$.

\begin{figure}[ht]
\begin{center}
\includegraphics[trim=5cm 5cm 4.5cm 5cm, clip=true, width=0.4\linewidth]{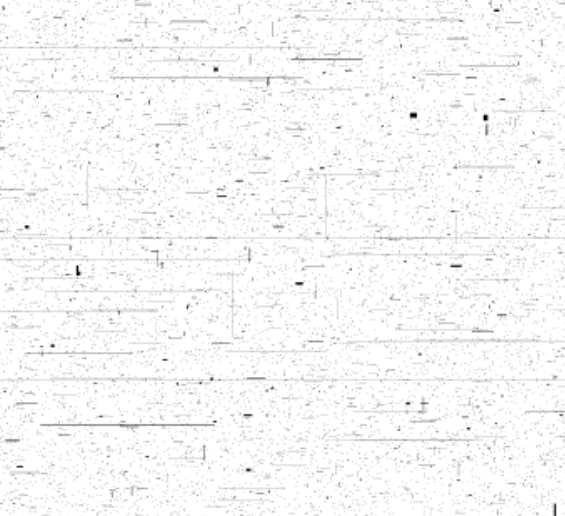}
\hspace{0.1\linewidth}
\includegraphics[trim=5cm 5cm 4.5cm 5cm, clip=true, width=0.4\linewidth]{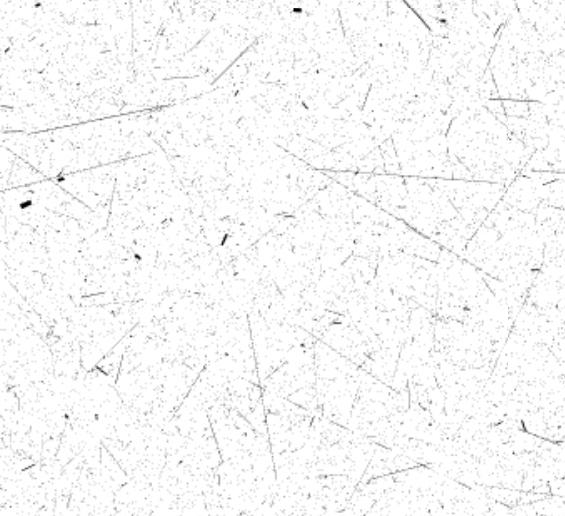}
\caption{Poisson-lines scaling regime.}\label{fig:lowpoisson}
\end{center}
\end{figure}

In the Poisson-lines scaling regime, we have $ \lambda_\rho \rho^{\gamma_1} \to a \in (0,\infty)$ and  $ \lambda_\rho \rho^{\gamma_2} \to 0$ as $\rho \to 0$.
This indicates a different behaviour for the length and the width of the boxes.
For a graphical representation, we ran simulations of the Poisson point processes for some small $\rho$ and appropriate $\lambda_\rho$.
We generated random Poisson points, where we chose Pareto distributions for the length and the width of the boxes.
Then, we plotted the boxes that are filled with black colour.
Two samples of the random boxes model in the Poisson-lines scaling regime are given in Figure~\ref{fig:lowpoisson}.
Besides points, we spot horizontal lines in the sample on the left hand side.
In the sample on the right hand side, each box is just additionally randomly rotated (cf.\ Section~\ref{sec:statprop} below for the definition of this modified model). 

\subsubsection{Points scaling regime}
In the points scaling regime where \mbox{$\lambda_\rho \rho^{\gamma_1} \to 0$}, we investigate the scaling behaviour of $\widetilde{J}_\rho$ on the space of signed measures $\mathcal{M}_{P}$ which is given as follows:

\begin{defi}\label{def:M_lowpoint}
Let $\mathcal{M}_{P}$ be the subset of $\mathcal{M}_2$ where
\begin{itemize}
 \item each signed measure $\mu \in \mathcal{M}_{P}$ has a \emph{continuous} density function $f_\mu$, i.e., $\mu(\diff x) = f_\mu(x) \diff x$;
 \item for each $\mu \in \mathcal{M}_{P}$ the density function $f_\mu$ is bounded and decays at least exponentially fast, i.e., there exist constants $C_\mu>0$ and $c_\mu>0$ such that for all $x \in \R^2$
 $$
 \vert f_\mu(x) \vert \leq C_\mu e^{-c_\mu(\vert x_1 \vert + \vert x_2 \vert)}.
 $$
\end{itemize}
\end{defi}

The limiting random field consists of integrals with respect to an $\alpha$-stable random measure.
For $\alpha \in (1,2)$, we denote by $\Lambda_\alpha$ the independently scattered $\alpha$-stable random measure with unit skewness and Lebesgue control measure (cf., e.g., \cite{samorodnitsky2000stable}).
We define the random linear functional
\begin{equation}\label{eq:def_stable_int}
 S_{\gamma_1}(\mu) \coloneqq \int_{\R^2} f_\mu(x) \Lambda_{\gamma_1}(\diff x), \quad \mu \in \mathcal{M}_{P},
\end{equation}
by its characteristic function at $1$
$$
 \E \left(e^{i S_{\gamma_1}(\mu) }\right)=  \exp \left( -\sigma_\mu^{\gamma_1} \left(1-i \beta_\mu \tan \left( \frac{\pi \gamma_1}{2} \right) \right) \right),
$$
where
\begin{equation}\label{eq:sigma_mu}
 \sigma_\mu=\Vert f_\mu \Vert_{\gamma_1}, \quad \beta_\mu= \Vert f_\mu \Vert_{\gamma_1}^{-\gamma_1} \left(\Vert {f_\mu}_+ \Vert_{\gamma_1}^{\gamma_1} - \Vert {f_\mu}_- \Vert_{\gamma_1}^{\gamma_1} \right)
\end{equation}
and ${f_\mu}_+ \coloneqq \max\left({f_\mu},0 \right)$, ${f_\mu}_- \coloneqq - \min\left({f_\mu},0\right)$.

\begin{thm}\label{thm:lowpoint}
Let $\gamma_1 \in (1,2)$, $\gamma_1 < \gamma_2$, $\lambda_\rho \to \infty$ and $\lambda_\rho \rho^{\gamma_1} \to 0$ as $\rho \to 0$.
Then, we have
$$
\frac{\widetilde{J}_\rho(\cdot) }{c_{\gamma_1,\gamma_2} \lambda_\rho^{{1}/{\gamma_1}} \rho^2} \xrightarrow{\mathcal{M}_{P}} S_{\gamma_1}(\cdot)
$$
as $\rho \to 0$, where the linear random field of functionals $(S_{\gamma_1}(\mu))_\mu$ and the constant $c_{\gamma_1,\gamma_2}$ are defined in~\eqref{eq:def_stable_int} and~\eqref{eq:c1c2_gamma_stable} below, respectively.
\end{thm}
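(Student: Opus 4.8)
The plan is to prove convergence of the finite-dimensional distributions via characteristic functions. Since $\mathcal{M}_{P}$ is a linear space (continuity, boundedness and exponential decay of the densities are preserved under finite linear combinations) and both $\mu \mapsto \widetilde{J}_\rho(\mu)$ and $\mu \mapsto S_{\gamma_1}(\mu)$ are linear (the latter being a stochastic integral against $\Lambda_{\gamma_1}$), the Cram\'er--Wold device reduces the whole statement to proving, for an arbitrary fixed $\mu \in \mathcal{M}_{P}$, that $\E \exp\!\big(i \widetilde{J}_\rho(\mu)/a_\rho\big) \to \E \exp\!\big(i S_{\gamma_1}(\mu)\big)$, where I abbreviate $a_\rho \coloneqq c_{\gamma_1,\gamma_2}\lambda_\rho^{1/\gamma_1}\rho^2$ and $g(y) \coloneqq e^{iy}-1-iy$. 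As $\widetilde{J}_\rho(\mu)$ is a centred square-integrable compensated Poisson integral (its second moment being finite by the preliminaries), the L\'evy--Khintchine formula yields
\[
 \log \E \exp\!\big(i \widetilde{J}_\rho(\mu)/a_\rho\big) = \lambda_\rho \int_{\R^2} \int_{\R_+^2} g\!\left(\frac{\mu(B(x,\rho u))}{a_\rho}\right) f_1(u_1) f_2(u_2)\, \diff u\, \diff x =: \psi_\rho .
\]

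The next step is the change of variables $u_1 = \lambda_\rho^{1/\gamma_1} w_1$ in the length coordinate. Put $\delta_\rho \coloneqq \rho \lambda_\rho^{1/\gamma_1} = \big(\rho^{\gamma_1}\lambda_\rho\big)^{1/\gamma_1}$, which tends to $0$ \emph{precisely} because we are in the points regime $\lambda_\rho\rho^{\gamma_1}\to 0$; for the box $B(x,s_\rho)$ with $s_\rho=(\delta_\rho w_1,\rho u_2)$ one has the exact identity $s_{\rho,1}s_{\rho,2}/a_\rho = w_1 u_2/c_{\gamma_1,\gamma_2}$. After the substitution,
\[
 \psi_\rho = \int_{\R_+}\!\int_{\R_+} \lambda_\rho^{1+1/\gamma_1}\, f_1\!\big(\lambda_\rho^{1/\gamma_1} w_1\big)\, f_2(u_2)\, G_\rho(w_1,u_2)\, \diff w_1\, \diff u_2, \qquad G_\rho(w_1,u_2) \coloneqq \int_{\R^2} g\!\left(\frac{\mu(B(x,s_\rho))}{a_\rho}\right)\diff x .
\]
I would then pass to the limit by dominated convergence in two layers. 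Inner layer: for fixed $(w_1,u_2)$ the box $B(x,s_\rho)$ shrinks to the point $x$, so by continuity of $f_\mu$ one gets $\mu(B(x,s_\rho))/(s_{\rho,1}s_{\rho,2}) \to f_\mu(x)$ and hence $g\big(\mu(B(x,s_\rho))/a_\rho\big)\to g\big(w_1u_2 f_\mu(x)/c_{\gamma_1,\gamma_2}\big)$; using $|g(y)|\le 2|y|^{\gamma_1}$ (valid for $1\le\gamma_1\le 2$) the integrand of $G_\rho$ is bounded, for all small $\rho$, by $2\,(w_1u_2/c_{\gamma_1,\gamma_2})^{\gamma_1}\big(\sup_{|y_i-x_i|\le 1/2}|f_\mu(y)|\big)^{\gamma_1}$, which is $x$-integrable by the exponential decay of $f_\mu$ (Definition~\ref{def:M_lowpoint}); hence $G_\rho(w_1,u_2)\to G_0(w_1,u_2)\coloneqq\int_{\R^2} g\big(w_1u_2 f_\mu(x)/c_{\gamma_1,\gamma_2}\big)\,\diff x$. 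Outer layer: $\lambda_\rho^{1+1/\gamma_1} f_1(\lambda_\rho^{1/\gamma_1} w_1)\to w_1^{-\gamma_1-1}$ by the tail asymptotics $f_1(t)\sim t^{-\gamma_1-1}$ together with $\lambda_\rho^{1/\gamma_1}w_1\to\infty$ (here $\lambda_\rho\to\infty$ is used).

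The step I expect to be the main obstacle is the construction of a $\rho$-uniform integrable majorant for the outer layer, together with the nested domination inside $G_\rho$. The technical hypothesis~\eqref{eq:addrequirelow} is essential here: it gives $\lambda_\rho^{1+1/\gamma_1} f_1(\lambda_\rho^{1/\gamma_1} w_1) \le c_{f_1}\, w_1^{-\gamma_1-1}$ for all $\rho$. Using $|g(y)|\le\min(y^2/2,\,2|y|)$ and $\int\min(p,q)\le\min(\int p,\int q)$ together with the elementary Cauchy--Schwarz/Fubini estimates
\[
 \int_{\R^2}\mu(B(x,s))^2\,\diff x \le (s_1 s_2)^2\Vert f_\mu\Vert_2^2, \qquad \int_{\R^2}|\mu(B(x,s))|\,\diff x \le s_1 s_2 \Vert f_\mu\Vert_1
\]
(both finite since $f_\mu$ is bounded and exponentially decaying) one obtains $|G_\rho(w_1,u_2)|\le h(w_1,u_2)\coloneqq\min\!\big(C_1 w_1^2 u_2^2,\, C_2 w_1 u_2\big)$ with $C_1,C_2$ independent of $\rho$. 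One then checks $\int_{\R_+^2} w_1^{-\gamma_1-1} f_2(u_2)\, h(w_1,u_2)\,\diff w_1\,\diff u_2<\infty$: splitting the domain along $w_1 u_2 = C_2/C_1$ and integrating over $w_1$ first, each of the two pieces equals a constant multiple of $\int_{\R_+} u_2^{\gamma_1} f_2(u_2)\,\diff u_2$, which is finite because $\gamma_2>\gamma_1$ (convergence at $u_2=\infty$) and $\int_{\R_+}u_2 f_2(u_2)\,\diff u_2<\infty$ with $\gamma_1>1$ (convergence at $u_2=0$); and the inner $w_1$-integrals converge exactly because $\gamma_1<2$ (at $w_1=0$) and $\gamma_1>1$ (at $w_1=\infty$). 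Dominated convergence now gives $\psi_\rho\to\int_{\R_+^2} w_1^{-\gamma_1-1} f_2(u_2)\, G_0(w_1,u_2)\,\diff w_1\,\diff u_2$.

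Finally I would identify this limit with $\log\E\exp(iS_{\gamma_1}(\mu))$. By Fubini (justified by the same majorant) the limit equals $\int_{\R^2}\int_{\R_+}\int_{\R_+} g\big(w_1 u_2 f_\mu(x)/c_{\gamma_1,\gamma_2}\big)\, w_1^{-\gamma_1-1}\,\diff w_1\, f_2(u_2)\,\diff u_2\,\diff x$. Doing the $w_1$-integral via the classical identity $\int_{0}^{\infty} g(bw)\,w^{-\gamma_1-1}\,\diff w = K_{\gamma_1} |b|^{\gamma_1}\big(1 - i\,\mathrm{sgn}(b)\tan(\pi\gamma_1/2)\big)$, where $K_{\gamma_1} = \Gamma(-\gamma_1)\cos(\pi\gamma_1/2)<0$ for $\gamma_1\in(1,2)$, then the $u_2$-integral (which produces $\int_{\R_+}u_2^{\gamma_1}f_2(u_2)\,\diff u_2$), and then the $x$-integral (which produces $\int_{\R^2}|f_\mu|^{\gamma_1}=\sigma_\mu^{\gamma_1}$ and $\int_{\R^2}|f_\mu|^{\gamma_1}\mathrm{sgn}(f_\mu)=\Vert {f_\mu}_+\Vert_{\gamma_1}^{\gamma_1}-\Vert {f_\mu}_-\Vert_{\gamma_1}^{\gamma_1}=\beta_\mu\sigma_\mu^{\gamma_1}$, cf.~\eqref{eq:sigma_mu}), the limit takes the form $K_{\gamma_1}\,c_{\gamma_1,\gamma_2}^{-\gamma_1}\big(\int_{\R_+}u_2^{\gamma_1}f_2(u_2)\,\diff u_2\big)\,\sigma_\mu^{\gamma_1}\big(1 - i\beta_\mu\tan(\pi\gamma_1/2)\big)$. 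With $c_{\gamma_1,\gamma_2}\coloneqq\big(-K_{\gamma_1}\int_{\R_+}u_2^{\gamma_1}f_2(u_2)\,\diff u_2\big)^{1/\gamma_1}$ — which is the content of~\eqref{eq:c1c2_gamma_stable} — the prefactor equals $-1$, so the limit reduces to $-\sigma_\mu^{\gamma_1}\big(1-i\beta_\mu\tan(\pi\gamma_1/2)\big)=\log\E\exp(iS_{\gamma_1}(\mu))$, cf.~\eqref{eq:def_stable_int}. This establishes $\psi_\rho\to\log\E\exp(iS_{\gamma_1}(\mu))$, hence the claimed convergence of the characteristic functions, and completes the proof.
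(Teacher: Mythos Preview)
Your argument is correct and follows the same overall route as the paper: reduce via Cram\'er--Wold to one-dimensional characteristic functions, rewrite the log-characteristic function after a substitution that pulls out the factor $\lambda_\rho^{1+1/\gamma_1} f_1(\lambda_\rho^{1/\gamma_1}\,\cdot\,)$, apply dominated convergence, and identify the limit via the classical evaluation of $\int_0^\infty \Psi(bw)\,w^{-\gamma_1-1}\,\diff w$. One small slip: your parenthetical ``its second moment being finite by the preliminaries'' is false here, because $\gamma_1<2$ forces $\int u_1^2 f_1(u_1)\,\diff u_1=\infty$ and hence $\E\widetilde{J}_\rho(\mu)^2=\infty$; but this is harmless, since the L\'evy--Khintchine formula for a compensated Poisson integral needs only $\int\min(|\mu(B)|,\mu(B)^2)\,\diff n_\rho<\infty$, which holds by the $L^1$ bound in the preliminaries.

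The one genuine difference is your majorant. The paper bounds the local average $m_\mu(x,\cdot)$ by the maximal function $m_\mu^*(x)$ and invokes a separate lemma showing $m_\mu^*\in L^\beta(\R^2)$ for all $\beta>1$ (this is where the exponential-decay assumption on $f_\mu$ is exploited), then uses the two-sided estimate $\min(|v|,v^2)\le\min(|v|^{\gamma_1-\eps},|v|^{\gamma_1+\eps})$ to get an integrable product majorant in $(x,u_1,u_2)$. Your route is more elementary: you never introduce $m_\mu^*$, instead bounding $|G_\rho|$ directly by $\min(C_1w_1^2u_2^2,\,C_2w_1u_2)$ via the Cauchy--Schwarz/Fubini estimates $\int\mu(B(x,s))^2\,\diff x\le(s_1s_2)^2\Vert f_\mu\Vert_2^2$ and $\int|\mu(B(x,s))|\,\diff x\le s_1s_2\Vert f_\mu\Vert_1$, and then checking that $w_1^{-\gamma_1-1}f_2(u_2)\min(C_1w_1^2u_2^2,C_2w_1u_2)$ integrates to a constant times $\int u_2^{\gamma_1}f_2(u_2)\,\diff u_2<\infty$. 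This is cleaner and sidesteps the maximal-function lemma; the paper's approach, by contrast, yields a pointwise-in-$x$ majorant that it can reuse in the other low-intensity sub-regimes. (Your comment on ``convergence at $u_2=0$'' is overcomplicated: $u_2^{\gamma_1}\le 1$ for $u_2\le 1$ and $f_2$ is a probability density, so integrability near $0$ is automatic.)
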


We emphasise that the `heavier' tail index $\gamma_1$ for the length of a box appears primarily in the limit, i.e., the `lighter' tail index $\gamma_2$ only enters into a constant.
More precisely, the limit $S_{\gamma_1}(\mu)$ is a $\gamma_1$-stable random variable and the constant $c_{\gamma_1,\gamma_2}$ given in~\eqref{eq:c1c2_gamma_stable} below is the only quantity depending on the tail index~$\gamma_2$.
This contrasts the limits in the high and intermediate intensity regimes, where both parameters $\gamma_1$ and $\gamma_2$ are present in a homogeneous way in each limit. 

\subsection{The finite variance case}\label{sec:finitevar}
Finally, we want to investigate the scaling behaviour in the case where the area of a box has a finite variance. We assume that the length and the width of the boxes have finite second moments instead of heavy tails.
Similar to above, let $F$ be a probability measure on $\R_+^2$ given by 
\begin{equation*}\label{eq:Fdu_finite_variance}
 F(\diff u)=f_1(u_1)f_2(u_2) \diff u_1 \diff u_2.
\end{equation*}
Furthermore, we define for $i=1,2$
\begin{equation}\label{eq:v_i}
 v_i \coloneqq \int_{\R_+} u_i^2 f_i(u_i) \diff u_i < \infty.
\end{equation}

The following result shows that the centred and renormalised random field on the space $\mathcal{M}_{P}$ converges to a centred Gaussian linear random field.
We emphasise that there does \emph{not} exist a diversity of regimes to distinguish in the finite variance case, which is also the much simpler case.
Nevertheless, the proof of this result can be viewed as a `prototype proof' for all other regimes.

\begin{thm}\label{thm:finite_variance}
Let $\lambda_\rho \to \infty$ as $\rho \to 0$.
Then, we have
$$
\frac{\widetilde{J}_\rho(\cdot)}{\rho^2 \sqrt{ \lambda_\rho v_1 v_2}} \xrightarrow{\mathcal{M}_{P}} X(\cdot)
$$
as $\rho \to 0$, where $v_i$ is defined in \eqref{eq:v_i} for $i=1,2$ and where $( X(\mu))_\mu$ is the centred Gaussian linear random field with covariance function
\begin{equation}\label{eq:cov_thm_fin_var}
 C_X(\mu,\nu)= \int_{\R^2} f_\mu(x) f_\nu(x) \diff x. 
\end{equation}
\end{thm}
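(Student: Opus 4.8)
The plan is to prove convergence of the finite-dimensional distributions by the standard route for linear functionals of Poisson random measures: compute the log-characteristic function of an arbitrary linear combination $\sum_{k=1}^m \theta_k \widetilde{J}_\rho(\mu_k) = \widetilde{J}_\rho(\mu)$ (where $\mu \coloneqq \sum_k \theta_k \mu_k \in \mathcal{M}_P$, using linearity of $\mu \mapsto \mu(B(x,u))$ and of the space $\mathcal{M}_P$), and show it converges to $-\tfrac12 C_X(\mu,\mu)$, the log-characteristic function of the centred Gaussian field evaluated at $\mu$. By the Lévy–Khintchine formula for Poisson integrals, for the scaled model one has
\[
 \log \E \exp\!\left( i \tfrac{\widetilde{J}_\rho(\mu)}{\rho^2\sqrt{\lambda_\rho v_1 v_2}} \right)
 = \lambda_\rho \int_{\R^2 \times \R_+^2} \left( e^{i t_\rho \mu(B(x,u))} - 1 - i t_\rho \mu(B(x,u)) \right) \diff x \, F_\rho(\diff u),
\]
with $t_\rho \coloneqq (\rho^2\sqrt{\lambda_\rho v_1 v_2})^{-1}$. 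Substituting the scaling $F_\rho = $ image of $F$ under $u \mapsto \rho u$ turns this into $\lambda_\rho \int (e^{i t_\rho \mu(B(x,\rho u))} - 1 - i t_\rho \mu(B(x,\rho u))) \diff x\, F(\diff u)$, and a change of variables $x \mapsto \rho x$ (the measure $\mu$ having a density, $\mu(B(\rho x,\rho u)) = \rho^2 \int_{B(x,u)} f_\mu(\rho y)\diff y$) is the natural normalization to track.

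The core estimate is a second-order Taylor expansion: using $|e^{is} - 1 - is| \le \tfrac12 s^2$, and that $s_\rho(x,u) \coloneqq t_\rho \mu(B(x,\rho u))$ tends to $0$ uniformly (here the boundedness and exponential decay of $f_\mu$ from Definition \ref{def:M_lowpoint}, together with $\lambda_\rho \to \infty$, are used — note $|\mu(B(x,\rho u))| \le \rho^2 u_1 u_2 \|f_\mu\|_\infty$, so $|s_\rho| \le u_1 u_2 \|f_\mu\|_\infty / \sqrt{\lambda_\rho v_1 v_2} \to 0$), one writes $e^{is_\rho}-1-is_\rho = -\tfrac12 s_\rho^2 + o(s_\rho^2)$ and the leading term is
\[
 -\tfrac{\lambda_\rho t_\rho^2}{2} \int_{\R^2 \times \R_+^2} \mu(B(x,\rho u))^2 \diff x \, F(\diff u)
 = -\frac{1}{2 v_1 v_2} \int_{\R^2 \times \R_+^2} \frac{\mu(B(x,\rho u))^2}{\rho^4} \diff x \, F(\diff u).
\]
Then I would show pointwise in $(x,u)$ that $\rho^{-2}\mu(B(x,\rho u)) = \rho^{-2}\int_{B(x,\rho u)} f_\mu(y)\diff y \to u_1 u_2 f_\mu(x)$ as $\rho \to 0$ — this is exactly where \emph{continuity} of $f_\mu$ enters, since $B(x,\rho u)$ shrinks to the point $x$ and the average of $f_\mu$ over it converges to $f_\mu(x)$. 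Integrating the square, $\int \mu(B(x,\rho u))^2\rho^{-4}\,\diff x\,F(\diff u) \to \int u_1^2 u_2^2 f_\mu(x)^2 \diff x \, F(\diff u) = v_1 v_2 \int f_\mu(x)^2 \diff x = v_1 v_2\, C_X(\mu,\mu)$, by Fubini and the definitions \eqref{eq:v_i}. Dividing by $v_1 v_2$ gives the claimed limit $-\tfrac12 C_X(\mu,\mu)$, and since $C_X$ is bilinear, polarization recovers the full covariance structure \eqref{eq:cov_thm_fin_var} for the Gaussian limit.

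The main obstacle is justifying the passage to the limit under the integral sign — both discarding the $o(s_\rho^2)$ remainder and the convergence of $\int \mu(B(x,\rho u))^2\rho^{-4}$ — uniformly enough to apply dominated convergence. For this I need an integrable majorant on $\R^2 \times \R_+^2$ independent of $\rho$: in the $x$-variable, $|\rho^{-2}\mu(B(x,\rho u))|^2 \le u_1^2 u_2^2 \sup_{y \in B(x,\rho u)} f_\mu(y)^2$, and the exponential decay \eqref{eq:decay_mu}-type bound controls this by (a constant times) $u_1^2 u_2^2 e^{-c_\mu(|x_1|+|x_2|)}$ for $\rho \le 1$ (after absorbing the $O(\rho u)$ shift of the box into the exponential, which is where one must be slightly careful for large $u$); in the $u$-variable this is integrable against $F(\diff u)$ precisely because $v_1, v_2 < \infty$. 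The remainder term is handled by a $3$-term Taylor bound $|e^{is}-1-is| \le \min(s^2, |s|^3/6 \cdot \mathbf{1} + \dots)$ or by splitting the domain into $|s_\rho|\le\delta$ and $|s_\rho|>\delta$; on the small part use $|e^{is}-1-is+\tfrac12 s^2|\le |s|^3/6$, on the large part the second-moment integrability plus $s_\rho \to 0$ uniformly forces vanishing. Once dominated convergence applies, the rest is the routine computation sketched above. I would also remark that this argument is the template referred to in the text, the heavy-tailed cases differing only in the choice of normalization $n_\rho$ and in that the Taylor remainder no longer vanishes but converges to the Lévy exponent of a Poisson or stable integral.
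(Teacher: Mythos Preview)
Your overall route is the same as the paper's: reduce to one-dimensional characteristic functions via Cram\'er--Wold, write the log-characteristic function as a Poisson integral of $\Psi(s_\rho)$, substitute $u\mapsto\rho u$ so that the integration is against $F(\diff u)$, identify the pointwise limit of the integrand using continuity of $f_\mu$, and close with dominated convergence. The paper does exactly this, with the local average $m_\mu(x,\rho u)=\rho^{-2}(u_1u_2)^{-1}\mu(B(x,\rho u))$ playing the role of your $\rho^{-2}\mu(B(x,\rho u))/(u_1u_2)$.

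The one place where your argument does not go through as written is the majorant. You bound $\rho^{-2}|\mu(B(x,\rho u))|\le u_1u_2\sup_{y\in B(x,\rho u)}|f_\mu(y)|$ and then try to absorb the box shift into the exponential to get $Cu_1^2u_2^2 e^{-c_\mu(|x_1|+|x_2|)}$. This fails for large $u$: the supremum over $B(x,\rho u)$ (or over $B(x,u)$ after taking $\rho\le 1$) is only controlled by $C_\mu e^{-c_\mu(|x_1|-u_1/2)_+}e^{-c_\mu(|x_2|-u_2/2)_+}$, and integrating this squared over $x$ produces an extra factor of order $(u_1+C)(u_2+C)$. Your majorant in $u$ then becomes $u_1^2u_2^2(u_1+C)(u_2+C)f_1(u_1)f_2(u_2)$, whose integrability needs \emph{third} moments of the edge lengths, which are not assumed. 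Your remark ``where one must be slightly careful for large $u$'' is understating a genuine obstruction.

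The paper's fix is to replace the pointwise supremum by the supremum of \emph{averages}: define the maximal function $m_\mu^*(x)=\sup_{u\in\R_+^2}(u_1u_2)^{-1}\int_{B(x,u)}|f_\mu(y)|\,\diff y$, so that $\rho^{-2}|\mu(B(x,\rho u))|\le u_1u_2\,m_\mu^*(x)$ uniformly in $\rho$ and $u$. Under the exponential-decay hypothesis on $f_\mu$ one shows (Lemma~\ref{lem:m_mu}(ii)) that $m_\mu^*(x)\le C\prod_i\min(1,2/(c_\mu|x_i|))$, hence $m_\mu^*\in L^2(\R^2)$. The majorant $\tfrac{1}{2v_1v_2}u_1^2u_2^2\,m_\mu^*(x)^2$ is then integrable against $\diff x\,F(\diff u)$ using only the finite second moments $v_1,v_2$, and dominated convergence applies directly to $\lambda_\rho\Psi(s_\rho)$ without splitting off a remainder.
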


\begin{rem}
We note that two limiting random fields in this paper have already arisen in identical form in related work.
The centred Gaussian linear random field with covariance function given in~\eqref{eq:cov_thm_fin_var} coincides with the corresponding one in the finite variance case of the random grain model where the size of a grain is given by a \emph{single} distribution (cf.\ Theorem~1 in~\cite{kaj2007}).
Moreover, the limiting random field consisting of integrals with respect to a stable random measure in the points scaling regime has also appeared there (cf.\ (13) in~\cite{kaj2007}).
The index of stability is given by the index of the regularly varying tail of the volume of a grain there and by the `heavier' tail index~$\gamma_1$ for the length of a box in our random boxes model.
All other limiting random fields seem to be new.
\end{rem}

\subsection{Statistical properties and extensions of the model}\label{sec:statprop}
In the following paragraphs, we give some statistical properties of the different scaling limits $Z$, $J_I$, $Y$, $J_L$, $S_{\gamma_1}$ and $X$.
We will omit the proofs of these facts because they can be verified easily.

\paragraph{Covariance.}
The covariance functions of the Gaussian random fields $Z$, $Y$ and $X$ are given in~\eqref{eq:cov_high_gaussian}, \eqref{eq:cov_low_gaussian} and \eqref{eq:cov_thm_fin_var}, respectively.
The covariance function of $J_I$ in the intermediate intensity regime is exactly the same as in the high intensity regime (see~\eqref{eq:cov_high_gaussian}), but the limit $J_I$ is not a Gaussian random field.
In the points scaling regime, the scaling limit $S_{\gamma_1}(\mu)$ is $\gamma_1$-stable and thus does not have a finite variance.
We distinguish two cases in the Poisson-lines scaling regime:
If $\gamma_2 < 2$, the compensated Poisson integral $J_L(\mu)$ does not have a finite variance.
In contrast, if we assume $\gamma_2 > 2$, i.e., the width of a box has a finite variance, the scaling limit $J_L(\mu)$ has a finite variance as well and the covariance function coincides with the one in the Gaussian-lines scaling regime (see~\eqref{eq:cov_low_gaussian}).

\paragraph{Translation invariance.}
Let $s \in \R^2$. We define the translation of a signed measure $\tau_s \mu$ by $\tau_s \mu (A) \coloneqq \mu(A-s)$ for any Borel set $A$.
We call a random field $W$ on $\mathcal{M}_W$ translation invariant if we have
$$
(W(\tau_s \mu))_{\mu \in \mathcal{M}_W} = (W(\mu))_{\mu \in \mathcal{M}_W}
$$
in finite-dimensional distributions for all $s \in \R^2$ ($\mathcal{M}_W$ has to be closed under translations $\tau_s$).
All limiting random fields $Z$, $J_I$, $Y$, $J_L$, $S_{\gamma_1}$ and $X$ are translation invariant on the respective spaces of signed measures.

\paragraph{Dilation.}
For all $a >0$ the dilation of a signed measure $\mu_a$ is given by $\mu_a (A) \coloneqq \mu(a^{-1}A)$ for any Borel set $A$.
We call a random field $W$ on $\mathcal{M}_W$ self-similar with index $H$ if we have
$$
(W(\mu_a))_{\mu \in \mathcal{M}_W} = (a^H W(\mu))_{\mu \in \mathcal{M}_W}
$$
in finite-dimensional distributions for all $a >0$ ($\mathcal{M}_W$ has to be closed under dilations $\mu_a$). 

The limiting Gaussian random fields $Z$, $Y$ and $X$ are self-similar with index $H=(2-\gamma_1 - \gamma_2)/{2}$, $H=-\gamma_1/{2}$ and $H=-1$, respectively.
In the points scaling regime, the limit $S_{\gamma_1}$ is self-similar with index $H=2 / \gamma_1 - 2$.
We emphasise that $H$ is negative in these cases.
If the reader expects $H$ to be positive, a reason may be found in the way of defining the dilation of a signed measure which, however, is common in literature.
One can also verify that the random field $J_I$ in the intermediate intensity regime is not self-similar (cf.\ \cite[p.\ 537]{kaj2007}).

One calls a random field $W$ with $\E W=0$ on $\mathcal{M}_W$ (which has to be again closed under dilation) aggregate-similar (cf.\ \cite{Bierme2010,Kaj_aggregate}) if there is a positive sequence $(a_m)_{m\geq 1}$ such that we have
$$
(W(\mu_{a_m}))_{\mu \in \mathcal{M}_W} = \left(\sum_{k=1}^m W^k(\mu)\right)_{\mu \in \mathcal{M}_W}
$$
in finite-dimensional distributions for all $m \geq 1$, where $(W^k)_{k \geq 1}$ are i.i.d.\ copies of $W$.

We obtain that the random fields $Z$, $Y$, $X$, $J_I$ and $S_{\gamma_1}$ are aggregate-similar with $a_m=m^{1/(2-\gamma_1-\gamma_2)}$, $a_m=m^{-1/\gamma_1}$, $a_m=m^{-1/2}$, $a_m=m^{1/(2-\gamma_1-\gamma_2)}$ and $a_m=m^{1/(2-2\gamma_1)}$, respectively.
Regarding the dilation in the Poisson-lines scaling regime, we mention that the scaling limit $J_L$ only fulfils a modification of aggregate-similarity, where the measure for the width is dilated simultaneously.

\vspace{\baselineskip}
Next, we sketch feasible extensions of our random boxes model.
For example, it is possible to allow non-negative $\sigma$-finite measures $F$ instead of restricting ourselves to probability measures or to consider boxes (hyper-rectangles) in~$\R^d$ with $d \geq 3$.
Moreover, the model can be extended as follows:

\paragraph{Randomly rotated boxes.}
A modification of the random boxes model consists in additionally endowing the rectangles with independent and uniformly distributed orientations.
We introduce the Haar measure $\diff \theta$ on the group of rotations $SO(2)$ in $\R^2$ and consider the Poisson random measure~$N^\circ_\rho$ on $\R^2 \times \R_+^2 \times SO(2)$
with intensity measure given by
$$
n^\circ_\rho(\diff x, \diff u,\diff \theta) = \lambda_\rho \diff x F_\rho(\diff u) \diff \theta.
$$
Then, the centred Poisson integral
$$
\widetilde{J}^\circ_\rho(\mu) \coloneqq \int_{\R^2 \times \R_+^2 \times SO(2)} \mu(B_{\theta}(x,u)) \widetilde{N}^\circ_\rho(\diff x,\diff u, \diff \theta)
$$
is the object of interest, where $B_{\theta}(0,u) \coloneqq \theta B(0,u)$ denotes the rectangle~$B(0,u)$ rotated by $\theta$ and $B_{\theta}(x,u)$ for $x \neq 0$ is defined by
$$
B_{\theta}(x,u) \coloneqq x + B_{\theta}(0,u).
$$

Since the probability measure $\diff \theta$ on the group $SO(2)$ is not affected by the scaling as $\rho \to 0$, one can proceed as in the proofs of Theorems~\ref{thm:high}, \ref{thm:inter}, \ref{thm:lowgaus}, \ref{thm:lowpois}, \ref{thm:lowpoint} and \ref{thm:finite_variance}.
One just has to change the spaces of signed measures slightly in order to obtain analogous (rotation invariant) limiting random fields for this modified random boxes model.
To keep the exposition comprehensible, we will not enter into more details in this paper.

\section{Preliminaries and technical tools}\label{sec:preliminaries}
First, we define the function $\Psi$ by
\begin{equation}\label{def:psi}
 \Psi(v) \coloneqq e^{iv}-1-iv, \quad \text{for $v \in \R$,}
\end{equation}
which we often require in order to represent characteristic functions.
Moreover, note that we use $c$ and $C$ from now on for constants which can differ from line to line as well as within a line.

\subsection{Spaces of signed measures}
We investigate the spaces of signed measures where the theorems of convergence in the high, intermediate and low intensity regimes hold, respectively.
The following proposition, which one can prove easily, ensures the linearity of these subspaces.

\begin{pro}
The subsets $\mathcal{M}^{\gamma_1,\gamma_2}$, $\mathcal{M}_{L}$ and $\mathcal{M}_{P}$ are linear subspaces of~$\mathcal{M}_2$.
\end{pro}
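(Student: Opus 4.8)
The plan is to show that each of the three subsets is closed under addition and scalar multiplication; since they are all subsets of the linear space $\mathcal{M}_2$, and the zero measure trivially lies in each, this suffices. Closure under scalar multiplication is immediate in every case: if $\mu$ satisfies one of the defining bounds with constants $C,C_\mu,c_\mu$, then $t\mu$ satisfies the same type of bound with $C$ replaced by $t^2 C$ (for the quadratic bound \eqref{eq:phibound}) or $C_\mu$ replaced by $|t|C_\mu$ (for the exponential decay \eqref{eq:decay_mu}), while the exponents $\alpha_i$, the rate $c_\mu$, and the pointwise-convergence property \eqref{eq:low_pointwiseconv} are unaffected because $f_{t\mu}=t f_\mu$. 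Likewise a scalar multiple of a (continuous) density is again a (continuous) density, so membership in $\mathcal{M}_{L}$ and $\mathcal{M}_{P}$ is preserved.

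For closure under addition in $\mathcal{M}^{\gamma_1,\gamma_2}$, I would take $\mu,\nu\in\mathcal{M}^{\gamma_1,\gamma_2}$ with associated constants $C^\mu,\alpha_i^\mu$ and $C^\nu,\alpha_i^\nu$. Using $(a+b)^2\le 2a^2+2b^2$ pointwise inside the integral, one gets
$$
\int_{\R^2}(\mu+\nu)(B(x,u))^2\,\diff x \le 2\int_{\R^2}\mu(B(x,u))^2\,\diff x + 2\int_{\R^2}\nu(B(x,u))^2\,\diff x,
$$
so it remains to bound a sum of two terms of the form $C\min(u_i,u_i^{\alpha_i})$ by a single such term. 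Setting $\alpha_i\coloneqq\min(\alpha_i^\mu,\alpha_i^\nu)$, which still lies in $(\gamma_i,2]$, one checks the elementary inequality $\min(u,u^{\alpha^\mu})\le \max(1,\text{const})\cdot\min(u,u^{\alpha})$ for $\alpha\le\alpha^\mu$, separately on $\{u\le 1\}$ (where $\min(u,u^\beta)=u$ for every $\beta\le1$, and for $\beta\in(1,2]$ one has $u^\beta\le u$, so in all cases $\min(u,u^\beta)=u$... more carefully: for $u\le1$, $\min(u,u^\beta)=u^{\max(1,\beta)}$) and on $\{u>1\}$ (where $\min(u,u^\beta)=u^{\min(1,\beta)}$); in each region $\min(u,u^{\alpha^\mu})\le\min(u,u^{\alpha})$ since decreasing the exponent only decreases $u^\beta$ when... — in short, a short case analysis in the two variables $u_1,u_2$ shows the product bound survives with $C\coloneqq 4\max(C^\mu,C^\nu)$ and the new exponents $\alpha_i$.

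For $\mathcal{M}_{L}$ and $\mathcal{M}_{P}$, additivity of densities gives $f_{\mu+\nu}=f_\mu+f_\nu$, which is bounded (resp. continuous) when both summands are, and satisfies $|f_{\mu+\nu}(x)|\le (C_\mu+C_\nu)e^{-\min(c_\mu,c_\nu)(|x_1|+|x_2|)}$, so \eqref{eq:decay_mu} holds with the obvious constants. The remaining point for $\mathcal{M}_{L}$ is the pointwise convergence \eqref{eq:low_pointwiseconv}: this is linear in $f_\mu$ on both sides, so it passes to sums directly. I do not expect a genuine obstacle anywhere; the only mildly delicate point is the bookkeeping with the exponents in \eqref{eq:phibound}, and even there the worst case is the elementary monotonicity fact that $u\mapsto\min(u,u^{\alpha})$ is, up to a universal constant, monotone in $\alpha\in(\gamma_i,2]$ on each of the regions $u\le1$ and $u>1$ — which is exactly why the proposition is stated as "one can prove easily."
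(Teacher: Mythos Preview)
Your proposal is correct. The paper itself omits the proof entirely (it merely states that ``one can prove easily'' the proposition), so there is no approach to compare against; your argument via $(a+b)^2\le 2a^2+2b^2$ together with taking $\alpha_i=\min(\alpha_i^\mu,\alpha_i^\nu)$ is exactly the natural route.

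One small remark on presentation: your case analysis for the exponent monotonicity is more hesitant than it needs to be. Since here $\alpha_i^\mu,\alpha_i^\nu>\gamma_i>1$, you always have $\min(u_i,u_i^{\beta})=u_i^{\beta}$ for $u_i\le1$ and $\min(u_i,u_i^{\beta})=u_i$ for $u_i>1$; hence for $\alpha\le\alpha^\mu$ one gets $\min(u_i,u_i^{\alpha^\mu})\le\min(u_i,u_i^{\alpha})$ on each region with no extra constant needed, and the final constant can be taken as $2(C^\mu+C^\nu)$ rather than $4\max(C^\mu,C^\nu)$. This cleans up the only passage in your write-up that reads as uncertain.
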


\begin{rem}
In Theorems~\ref{thm:high} and~\ref{thm:inter} in the high and intermediate intensity regimes, we additionally assume $\gamma_2<2$ instead of just $\gamma_2>\gamma_1$. The reason for that can be motivated in a natural way:
On the one hand, we have to require that there exists some $\alpha_2 > \gamma_2$ in Definition~\ref{def:M_high} in order to prove the theorems of convergence.
On the other hand, we want at least measures whose density functions have compact support to be contained in $\mathcal{M}^{\gamma_1,\gamma_2}$. As a consequence, $\alpha_2 \leq 2$ also has to be fulfilled.
Therefore, both inequalities can only be satisfied simultaneously for $\gamma_2<2$.
\end{rem}

\begin{rem}
We briefly comment on the characteristics of the spaces of signed measures in the low intensity sub-regimes (see Definitions~\ref{def:M_lowgaussian} and~\ref{def:M_lowpoint}).
The assumption that each signed measure has a density function is obviously necessary since the density function appears explicitly in the limiting random fields.
In contrast, we do not conjecture that the technical assumption on the decay of the density function in~\eqref{eq:decay_mu} is necessary as well.
Nevertheless, the reason for restricting the density functions to functions that decay at least exponentially fast is related to the maximal function of the signed measure given in \eqref{eq:m_mu^*} below.
We have to ensure that Lemma \ref{lem:m_mu}~(ii) below holds in order to prove the theorems of convergence.
\end{rem}

Next, we briefly touch on the comparison of these spaces of signed measures for $\gamma_i \in (1,2)$ for $i=1,2$.
We observe that the space $\mathcal{M}^{\gamma_1,\gamma_2}$ contains measures which do not have to have a density.
Therefore, there exist some $\mu \in \mathcal{M}^{\gamma_1,\gamma_2}$, but $\mu \notin  \mathcal{M}_{k}$ for $k \in \{L,P\}$.
Conversely, we obtain the following result:
\begin{pro}\label{lem:M_L_in_M_gamma}
Let $\gamma_i \in (1,2)$ for $i=1,2$. We have $\mathcal{M}_{k} \subseteq \mathcal{M}^{\gamma_1,\gamma_2}$
for $k \in \{L,P\}$.
\end{pro}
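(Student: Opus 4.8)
The plan is to verify the defining inequality~\eqref{eq:phibound} of $\mathcal{M}^{\gamma_1,\gamma_2}$ for an arbitrary $\mu \in \mathcal{M}_{k}$, $k \in \{L,P\}$, using only the properties shared by both $\mathcal{M}_L$ and $\mathcal{M}_P$: namely that $\mu(\diff x) = f_\mu(x)\diff x$ with $f_\mu$ bounded and exponentially decaying as in~\eqref{eq:decay_mu}. Fix such a $\mu$ and write $M \coloneqq \int_{\R^2}\mu(B(x,u))^2\diff x$. I would first choose the exponents: take $\alpha_1 = \alpha_2 = 2$ (or, if a strict inequality $\gamma_i < \alpha_i$ is wanted and $\gamma_i<2$, any $\alpha_i \in (\gamma_i, 2]$ works equally well since the bound with $\alpha_i = 2$ dominates), so that the target right-hand side becomes $C\min(u_1,u_1^2)\min(u_2,u_2^2)$.

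Next I would estimate $\mu(B(x,u))$ in the two complementary ways that produce the two factors in the $\min$. On the one hand, since $|f_\mu| \le C_\mu$, we have $|\mu(B(x,u))| \le C_\mu u_1 u_2$, giving $\mu(B(x,u))^2 \le C_\mu^2 u_1^2 u_2^2 |B(x,u)|/(u_1u_2) $... more directly: bound one factor of $\mu(B(x,u))$ by $C_\mu u_1 u_2$ and integrate the other factor, $\int_{\R^2}|\mu(B(x,u))|\diff x \le \int_{\R^2}\int_{B(x,u)}|f_\mu(y)|\diff y\,\diff x = u_1 u_2 \|f_\mu\|_{L^1}$, where $\|f_\mu\|_{L^1}<\infty$ by the exponential decay. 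This yields $M \le C_\mu \|f_\mu\|_{L^1}\, u_1^2 u_2^2$, the "$u_1^2u_2^2$" corner. On the other hand, writing $\mu(B(x,u)) = \int_{B(x,u)}f_\mu(y)\diff y$, expanding the square as a double integral, substituting and using Tonelli, one gets $M = \int_{\R^2}\Big(\int_{[-u_1/2,u_1/2]\times[-u_2/2,u_2/2]}\!\!\!\int f_\mu(x+s)f_\mu(x+t)\,\diff s\,\diff t\Big)\diff x$; bounding $|f_\mu(x+s)f_\mu(x+t)| \le \tfrac12(f_\mu(x+s)^2 + f_\mu(x+t)^2)$ and integrating $x$ out first gives $M \le u_1 u_2 \|f_\mu\|_{L^2}^2$, which is finite (again by boundedness plus exponential decay), the "$u_1u_2$" corner.

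It remains to interpolate these to get the mixed corners $u_1 u_2^2$ and $u_1^2 u_2$, i.e.\ to handle the case where one edge is short and the other long. This I would do by a partial version of the same double-integral computation: integrate out only the $y_1$-variables to bound the $x_1$-integral (using $\int_{\R}|f_\mu(y_1,x_2)|\diff y_1$ is bounded uniformly in $x_2$, again by exponential decay) and the $u_1$-length of the $s_1,t_1$ intervals, which produces a factor $u_1^2$, while for the $x_2$-direction one keeps the crude bound $\int_{\R}\big(\int_{\R}f_\mu\big)^2\diff x_2 \le C$ times $u_2$ — or symmetrically — so that altogether $M \le C\,u_1^{2}u_2$ and, symmetrically, $M \le C\, u_1 u_2^{2}$. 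Taking the minimum of the four bounds $u_1u_2,\ u_1^2u_2,\ u_1u_2^2,\ u_1^2u_2^2$ gives exactly $\min(u_1,u_1^2)\min(u_2,u_2^2)$ up to a constant $C$ depending only on $C_\mu, c_\mu$, which establishes~\eqref{eq:phibound} and hence $\mu \in \mathcal{M}^{\gamma_1,\gamma_2}$. The only mildly delicate point — the "main obstacle" — is keeping the mixed (anisotropic) cases clean: one must be careful to integrate out the correct one-dimensional variable first and to invoke the exponential decay in the right coordinate so that all the constants are genuinely finite and uniform in the surviving variable; everything else is routine Cauchy–Schwarz / Tonelli bookkeeping.
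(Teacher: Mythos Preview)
Your four-corners strategy is sound, but the argument for the ``$u_1 u_2$'' corner is wrong as written. After expanding the square you integrate over $(s,t)\in R\times R$ with $R=[-u_1/2,u_1/2]\times[-u_2/2,u_2/2]$; the AM--GM bound followed by integrating $x$ out gives $\|f_\mu\|_{L^2}^2$ for each $(s,t)$, and then the remaining $(s,t)$-integral contributes $|R|^2=(u_1u_2)^2$, not $u_1u_2$. So your computation actually reproduces the $u_1^2u_2^2$ bound, and in fact the claimed inequality $M\le u_1u_2\|f_\mu\|_{L^2}^2$ is false (test it with $f_\mu$ close to $\mathds{1}_{[-N,N]^2}$ and $u_1,u_2\gg N$). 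The fix is easy and in the spirit of your other corner: bound one factor by the total variation $|\mu(B(x,u))|\le\|f_\mu\|_{L^1}$ and integrate the other to get $M\le\|f_\mu\|_{L^1}^2\,u_1u_2$. The mixed corners then follow by the same ``bound one factor by $C u_i$, integrate the other'' trick, using that $\sup_{y_1}\int_{\R}|f_\mu(y_1,y_2)|\,\diff y_2<\infty$ from the exponential decay. With these repairs your proof goes through.

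The paper's route is different and shorter: it exploits the \emph{product form} of the exponential bound $|f_\mu(y)|\le C_\mu e^{-c_\mu|y_1|}e^{-c_\mu|y_2|}$ to factorise immediately,
\[
\int_{\R^2}\mu(B(x,u))^2\,\diff x \;\le\; C_\mu^2\prod_{i=1,2}\int_{\R}\Big(\int_{[x_i-u_i/2,\,x_i+u_i/2]}e^{-c_\mu|y_i|}\,\diff y_i\Big)^2\diff x_i,
\]
and then shows each one-dimensional factor is $\le c\min(u_i,u_i^2)$ by the case distinction $u_i\le 1$ versus $u_i>1$. This handles all four corners simultaneously and avoids the separate anisotropic bookkeeping that you flag as the main obstacle; your approach, once fixed, has the advantage of using only $L^1$ and $L^\infty$ information about $f_\mu$ (plus the marginal bound) rather than the specific exponential majorant, so it is slightly more robust in what it assumes about the density.
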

\begin{proof}[Sketch of proof]
Note that the density function of a signed measure in $\mathcal{M}_{k}$ for $k \in \{L,P\}$ satisfies
$$
\vert f_\mu(x) \vert \leq C_\mu e^{-c_\mu(\vert x_1 \vert + \vert x_2 \vert)}
$$
for all $x \in \R^2$ for some $C_\mu>0$ and $c_\mu>0$.
One can compute that
$$
\int_{\R} \left( \int_{\left[x_1-\frac{u_1}{2}, x_1+\frac{u_1}{2}\right]} e^{-c_\mu \vert y_1 \vert} \diff y_1 \right)^2 \diff x_1 \leq c \min\left(u_1,u_1^{2}\right)
$$
by a case distinction for $u_1 \leq 1$ and $u_1 > 1$.
Using the product form, the validity of inequality~\eqref{eq:phibound} follows.
\end{proof}

\subsection{Existence of the random fields}
We deal with the existence of the random field $\widetilde{J}$ of interest and all the limiting random fields in the different scaling regimes.
Using Lemma~12.13 in \cite{kallenberg2002foundations}, we can verify that the random fields $J$ and $\widetilde{J}$ exist because we have
$$
\int_{\R^2 \times \R_+^2} \vert \mu(B(x,u)) \vert n(\diff x,\diff u) \leq  \lambda \Vert \mu \Vert \int_{\R_+^2} u_1 u_2 F(\diff u)< \infty.
$$

Furthermore, by standard facts on Poisson integrals and Fubini's theorem, the expected value of $J(\mu)$ is finite and given by
$$
\E J(\mu) = \lambda  \mu (\R^2) \int_{\R_+} u_1 f_1(u_1) \diff u_1 \int_{\R_+} u_2 f_2(u_2) \diff u_2.
$$
Using the function $\Psi$ defined in~\eqref{def:psi}, the characteristic function of $\widetilde{J}(\mu)$ is given by
$$
\E \left(e^{i\widetilde{J}(\mu)}\right) = \exp \left( \int_{\R_+^2} \int_{\R^2} \Psi(\mu(B(x,u))) \lambda \diff x F(\diff u) \right).
$$

\begin{lem}\label{lem:intphifinite}
We have for all $\mu \in \mathcal{M}^{\gamma_1,\gamma_2}$
$$
\int_{\R^2 \times \R_+^2} \mu(B(x,u))^2 \frac{1}{u_1^{\gamma_1+1}} \frac{1}{u_2^{\gamma_2+1}} \diff (x,u) < \infty.
$$
\end{lem}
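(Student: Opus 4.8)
The plan is to carry out the $x$-integration first and then reduce to two elementary one-dimensional integrals in the variables $u_1$ and $u_2$. Since the integrand $\mu(B(x,u))^2 u_1^{-\gamma_1-1} u_2^{-\gamma_2-1}$ is non-negative and measurable (measurability of $x \mapsto \mu(B(x,u))$ follows from $\mu$ having finite total variation), Tonelli's theorem allows us to integrate in any order, so it suffices to bound
$$
\int_{\R_+^2} \left( \int_{\R^2} \mu(B(x,u))^2 \diff x \right) \frac{1}{u_1^{\gamma_1+1}} \frac{1}{u_2^{\gamma_2+1}} \diff u.
$$
Here is the one place where the membership $\mu \in \mathcal{M}^{\gamma_1,\gamma_2}$ enters: applying the defining bound~\eqref{eq:phibound}, with its constants $C>0$ and $\gamma_i < \alpha_i \leq 2$, the above is at most
$$
C \int_{\R_+} \frac{\min(u_1,u_1^{\alpha_1})}{u_1^{\gamma_1+1}} \diff u_1 \cdot \int_{\R_+} \frac{\min(u_2,u_2^{\alpha_2})}{u_2^{\gamma_2+1}} \diff u_2,
$$
so it remains to show that each factor is finite.

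For each $i \in \{1,2\}$ I would split the integral at $u_i = 1$. Since $1 < \gamma_i < \alpha_i \leq 2$, we have $\alpha_i > 1$, hence $\min(u_i,u_i^{\alpha_i}) = u_i^{\alpha_i}$ on $(0,1]$ and $\min(u_i,u_i^{\alpha_i}) = u_i$ on $(1,\infty)$. The contribution near zero is then $\int_0^1 u_i^{\alpha_i - \gamma_i - 1}\diff u_i$, which converges because $\alpha_i - \gamma_i > 0$ (this is exactly why the definition requires $\alpha_i > \gamma_i$), and the contribution near infinity is $\int_1^\infty u_i^{-\gamma_i}\diff u_i$, which converges because $\gamma_i > 1$. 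Combining the two pieces shows each factor is finite, which proves the lemma.

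There is no genuine obstacle here: the statement is essentially a bookkeeping consequence of~\eqref{eq:phibound} together with the standing assumptions $\gamma_i \in (1,2)$. The only points worth stating carefully are the justification for exchanging the order of integration (Tonelli, via non-negativity) and the identification of which inequality controls convergence at which end — $\alpha_i > \gamma_i$ at $0$ and $\gamma_i > 1$ at $\infty$. This also explains, retrospectively, the precise ranges imposed on $\alpha_i$ in Definition~\ref{def:M_high}.
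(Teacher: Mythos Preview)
Your proof is correct and follows exactly the approach the paper indicates: use the defining bound~\eqref{eq:phibound} on $\phi(u)=\int_{\R^2}\mu(B(x,u))^2\,\diff x$ and check integrability of the resulting product against $u_1^{-\gamma_1-1}u_2^{-\gamma_2-1}$. The paper's own proof is a one-line reference to Definition~\ref{def:M_high}; you have simply spelled out the elementary verification (Tonelli, the split at $u_i=1$, and the roles of $\alpha_i>\gamma_i$ versus $\gamma_i>1$) that the paper leaves implicit.
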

\begin{proof}[Proof]
This follows directly from Definition~\ref{def:M_high} of the space $\mathcal{M}^{\gamma_1,\gamma_2}$ by using the estimate in~\eqref{eq:phibound} for the function $\phi$ defined by
\begin{equation}\label{eq:phi}
 \phi(u) \coloneqq \int_{\R^2} \mu(B(x,u))^2 \diff x, \quad \text{for $u \in \R_+^2$.}
\end{equation}
\end{proof}

In the following, we briefly note that all the limiting random fields obtained in Theorems~\ref{thm:high}, \ref{thm:inter}, \ref{thm:lowgaus}, \ref{thm:lowpois}, \ref{thm:lowpoint} and \ref{thm:finite_variance} are well-defined:
\begin{itemize}
\item Using Lemma~\ref{lem:intphifinite}, one can check easily that the right hand side of \eqref{eq:cov_high_gaussian} is a symmetric, positive-semidefinite function such that there is a centred Gaussian linear random field $Z$ with covariance function given by \eqref{eq:cov_high_gaussian}.
\item The existence of $J_I$ follows from Lemma~12.13 in \cite{kallenberg2002foundations} and Lemma~\ref{lem:intphifinite}.
\item The proof of Theorem~\ref{thm:lowgaus} shows that $\sigma^2$ given in \eqref{eq:sigma2lowgaus} is finite.
Hence, it can serve to construct the covariance function of a centred Gaussian linear random field $Y$.
\item The existence of the compensated Poisson integral $J_L(\mu)$ for $\mu \in \mathcal{M}_{L}$ given in \eqref{eq:poissonlineslimit} can be verified by Lemma~12.13 in \cite{kallenberg2002foundations}.
One just has to show
$$
\int_{\R^2 \times \R_+^2} \min \left( \left\vert g(x,u) \right\vert, g(x,u) ^2 \right)\frac{1}{u_1^{\gamma_1+1}} f_2 \left( u_2 \right) \diff(x,u) < \infty,
$$
where
$$
g(x,u) \coloneqq u_2 \int_{\left[x_1-\frac{u_1}{2}, x_1+\frac{u_1}{2}\right]} f_\mu(y_1,x_2)\diff y_1.
$$
This can be seen by a case distinction.
Let us start with the following general consideration.
There is an $\eps >0$ such that
\begin{equation}\label{eq:low_estimate^12}
 \min \left( \vert v \vert ,  v^2 \right) \leq \min \left( \vert v \vert^{\gamma_1-\eps} , \vert v \vert^{\gamma_1+\eps} \right)
\end{equation}
with $1<\gamma_1 - \eps $ and \mbox{$\gamma_1 + \eps < \min \left( \gamma_2, 2 \right)$}.
Furthermore, we observe
\begin{equation}\label{eq:low_estimate2}
 \begin{split}
 & \min \left( \vert ab \vert^{\gamma_1-\eps} , \vert ab \vert^{\gamma_1+\eps} \right) \\
 \leq & \min \left( \vert a \vert^{\gamma_1-\eps} (\vert b \vert^{\gamma_1-\eps}+\vert b \vert^{\gamma_1+\eps}) , \vert a \vert^{\gamma_1+\eps} (\vert b \vert^{\gamma_1+\eps}+\vert b \vert^{\gamma_1-\eps}) \right) \\
 = & \min \left( \vert a \vert^{\gamma_1-\eps} , \vert a \vert^{\gamma_1+\eps} \right) (\vert b \vert^{\gamma_1-\eps}+\vert b \vert^{\gamma_1+\eps}).
 \end{split}
\end{equation}
We use \eqref{eq:low_estimate^12}, \eqref{eq:low_estimate2} and the assumption~\eqref{eq:decay_mu} from Definition~\ref{def:M_lowgaussian} to obtain
\begin{equation}\label{eq:exponent_gamma_1+eps}
 \begin{split}
 &  \min \left( \left\vert g(x,u) \right\vert,   g(x,u) ^2 \right) \\
 \leq  & \min \left( \left(C_\mu g_1(x_1,u_1) e^{-c_\mu \vert x_2 \vert} \right)^{\gamma_1-\eps} ,  \left(C_\mu g_1(x_1,u_1) e^{-c_\mu \vert x_2 \vert}\right)^{\gamma_1+\eps} \right) \\
 & \times (\vert u_2 \vert^{\gamma_1-\eps}+\vert u_2 \vert^{\gamma_1+\eps}), \\
 \end{split}
\end{equation}
where
\begin{equation}\label{eq:g_1}
 g_1(x_1,u_1) \coloneqq \int_{\left[x_1-\frac{u_1}{2}, x_1+\frac{u_1}{2}\right]} e^{-c_\mu \vert y_1 \vert } \diff y_1. 
\end{equation}
Since 
$$
\int_{\R_+} (\vert u_2 \vert^{\gamma_1-\eps}+\vert u_2 \vert^{\gamma_1+\eps}) f_2 \left( u_2 \right) \diff u_2  < \infty
$$
due to $\gamma_1 + \eps < \gamma_2$ and the asymptotic behaviour of $f_2$, and since
$$
\int_{\R} e^{-c_\mu  \vert x_2  \vert({\gamma_1 \pm \eps})} \diff x_2 < \infty,
$$
it remains to show that
\begin{equation}\label{eq:exist_lowpoiss}
 \int_{\R \times \R_+} \min \left( g_1(x_1,u_1)^{\gamma_1-\eps} ,  g_1(x_1,u_1) ^{\gamma_1+\eps} \right) \frac{1}{u_1^{\gamma_1+1}} \diff(x_1,u_1) 
\end{equation}
is finite.
For $u_1 \leq 1$, we obtain
\begin{equation*}
 \begin{split}
 & \int_{\R } g_1(x_1,u_1) ^{\gamma_1+\eps} \diff x_1 \\
 \leq & \, 2 \int_{\R_+ } \left(  \int_{\left[x_1-\frac{u_1}{2}, x_1+\frac{u_1}{2}\right]}  e^{-c_\mu  y_1} \diff y_1 \right)^{\gamma_1+\eps} \diff x_1 \\
 \leq & \,  2 \int_{\R_+ } \left(  \int_{\left[x_1-\frac{u_1}{2}, x_1+\frac{u_1}{2}\right]}  e^{-c_\mu \left( x_1-\frac{u_1}{2}\right) } \diff y_1 \right)^{\gamma_1+\eps} \diff x_1 \\ 
 =& \,  2  e^{c_\mu   \frac{u_1}{2}({\gamma_1+\eps})} u_1^{\gamma_1+\eps}\int_{\R_+ } e^{-c_\mu ({\gamma_1+\eps}) x_1}\diff x_1   \leq   C  u_1^{\gamma_1+\eps}.  \\ 
 \end{split}
\end{equation*}
In the case of $u_1 \geq 1$, we observe
\begin{equation*}
 \begin{split}
 & \int_{\R } g_1(x_1,u_1) ^{\gamma_1-\eps} \diff x_1 \\
 \leq & \, \int_{\left[-\frac{u_1}{2}, \frac{u_1}{2}\right]} \left( \int_{\R} e^{-c_\mu \vert y_1 \vert} \diff y_1 \right)^{\gamma_1-\eps} \diff x_1 + 2 \int_{\left(\frac{u_1}{2}, \infty \right)}  g_1(x_1,u_1) ^{\gamma_1-\eps} \diff x_1  \\
 = & \left( \frac{2}{c_\mu} \right) ^{\gamma_1-\eps} u_1 + \frac{2}{c_\mu^{\gamma_1-\eps}} \int_{\left(\frac{u_1}{2}, \infty \right)} \left( 
 e^{-c_\mu x_1} \left( e^{c_\mu \frac{u_1}{2}} - e^{-c_\mu \frac{u_1}{2}} \right) \right)^{\gamma_1-\eps} \diff x_1  \\
 \leq & \, C \left( u_1 + \left( e^{c_\mu \frac{u_1}{2}} - e^{-c_\mu \frac{u_1}{2}}  \right)^{\gamma_1-\eps} \frac{1}{c_\mu({\gamma_1-\eps})} e^{-c_\mu ({\gamma_1-\eps}) \frac{u_1}{2}}\right)\\
 \leq  & \, C \left( u_1 +
 \left( e^{c_\mu \frac{u_1}{2}} \right)^{\gamma_1-\eps} e^{-c_\mu ({\gamma_1-\eps}) \frac{u_1}{2}}\right) \leq C \left( u_1 +1 \right) \leq C u_1.\\
\end{split}
\end{equation*}
Finally, we can split the integral in \eqref{eq:exist_lowpoiss} into two parts following this case distinction and see that these are bounded by
$$
\int_{\left( 0,1\right]} C u_1^{\gamma_1+\eps} \frac{1}{u_1^{\gamma_1+1}} \diff u_1 < \infty \quad \text{and} \quad \int_{\left( 1, \infty \right)} C u_1 \frac{1}{u_1^{\gamma_1+1}} \diff u_1 < \infty,
$$
respectively.
Therefore, the existence of the compensated Poisson integral $J_L(\mu)$ for $\mu \in \mathcal{M}_{L}$ is proven since the integral in \eqref{eq:exist_lowpoiss} is finite.
We note that in inequality~\eqref{eq:exponent_gamma_1+eps} the particular exponent $\gamma_1-\eps$  is not required for this proof and one could also replace $\gamma_1-\eps$ by 1. However, we stick to the exponent $\gamma_1-\eps$ because we will need the estimates here for later purposes, for instance, in the proof of Theorem~\ref{thm:lowpois}.
\item Since $f_\mu \in L^{\gamma_1}(\R^2)$ for $\mu \in \mathcal{M}_{P}$, the random linear functional $S_{\gamma_1}(\mu)$ given in \eqref{eq:def_stable_int} is well-defined.
We refer to Chapter~3 in \cite{samorodnitsky2000stable} for an extensive discussion.
\item We can deduce from the proof of Theorem~\ref{thm:finite_variance} that the integral in~\eqref{eq:cov_thm_fin_var} is finite and serves to construct the covariance function of a centred Gaussian linear random field $X$.
\end{itemize} 

\subsection{Further useful lemmas}
We continue with some useful lemmas that we use in the proofs of the main results in Section~\ref{sec:proofmain}.

\begin{lem}\label{lem:intasympf}
Let $F$ be a measure on $\R_+^2$ according to \eqref{eq:Fdu} and to the asymptotic behaviour specified there.
Furthermore, let $g$ be a continuous function on $\R_+^2$ such that there is a constant $C>0$ for some $\alpha_i>\gamma_i$ for $i=1,2$ such that
\begin{equation}\label{eq:gboundf}
 \vert g(u) \vert \leq C \min\left(u_1,u_1^{\alpha_1} \right) \min\left(u_2,u_2^{\alpha_2} \right)
\end{equation}
for all $u \in \R_+^2$.
Then, we have as $\rho \to 0$
$$
\int_{\R_+^2} g(u) F_\rho(\diff u) \sim \rho^{\gamma_1+\gamma_2} \int_{\R_+^2} g(u) \frac{1}{u_1^{\gamma_1+1}} \frac{1}{u_2^{\gamma_2+1}} \diff u.
$$
\end{lem}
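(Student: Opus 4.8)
The plan is to unfold the definition of the image measure, rescale, and then decompose the resulting integral into a ``bulk'' region, where dominated convergence applies, and two ``small-scale'' regions, whose contributions are negligible after the renormalisation by $\rho^{\gamma_1+\gamma_2}$. Since $F_\rho$ is the image of $F$ under $u\mapsto\rho u$ and $c_F=1$,
$$
\int_{\R_+^2} g(u)\,F_\rho(\diff u)=\int_{\R_+^2} g(\rho u)\,f_1(u_1)f_2(u_2)\,\diff u=\rho^{-2}\int_{\R_+^2} g(v)\,f_1(v_1/\rho)\,f_2(v_2/\rho)\,\diff v=\rho^{\gamma_1+\gamma_2}\,I_\rho ,
$$
where $I_\rho\coloneqq\int_{\R_+^2} g(v)\,\phi_{1,\rho}(v_1)\,\phi_{2,\rho}(v_2)\,\diff v$ and $\phi_{i,\rho}(v_i)\coloneqq\rho^{-\gamma_i-1}f_i(v_i/\rho)$; here the second equality is the change of variables $v=\rho u$ and the third uses $f_i(v_i/\rho)=\rho^{\gamma_i+1}\phi_{i,\rho}(v_i)$. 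Hence it suffices to show $I_\rho\to I_0\coloneqq\int_{\R_+^2} g(v)\,v_1^{-\gamma_1-1}v_2^{-\gamma_2-1}\,\diff v$ as $\rho\to0$. Note that $I_0$ is finite: by \eqref{eq:gboundf} it is bounded by $C\prod_{i=1,2}\int_0^\infty\min(v_i,v_i^{\alpha_i})\,v_i^{-\gamma_i-1}\,\diff v_i$, and each one-dimensional integral converges at the origin because $\alpha_i>\gamma_i$ and at infinity because $\gamma_i>1$.

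Next I would fix $T_0\ge1$ such that $f_i(t)\le 2t^{-\gamma_i-1}$ for all $t\ge T_0$ and $i=1,2$ (possible by the tail asymptotics $f_i(t)\sim t^{-\gamma_i-1}$), and split $\R_+^2=A_\rho\sqcup B_\rho\sqcup C_\rho$ with $A_\rho\coloneqq\{v_1\ge T_0\rho,\ v_2\ge T_0\rho\}$, $B_\rho\coloneqq\{v_1<T_0\rho\}$ and $C_\rho\coloneqq\{v_1\ge T_0\rho,\ v_2<T_0\rho\}$, writing $I_\rho=I_\rho^A+I_\rho^B+I_\rho^C$ for the corresponding pieces of the integral. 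On $A_\rho$ one has $v_i/\rho\ge T_0$, hence $\phi_{i,\rho}(v_i)=f_i(v_i/\rho)\,(v_i/\rho)^{\gamma_i+1}\,v_i^{-\gamma_i-1}\le 2v_i^{-\gamma_i-1}$; therefore the integrand of $I_\rho^A$ is dominated by the fixed integrable function $4\,|g(v)|\,v_1^{-\gamma_1-1}v_2^{-\gamma_2-1}$. Since $\mathbf 1_{A_\rho}(v)\to1$ for every $v\in\R_+^2$ and, for fixed $v$, $\phi_{i,\rho}(v_i)=f_i(v_i/\rho)(v_i/\rho)^{\gamma_i+1}\,v_i^{-\gamma_i-1}\to v_i^{-\gamma_i-1}$ by the tail asymptotics, dominated convergence yields $I_\rho^A\to I_0$. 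It remains to show $I_\rho^B\to0$ and $I_\rho^C\to0$.

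For $I_\rho^B$ (and symmetrically for $I_\rho^C$), I would use that on $B_\rho$ one has $v_1<T_0\rho\le1$ for $\rho$ small, so $\min(v_1,v_1^{\alpha_1})=v_1^{\alpha_1}$, and then by \eqref{eq:gboundf} together with the product form of $f_1(v_1/\rho)f_2(v_2/\rho)$,
$$
\rho^{\gamma_1+\gamma_2}\,|I_\rho^B|\le C\left(\int_0^{T_0\rho}v_1^{\alpha_1}f_1(v_1/\rho)\,\diff v_1\right)\left(\int_0^\infty\min(v_2,v_2^{\alpha_2})\,f_2(v_2/\rho)\,\diff v_2\right).
$$
Substituting $v_1=\rho t$, the first factor equals $\rho^{\alpha_1+1}\int_0^{T_0}t^{\alpha_1}f_1(t)\,\diff t=O(\rho^{\alpha_1+1})$, the integral being finite because $t^{\alpha_1}\le t$ on $[0,1]$ (so $\int_0^1 t^{\alpha_1}f_1\le\int_0^1 t f_1<\infty$) and $f_1$ is integrable on $[1,T_0]$. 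The second factor is $O(\rho^{\gamma_2+1})$ by an elementary one-dimensional computation (substitute $v_2=\rho s$, split at $s=1/\rho$, and use $f_2(s)\le 2s^{-\gamma_2-1}$ for $s\ge T_0$ together with $\gamma_2>1$ and $\alpha_2>\gamma_2$). Thus $\rho^{\gamma_1+\gamma_2}|I_\rho^B|=O(\rho^{\alpha_1+\gamma_2+2})$, i.e.\ $|I_\rho^B|=O(\rho^{\alpha_1-\gamma_1})\to0$; the same estimate with the two coordinates interchanged (enlarging $C_\rho$ to $\{v_2<T_0\rho\}$ and using $\alpha_1>\gamma_1>1$ for the $v_1$-integral) gives $|I_\rho^C|=O(\rho^{\alpha_2-\gamma_2})\to0$. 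Combining the three pieces proves $I_\rho\to I_0$, hence the lemma. I expect the estimate of $I_\rho^B$ and $I_\rho^C$ to be the main obstacle: dominated convergence cannot be applied there because $\phi_{i,\rho}(v_i)=\rho^{-\gamma_i-1}f_i(v_i/\rho)$ blows up for $v_i$ of order $\rho$, so these small-scale regions must be controlled by a direct computation that exploits simultaneously the finiteness of the first moments of $f_1,f_2$ and the fact that the bound on $g$ vanishes near the origin at the powers $\alpha_i>\gamma_i$ (the crude bound $\min(v_i,v_i^{\alpha_i})\le v_i$ being too weak to beat the normalisation when $\gamma_1+\gamma_2>2$).
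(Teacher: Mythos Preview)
Your proposal is correct and follows essentially the same route as the paper: split the domain into a bulk region $A_\rho=(\rho T_0,\infty)^2$ (the paper's $\Omega_1$) where the tail asymptotics give a dominated-convergence majorant, and small-scale regions near the axes (the paper uses four pieces $\Omega_1,\dots,\Omega_4$, you merge two of them) whose contribution is $o(1)$ by the power bound \eqref{eq:gboundf} with $\alpha_i>\gamma_i$. Note one bookkeeping slip: your displayed bound for $\rho^{\gamma_1+\gamma_2}|I_\rho^B|$ drops the factor $\rho^{-2}$ coming from $\rho^{\gamma_1+\gamma_2}\phi_{1,\rho}\phi_{2,\rho}=\rho^{-2}f_1(\cdot/\rho)f_2(\cdot/\rho)$, so the correct order is $O(\rho^{\alpha_1+\gamma_2})$ rather than $O(\rho^{\alpha_1+\gamma_2+2})$, which still yields your stated conclusion $|I_\rho^B|=O(\rho^{\alpha_1-\gamma_1})\to0$.
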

\begin{proof}[Proof]
The idea of the proof is to split the integral \mbox{$\int_{\R_+^2} g(u) F_\rho(\diff u)$} into four parts and treat the four integrals separately.

Let $\eps>0$ be given and define the constant $c_0$ by
\begin{equation}\label{eq:def_c}
 c_0\int_{\R_+^2} \vert g(u) \vert \frac{1}{u_1^{\gamma_1+1}} \frac{1}{u_2^{\gamma_2+1}} \diff u =  \left\vert \int_{\R_+^2}g(u) \frac{1}{u_1^{\gamma_1+1}} \frac{1}{u_2^{\gamma_2+1}} \diff u \right\vert.
\end{equation}
(We note that $ \int_{\R_+^2} \vert g(u) \vert \frac{1}{u_1^{\gamma_1+1}} \frac{1}{u_2^{\gamma_2+1}} \diff u < \infty $ because of inequality~\eqref{eq:gboundf} and that 
one has to treat the special case with $\int_{\R_+^2}g(u) \frac{1}{u_1^{\gamma_1+1}} \frac{1}{u_2^{\gamma_2+1}} \diff u = 0$ slightly differently.)
Choose $N=N(\eps)$ such that for all $u_i>N$ for $i=1,2$ we have
\begin{equation}\label{eq:N_1}
 f_i(u_i) \leq \frac{2}{u_i^{\gamma_i+1}} 
\end{equation}
and
\begin{equation}\label{eq:N_12}
 \left\vert f_1(u_1)f_2(u_2) - \frac{1}{u_1^{\gamma_1+1}} \frac{1}{u_2^{\gamma_2+1}} \right\vert \leq c_0 \frac{\eps}{8} \frac{1}{u_1^{\gamma_1+1}} \frac{1}{u_2^{\gamma_2+1}},
\end{equation}
which is feasible due to the power-law assumption on the measure $F$. 
We write $\R_+^2 = \bigcup_{k=1}^4 \Omega_k$ with 
\begin{equation}\label{eq:subdomains}
 \begin{split}
  \Omega_1 & \coloneqq (\rho N, \infty)^2, \\
  \Omega_2 & \coloneqq (0, \rho N]^2, \\
  \Omega_3 &\coloneqq (\rho N, \infty) \times (0, \rho N],\\
  \Omega_4 &\coloneqq  (0, \rho N] \times (\rho N, \infty).
 \end{split}
\end{equation}
From now on, we discuss the four corresponding integrals separately. 
\begin{enumerate}
 \item[1.)] Using \eqref{eq:N_12}, we get
 \begin{align} 
 &  \left\vert \int_{\Omega_1} g(u) F_\rho(\diff u) - \rho^{\gamma_1+\gamma_2} \int_{\R_+^2} g(u) \frac{1}{u_1^{\gamma_1+1}} \frac{1}{u_2^{\gamma_2+1}} \diff u \right\vert  \nonumber\\
 \leq &   \int_{\Omega_1} \vert g(u) \vert \left\vert f_1\left(\frac{u_1}{\rho}\right) \frac{1}{\rho}  f_2\left(\frac{u_2}{\rho}\right)\frac{1}{\rho} - \rho^{\gamma_1+\gamma_2} \frac{1}{u_1^{\gamma_1+1}} \frac{1}{u_2^{\gamma_2+1}} \right\vert \diff u  \nonumber\\
 & \quad +  \rho^{\gamma_1+\gamma_2} \int_{\R_+^2 \setminus \Omega_1} \vert g(u) \vert \frac{ 1}{u_1^{\gamma_1+1}} \frac{1}{u_2^{\gamma_2+1}} \diff u \nonumber \\
 \leq & \, c_0 \frac{\eps}{8}  \rho^{\gamma_1+\gamma_2} \int_{\R_+^2} \vert g(u) \vert  \frac{1}{u_1^{\gamma_1+1}} \frac{1}{u_2^{\gamma_2+1}} \diff u \nonumber \\
 & \quad +  \rho^{\gamma_1+\gamma_2} \int_{\R_+^2 \setminus \Omega_1} \vert g(u) \vert \frac{ 1}{u_1^{\gamma_1+1}} \frac{1}{u_2^{\gamma_2+1}} \diff u \label{eq:int_conv_0} \\
 \leq & \, c_0 \frac{\eps}{4}   \rho^{\gamma_1+\gamma_2} \int_{\R_+^2} \vert g(u) \vert  \frac{1}{u_1^{\gamma_1+1}} \frac{1}{u_2^{\gamma_2+1}} \diff u \nonumber 
 \end{align}
 for $\rho$ small enough, where we also used that the integral in \eqref{eq:int_conv_0} converges to zero by the dominated convergence theorem.
 Hence, we can deduce together with the definition of $c_0$ in \eqref{eq:def_c} that there exists some $\rho_1 > 0$ such that for all $\rho < \rho_1$ we get
 \begin{equation*}
  \begin{split}
  & \left\vert\frac{ \int_{\Omega_1} g(u) F_\rho(\diff u)}{\rho^{\gamma_1+\gamma_2} \int_{\R_+^2} g(u) \frac{1}{u_1^{\gamma_1+1}} \frac{1}{u_2^{\gamma_2+1}} \diff u}   -1\right\vert   \\
  = &  \,\frac{\left\vert \int_{\Omega_1} g(u) F_\rho(\diff u) -\rho^{\gamma_1+\gamma_2} \int_{\R_+^2} g(u) \frac{1}{u_1^{\gamma_1+1}} \frac{1}{u_2^{\gamma_2+1}}\diff u \right\vert}{\rho^{\gamma_1+\gamma_2}\left\vert\int_{\R_+^2} g(u) \frac{1}{u_1^{\gamma_1+1}} \frac{1}{u_2^{\gamma_2+1}} \diff u \right\vert} \\ 
  \leq & \,\frac{c_0 \frac{\eps}{4}  \int_{\R_+^2} \vert g(u) \vert  \frac{1}{u_1^{\gamma_1+1}} \frac{1}{u_2^{\gamma_2+1}} \diff u}{\left\vert\int_{\R_+^2} g(u) \frac{1}{u_1^{\gamma_1+1}} \frac{1}{u_2^{\gamma_2+1}} \diff u \right\vert} \leq \frac{\eps}{4}.
  \end{split}
 \end{equation*}
 \item[2.)] We can show $\left\vert \int_{\Omega_2} g(u) F_\rho(\diff u) \right\vert \in o(\rho^{\gamma_1+\gamma_2}).$
 Indeed, using \eqref{eq:gboundf}, we obtain 
 \begin{equation*}
  \begin{split}
  \left\vert \int_{\Omega_2} g(u) F_\rho(\diff u) \right\vert 
  \leq & \, C \int_0^{\rho N} \int_0^{\rho N} u_1^{\alpha_1} u_2^{\alpha_2} f_1\left(\frac{u_1}{\rho} \right) f_2\left(\frac{u_2}{\rho}\right)\frac{1}{\rho^2} \diff u_1  \diff u_2  \\
  = & \, C \rho^{\alpha_1+\alpha_2} \int_0^N \int_0^N u_1^{\alpha_1} u_2^{\alpha_2} f_1(u_1)f_2(u_2) \diff u_1 \diff u_2   \\
  \leq & \, C \rho^{\alpha_1+\alpha_2} N^{\alpha_1+\alpha_2}.
  \end{split}
 \end{equation*} 
 Since $\alpha_1+\alpha_2 > \gamma_1+\gamma_2$, the assertion is true for $\rho \to 0$.
 More precisely, for $\eps$ and $N$ as above there exists some $\rho_2>0$ such that for all $\rho<\rho_2$ we have
 $$
 \left\vert \frac{\int_{\Omega_2} g(u) F_\rho(\diff u)}{\rho^{\gamma_1+\gamma_2} \int_{\R_+^2} g(u) \frac{1}{u_1^{\gamma_1+1}} \frac{1}{u_2^{\gamma_2+1}} \diff u} \right\vert < \frac{\eps}{4}.
 $$
 \item[3.)] We show $\left\vert \int_{\Omega_3} g(u) F_\rho(\diff u) \right\vert \in o(\rho^{\gamma_1+\gamma_2}).$
 We obtain for ${N}$ satisfying~\eqref{eq:N_1}
 \begin{equation*}
 \begin{split}
  & \left\vert \int_{\Omega_3} g(u) F_\rho(\diff u) \right\vert \\
  \leq & \,  \int_{\rho N}^\infty \int_0^{\rho N} \vert g(u) \vert f_2\left(\frac{u_2}{\rho}\right)\frac{1}{\rho} \diff u_2  f_1 \left(\frac{u_1}{\rho}\right) \frac{1}{\rho}  \diff u_1 \\
  \leq & \, C  \int_{\rho N}^\infty \int_0^{\rho N}  \min\left(u_1,u_1^{\alpha_1} \right) \min\left(u_2,u_2^{\alpha_2} \right)  f_2 \left(\frac{u_2}{\rho}\right)\frac{1}{\rho} \diff u_2   \frac{\rho^{\gamma_1}}{u_1^{\gamma_1+1}} \diff u_1 \\
  \leq & \, C     \rho^{\gamma_1} \int_{\rho N}^\infty \min\left(u_1,u_1^{\alpha_1}\right)  \frac{1}{u_1^{\gamma_1+1}} \diff u_1  \int_0^{\rho N} u_2^{\alpha_2} f_2\left(\frac{u_2}{\rho}\right)\frac{1}{\rho} \diff u_2  \\
  = & \, C    \rho^{\gamma_1} \rho^{\alpha_2}  \int_0^{N} u_2^{\alpha_2}  f_2(u_2) \diff u_2  \leq C  \rho^{\gamma_1+\alpha_2} N^{\alpha_2}.\\
 \end{split}
 \end{equation*} 
 Since $\gamma_1+\alpha_2 > \gamma_1+\gamma_2$, we are done.
 In other words, for $\eps$ and $N$ as above, there exists some $\rho_3>0$ such that for all $\rho<\rho_3$ we have
 $$
 \left\vert \frac{\int_{\Omega_3} g(u) F_\rho(\diff u)}{\rho^{\gamma_1+\gamma_2} \int_{\R_+^2} g(u) \frac{1}{u_1^{\gamma_1+1}} \frac{1}{u_2^{\gamma_2+1}} \diff u} \right\vert < \frac{\eps}{4}.
 $$
 \item[4.)]
 Proceeding analogously to 3.), one shows $\left\vert \int_{\Omega_4} g(u) F_\rho(\diff u) \right\vert \in o(\rho^{\gamma_1+\gamma_2}).$
 Again, for $\eps$ and $N$ as above, there exists some $\rho_4>0$ such that for all $\rho<\rho_4$ we obtain
 $$
 \left\vert \frac{\int_{\Omega_4} g(u) F_\rho(\diff u)}{\rho^{\gamma_1+\gamma_2} \int_{\R_+^2} g(u) \frac{1}{u_1^{\gamma_1+1}} \frac{1}{u_2^{\gamma_2+1}} \diff u} \right\vert < \frac{\eps}{4}.
 $$
\end{enumerate}
Finally, we are able to deduce the assertion of the lemma:
We just define \mbox{$\rho_0 \coloneqq \min_{k \in \{1,\ldots,4\}} \rho_k $}.
Then, we obtain for all $\rho<\rho_0$, by splitting the domain of integration as mentioned above,
\begin{equation*}
 \begin{split}
 & \left\vert  \frac{\int_{\R_+^2} g(u) F_\rho(\diff u)}{\rho^{\gamma_1+\gamma_2} \int_{\R_+^2} g(u) \frac{1}{u_1^{\gamma_1+1}} \frac{1}{u_2^{\gamma_2+1}} \diff u} -1\right\vert\\
 \leq &  \left\vert  \frac{\int_{\Omega_1} g(u) F_\rho(\diff u)}{\rho^{\gamma_1+\gamma_2} \int_{\R_+^2} g(u) \frac{1}{u_1^{\gamma_1+1}} \frac{1}{u_2^{\gamma_2+1}} \diff u}-1 \right\vert + \sum_{k=2}^4  \left\vert \frac{\int_{\Omega_k} g(u) F_\rho(\diff u)}{\rho^{\gamma_1+\gamma_2} \int_{\R_+^2} g(u) \frac{1}{u_1^{\gamma_1+1}} \frac{1}{u_2^{\gamma_2+1}} \diff u} \right\vert \\
 \leq & \, \frac{\eps}{4} + \sum_{k=2}^4 \frac{\eps}{4}  = \eps,
 \end{split}
\end{equation*}
where we used the results from the four parts above.
\end{proof}

\begin{lem}\label{lem:intasymp2f}
Let $F$ be a measure on $\R_+^2$ according to \eqref{eq:Fdu} and to the asymptotic behaviour specified there.
Furthermore, let $\left( g_\rho \right)$ be a family of continuous functions on $\R_+^2$ with
$$
\lim_{\rho \to 0} \rho^{\gamma_1 + \gamma_2} g_\rho(u) = 0
$$
for all $u \in \R_+^2$ and
$$
\rho^{\gamma_1 + \gamma_2} \vert g_\rho(u) \vert \leq C \min\left(u_1,u_1^{\alpha_1} \right) \min\left(u_2,u_2^{\alpha_2} \right)
$$
for some constants $C>0$ and $\alpha_i>\gamma_i$ for $i=1,2$ for all $u \in \R_+^2$.
Then, we have
\begin{equation}\label{eq:lem_to_0}
 \lim_{\rho \to 0} \int_{\R_+^2} g_\rho(u) F_\rho(\diff u) = 0.
\end{equation}
\end{lem}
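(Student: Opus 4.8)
The plan is to imitate the four-region argument in the proof of Lemma~\ref{lem:intasympf}. Fix $N$ once and for all so that $f_i(u_i)\le 2/u_i^{\gamma_i+1}$ for all $u_i>N$ and $i=1,2$, i.e.\ so that~\eqref{eq:N_1} holds (no $\eps$ is needed here), write $F_\rho(\diff u)=f_1(u_1/\rho)f_2(u_2/\rho)\rho^{-2}\,\diff u$, and split $\R_+^2=\bigcup_{k=1}^4\Omega_k$ with the sets $\Omega_k$ from~\eqref{eq:subdomains} associated with this $N$. Each integral $\int_{\Omega_k} g_\rho\,F_\rho$ is absolutely convergent for every fixed $\rho>0$ (from $\rho^{\gamma_1+\gamma_2}|g_\rho(u)|\le C\,u_1u_2$ and $\int_{\R_+^2}u_1u_2\,F_\rho(\diff u)<\infty$), so it suffices to show that each of the four pieces tends to $0$ as $\rho\to0$.

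On $\Omega_2$, $\Omega_3$ and $\Omega_4$ only the majorant hypothesis is used, and the pointwise convergence plays no role. Since $\rho^{\gamma_1+\gamma_2}|g_\rho|$ satisfies exactly the bound~\eqref{eq:gboundf} imposed on $g$ in Lemma~\ref{lem:intasympf}, the estimates in parts~2.), 3.) and~4.) of that proof apply verbatim with $g$ replaced by $\rho^{\gamma_1+\gamma_2}g_\rho$ and yield
$$
\Bigl|\int_{\Omega_2} g_\rho\,F_\rho\Bigr|\le C N^{\alpha_1+\alpha_2}\rho^{\alpha_1+\alpha_2-\gamma_1-\gamma_2},\quad
\Bigl|\int_{\Omega_3} g_\rho\,F_\rho\Bigr|\le C N^{\alpha_2}\rho^{\alpha_2-\gamma_2},\quad
\Bigl|\int_{\Omega_4} g_\rho\,F_\rho\Bigr|\le C N^{\alpha_1}\rho^{\alpha_1-\gamma_1},
$$
all of which vanish as $\rho\to0$ because $\alpha_i>\gamma_i$.

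On $\Omega_1=(\rho N,\infty)^2$ the hypothesis $\rho^{\gamma_1+\gamma_2}g_\rho(u)\to 0$ comes into play. There $u_i/\rho>N$, so~\eqref{eq:N_1} gives $f_1(u_1/\rho)f_2(u_2/\rho)\rho^{-2}\le 4\rho^{\gamma_1+\gamma_2}u_1^{-\gamma_1-1}u_2^{-\gamma_2-1}$ and hence
$$
\Bigl|\int_{\Omega_1} g_\rho(u)\,F_\rho(\diff u)\Bigr|\le 4\int_{\R_+^2}\mathds{1}_{\Omega_1}(u)\,\rho^{\gamma_1+\gamma_2}\,|g_\rho(u)|\,\frac{1}{u_1^{\gamma_1+1}}\,\frac{1}{u_2^{\gamma_2+1}}\,\diff u.
$$
I would conclude by dominated convergence: for each fixed $u\in\R_+^2$ one has $u\in\Omega_1$ for all sufficiently small $\rho$ and $\rho^{\gamma_1+\gamma_2}g_\rho(u)\to0$, so the integrand tends to $0$ pointwise; and it is dominated, uniformly in $\rho$, by the $\rho$-independent function $4C\min(u_1,u_1^{\alpha_1})\min(u_2,u_2^{\alpha_2})u_1^{-\gamma_1-1}u_2^{-\gamma_2-1}$, which is integrable over $\R_+^2$ since $\gamma_i>1$ and $\alpha_i>\gamma_i$ (the same elementary case distinction $u_i\le1$ versus $u_i>1$ as in the proof of Proposition~\ref{lem:M_L_in_M_gamma}). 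Summing the four contributions gives~\eqref{eq:lem_to_0}.

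The step that needs genuine attention is the one on $\Omega_1$: one must use that the cutoff is $\rho N$ (so that~\eqref{eq:N_1} is available throughout $\Omega_1$) and pair the vanishing prefactor $\rho^{\gamma_1+\gamma_2}g_\rho$ with a single $\rho$-independent integrable majorant, rather than attempting an $o(\rho^{\gamma_1+\gamma_2})$-type bound as on the other three regions.
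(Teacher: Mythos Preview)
Your proof is correct and follows essentially the same approach as the paper's own proof: both fix $N$ so that~\eqref{eq:N_1} holds, split $\R_+^2$ into the four regions~\eqref{eq:subdomains}, treat $\Omega_2,\Omega_3,\Omega_4$ by recycling the majorant estimates from the proof of Lemma~\ref{lem:intasympf}, and handle $\Omega_1$ by bounding the density by $4\rho^{\gamma_1+\gamma_2}u_1^{-\gamma_1-1}u_2^{-\gamma_2-1}$ and invoking dominated convergence with the integrable majorant $C\min(u_1,u_1^{\alpha_1})\min(u_2,u_2^{\alpha_2})u_1^{-\gamma_1-1}u_2^{-\gamma_2-1}$. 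Your write-up is in fact slightly more explicit than the paper's (you spell out the resulting powers of $\rho$ on $\Omega_2,\Omega_3,\Omega_4$ and carry the indicator $\mathds{1}_{\Omega_1}$), but the argument is the same.
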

\begin{proof}[Proof]
The assumptions on $g_\rho$ ensure that for all $\rho >0$
$$
\int_{\R_+^2} \rho^{\gamma_1+\gamma_2} \vert g_\rho(u) \vert \frac{1}{u_1^{\gamma_1+1}} \frac{1}{u_2^{\gamma_2+1}} \diff u < \infty,
$$
that there is an integrable majorant and that we get
\begin{equation}\label{eq:dom_conv_to0}
 \lim_{\rho \to 0} \int_{\R_+^2} \rho^{\gamma_1+\gamma_2} \vert g_\rho(u) \vert \frac{1}{u_1^{\gamma_1+1}} \frac{1}{u_2^{\gamma_2+1}} \diff u = 0 
\end{equation}
by the dominated convergence theorem.

Due to the power-law assumption on $F$, we can choose $N>0$ such that for all $u_i>N$ for $i=1,2$ we have
\begin{equation}\label{eq:N_1Lem2}
 f_i(u_i) \leq \frac{2}{u_i^{\gamma_i+1}}.
\end{equation}
We use the same definition of the domains $\Omega_k$ for $k=1,\ldots,4$ as in~\eqref{eq:subdomains} and continue discussing the corresponding four integrals separately. First, using~\eqref{eq:N_1Lem2} we get
\begin{equation*}
 \begin{split}
 \left\vert \int_{\Omega_1} g_\rho(u) F_\rho(\diff u) \right\vert 
 \leq & \, \int_{\rho N}^\infty \int_{\rho N}^\infty \vert  g_\rho(u) \vert f_1 \left(\frac{u_1}{\rho}\right) \frac{1}{\rho} f_2\left(\frac{u_2}{\rho}\right)\frac{1}{\rho}     \diff u_1 \diff u_2\\
 \leq & \,  \int_{0}^\infty \int_{0}^\infty \rho^{\gamma_1+\gamma_2} \vert  g_\rho(u) \vert\frac{2}{u_1^{\gamma_1+1}} \frac{2}{u_2^{\gamma_2+1}}  \diff u_1 \diff u_2.\\
 \end{split}
\end{equation*} 
Therefore, we obtain together with \eqref{eq:dom_conv_to0} that
$$
\lim_{\rho \to 0} \int_{\Omega_1} g_\rho(u) F_\rho(\diff u) =0.
$$
Using the second assumption on $g_\rho$ and \eqref{eq:N_1Lem2}, one can check that
$$
\left\vert \int_{\Omega_k} g_\rho(u) F_\rho(\diff u) \right\vert \to 0
$$
as $\rho \to 0$ for $k=2,3,4$ by proceeding analogously to the corresponding parts in the proof of Lemma~\ref{lem:intasympf}.
Combining all four partial results, we can deduce~\eqref{eq:lem_to_0}.
\end{proof}

We introduce for a signed measure $\mu \in \mathcal{M}_{k}$ for $k \in \{L,P\}$ the local averages $m_\mu(x,u)$  by
\begin{equation}\label{eq:m_mu}
 m_\mu(x,u) \coloneqq \frac{1}{u_1 u_2} \int_{B(x,u)} f_\mu(y) \diff y
\end{equation}
and the maximal function $m_\mu^*$ by
\begin{equation}\label{eq:m_mu^*}
 m_\mu^*(x) \coloneqq \sup_{u \in \R^2_+}  \frac{1}{u_1 u_2} \int_{B(x,u)} \vert f_\mu(y) \vert \diff y.
\end{equation} 

\begin{lem}\label{lem:m_mu}
Let $n_i(\rho) \to 0$ as $\rho \to 0$ for $i=1,2$.
\begin{enumerate}
 \item \label{lem:(i)} For $\mu \in \mathcal{M}_{P}$, we have 
 $$
 \lim_{\rho \to 0} m_\mu \left( x, \begin{mysmallmatrix} n_1(\rho) u_1 \\ n_2(\rho) u_2 \end{mysmallmatrix}\right)= f_\mu(x), \quad \text{for all $(x,u) \in \R^2 \times \R_+^2$.}
 $$
 \item \label{lem:(ii)} Let $\beta>1$.
 For $\mu \in \mathcal{M}_{k}$ for $k \in \{L,P\}$, there is a function $g \in L^\beta(\R^2)$ such that $m_\mu^*(x)  \leq g(x)$ for all $x \in \R^2$.
\end{enumerate}
\end{lem}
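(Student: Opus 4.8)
The plan is to treat the two parts separately. For part \ref{lem:(i)}, the idea is a standard Lebesgue-differentiation-type argument, but made elementary by the continuity of $f_\mu$ (recall that $\mu \in \mathcal{M}_P$ has a \emph{continuous} density). Fix $(x,u) \in \R^2 \times \R_+^2$. Since $n_i(\rho) \to 0$, the boxes $B(x, \begin{mysmallmatrix} n_1(\rho) u_1 \\ n_2(\rho) u_2 \end{mysmallmatrix})$ shrink to $\{x\}$ as $\rho \to 0$. Writing
$$
m_\mu\!\left(x, \begin{mysmallmatrix} n_1(\rho) u_1 \\ n_2(\rho) u_2 \end{mysmallmatrix}\right) - f_\mu(x) = \frac{1}{n_1(\rho)n_2(\rho)u_1 u_2} \int_{B(x,\,n(\rho)u)} \big( f_\mu(y) - f_\mu(x) \big) \diff y,
$$
I would bound the right-hand side in absolute value by $\sup\{ |f_\mu(y) - f_\mu(x)| : y \in B(x, n(\rho)u)\}$, which tends to $0$ by continuity of $f_\mu$ at $x$. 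That settles \ref{lem:(i)}.

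For part \ref{lem:(ii)}, fix $\beta > 1$ and $\mu \in \mathcal{M}_k$ for $k \in \{L,P\}$; in both cases $|f_\mu(x)| \leq C_\mu e^{-c_\mu(|x_1|+|x_2|)}$. The maximal function $m_\mu^*$ factors through a product of one-dimensional Hardy--Littlewood-type maximal functions, because the box $B(x,u)$ is a product of intervals: indeed,
$$
m_\mu^*(x) = \sup_{u_1,u_2 > 0} \frac{1}{u_1}\int_{x_1 - u_1/2}^{x_1+u_1/2} \frac{1}{u_2}\int_{x_2 - u_2/2}^{x_2+u_2/2} |f_\mu(y_1,y_2)| \diff y_2 \diff y_1,
$$
so $m_\mu^*(x) \leq (M_1 M_2 |f_\mu|)(x)$, where $M_i$ denotes the (uncentred, one-dimensional) Hardy--Littlewood maximal operator acting in the $i$-th variable. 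The plan is to construct an explicit $L^\beta$ majorant. Since $|f_\mu| \leq g_0$ with $g_0(x) = C_\mu e^{-c_\mu(|x_1|+|x_2|)} = C_\mu h(x_1)h(x_2)$ for $h(t) = e^{-c_\mu|t|}$, monotonicity of the maximal operators gives $m_\mu^*(x) \leq C_\mu (M h)(x_1)\,(M h)(x_2)$, where $M$ is the one-dimensional maximal operator. One checks directly that $Mh(t) \leq C \min(1, 1/|t|)$ for a constant $C = C(c_\mu)$: for $|t| \leq 1$ use $Mh \leq \|h\|_\infty = 1$, and for $|t| > 1$ average over an interval $[t - r/2, t + r/2]$ and split into the cases $r \leq |t|$ and $r > |t|$, bounding the integral of $h$ by $2/c_\mu$ in the former and by $\frac{2}{c_\mu}e^{-c_\mu(|t|-r/2)}/r \cdot r$-type estimates in the latter, to obtain decay of order $1/|t|$. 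Hence $m_\mu^*(x) \leq C \min(1,1/|x_1|)\min(1,1/|x_2|) =: g(x)$, and $\int_{\R^2} g(x)^\beta \diff x = \big(\int_\R \min(1,1/|t|)^\beta \diff t\big)^2 < \infty$ precisely because $\beta > 1$. This $g$ is the desired $L^\beta$ majorant.

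The main obstacle is the quantitative decay estimate $Mh(t) = O(1/|t|)$ for the one-dimensional maximal function of the exponential; everything else is routine. The subtlety is that one must not use the $L^\beta$-boundedness of $M$ as a black box (that would only give $m_\mu^* \in L^\beta$ without the pointwise majorant the later proofs need), so the explicit computation via the case split $r \lessgtr |t|$ is essential. An alternative, if one wants to avoid the maximal-operator language entirely, is to bound $m_\mu^*(x)$ directly: for each $x$, split the supremum over $u$ into small boxes (where $m_\mu(x,u) \leq \|f_\mu\|_\infty \leq C_\mu$) and large boxes (where the exponential tail of $f_\mu$ forces $\int_{B(x,u)}|f_\mu| \leq \int_{\R^2}|f_\mu| < \infty$ while $u_1 u_2$ grows, and more carefully one recovers the $\min(1,1/|x_i|)$ decay by noting that a box of sidelengths $u_1,u_2$ containing mass near the origin must have $u_i \gtrsim |x_i|$). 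Either route yields the same majorant $g$.
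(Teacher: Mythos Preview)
Your proposal is correct and follows essentially the paper's route: for (i) both arguments use continuity of $f_\mu$ on the shrinking boxes, and for (ii) both factor the maximal function into a product of one-dimensional suprema applied to $h(t)=e^{-c_\mu|t|}$, case-split on whether the interval half-length exceeds $|x_i|$, and arrive at the same explicit majorant $g(x)=C\prod_{i=1,2}\min\bigl(1,\,C'/|x_i|\bigr)\in L^\beta(\R^2)$ for $\beta>1$. (In your sketch the two sub-cases $r\lessgtr |t|$ seem to have their bounds swapped---the global estimate $\int_\R h\leq 2/c_\mu$ is what yields the $O(1/|t|)$ decay in the \emph{large}-$r$ case, while the small-$r$ case needs the explicit evaluation and the monotonicity of $r\mapsto \sinh(c_\mu r/2)/r$ that the paper spells out---but this is a minor slip in an otherwise sound plan, and your ``alternative'' paragraph is in fact exactly what the paper does.)
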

\begin{proof}[Proof]
(i) The assertion is true because the function $f_\mu$ is continuous and because there exists for all $\delta>0$ some $\rho_0>0$ small enough such that the set $B \left( x, \begin{mysmallmatrix} n_1(\rho) u_1 \\ n_2(\rho) u_2 \end{mysmallmatrix}\right)$ is contained in the $\ell^\infty$-ball with centre $x$ and radius $\delta$ for all $\rho<\rho_0$.
\noindent
(ii) We only require the assumption~\eqref{eq:decay_mu} on $\mu \in \mathcal{M}_{k}$ for $k \in \{L,P\}$.
We obtain
\begin{align}\label{eq:sup_1d}
 m_\mu^*(x) \leq & \, C_\mu \sup_{u \in \R^2_+}  \frac{1}{u_1 u_2} \int_{B(x,u)}  e^{-c_\mu \vert y_1 \vert} e^{-c_\mu \vert y_2 \vert} \diff y \nonumber \\
 = & \,C_\mu  \prod_{i=1,2} \sup_{u_i \in \R_+}  \frac{1}{u_i} \int_{\left[x_i-\frac{u_i}{2}, x_i+\frac{u_i}{2}\right]}  e^{-c_\mu\vert y_i \vert} \diff y_i
\end{align}
and study the supremum in \eqref{eq:sup_1d} by a case distinction.
Let $x_i>0$.
We estimate
\begin{equation*}
 \begin{split}
 & \, \sup_{u_i >0}  \frac{1}{u_i} \int_{\left[x_i-\frac{u_i}{2}, x_i+\frac{u_i}{2}\right]}  e^{-c_\mu\vert y_i \vert} \diff y_i \\
 \leq & \, \sup_{0< \frac{u_i}{2} \leq x_i}  \frac{1}{u_i} \int_{\left[x_i-\frac{u_i}{2}, x_i+\frac{u_i}{2}\right]}  e^{-c_\mu\vert y_i \vert} \diff y_i + \sup_{\frac{u_i}{2} \geq x_i}  \frac{1}{u_i} \int_{\left[x_i-\frac{u_i}{2}, x_i+\frac{u_i}{2}\right]}  e^{-c_\mu\vert y_i \vert} \diff y_i
 \end{split}
\end{equation*}
and treat the two terms in the last line separately.
For $0< \frac{u_i}{2}\leq x_i$, we get
\begin{align}
 \frac{1}{u_i} \int_{\left[x_i-\frac{u_i}{2}, x_i+\frac{u_i}{2}\right]}  e^{-c_\mu\vert y_i \vert} \diff y_i = & \, \frac{1}{u_i} \frac{1}{c_\mu}  \left(  e^{-c_\mu\left(x_i - \frac{u_i}{2}\right)} -  e^{-c_\mu\left (x_i + \frac{u_i}{2}\right)} \right) \nonumber\\
 = & \, \frac{e^{-c_\mu x_i}}{c_\mu} \frac{e^{\frac{c_\mu u_i}{2}} -  e^{-\frac{c_\mu u_i}{2}}}{u_i} \nonumber  \\
 \leq & \, \frac{e^{-c_\mu x_i}}{c} \frac{e^{c_\mu x_i} -  e^{-c_\mu x_i}}{2x_i}  \label{eq:mon}\\
 \leq & \, \frac{1}{2c_\mu x_i}, \nonumber 
\end{align}
where we used the fact that the function
$$
h(u_i) \coloneqq  \frac{e^{c u_i} -  e^{-cu_i}}{u_i}
$$
is increasing for $u_i\geq 0$ in \eqref{eq:mon}.
This can be seen by
\begin{equation*}
 \begin{split}
 h(u_i) = & \, \frac{1}{u_i} \left( \sum_{k=0}^\infty \frac{(cu_i)^k}{k!}-\sum_{k=0}^\infty \frac{(-cu_i)^k}{k!} \right)   =  \frac{1}{u_i}  \sum_{l=0}^\infty \frac{2(cu_i)^{2l+1}}{(2l+1)!} = 2   \sum_{l=0}^\infty \frac{(cu_i)^{2l}}{(2l+1)!} \\
 \end{split}
\end{equation*}
because the last term is increasing in $u_i$.
For $\frac{u_i}{2} \geq x_i$, we observe
$$
\frac{1}{u_i} \int_{\left[x_i-\frac{u_i}{2}, x_i+\frac{u_i}{2}\right]}  e^{-c_\mu \vert y_i \vert} \diff y_i \leq  \frac{1}{2x_i}  \int_{\R}  e^{-c_\mu \vert y_i \vert} \diff y_i \leq  \frac{1}{c_\mu x_i}.
$$
Combining the estimates, we get
$$
\sup_{u_i >0}  \frac{1}{u_i} \int_{\left[x_i-\frac{u_i}{2}, x_i+\frac{u_i}{2}\right]}  e^{-c_\mu \vert y_i \vert} \diff y_i \leq  \frac{2}{c_\mu x_i}.
$$
The corresponding estimate with $\vert x_i \vert$ for $x_i<0$ follows directly because of symmetry.
Furthermore, we can bound the supremum in \eqref{eq:sup_1d} by
$$
\sup_{u_i >0}  \frac{1}{u_i} \int_{\left[x_i-\frac{u_i}{2}, x_i+\frac{u_i}{2}\right]}  e^{-c_\mu \vert y_i \vert} \diff y_i \leq \sup_{u_i >0}  \frac{1}{u_i} \int_{\left[x_i-\frac{u_i}{2}, x_i+\frac{u_i}{2}\right]} 1 \diff y_i = 1.
$$
Hence, we are able to conclude that
$m_\mu^*(x) \leq g(x)$ for all $x \in \R^2$, where $g$ is defined by
$$
g(x) \coloneqq C_\mu  \prod_{i=1,2} \min \left( 1, \frac{2}{c_\mu \vert x_i \vert} \right),
$$
and we see that $g^\beta$ is integrable with respect to $x$ for any $\beta>1$.  
\end{proof}
 
\begin{rem}
We briefly point out why the continuity condition of the density function~$f_\mu$ is essential in the point scaling regime, in particular in Lemma~\ref{lem:m_mu}~(i).
If the boxes $B\left( x, \begin{mysmallmatrix} n_1(\rho) u_1 \\ n_2(\rho) u_2 \end{mysmallmatrix}\right)$ had been \emph{nicely shrinking sets} in the sense of \cite[p.\ 140]{rudin1987complex}, the condition $f_\mu \in L^1(\R^2)$ would have been sufficient instead of requiring continuity (see Theorem~7.10 in \cite{rudin1987complex}).
In short, the crucial point for shrinking sets in order to be a sequence of nicely shrinking sets is that each set must occupy at least a certain portion of some spherical neighbourhood.
For example, a shrinking grain in the random balls model, where the size of a grain (with predetermined shape) depends only on a \emph{single} distribution, is nicely shrinking.
In contrast, the boxes $B\left( x, \begin{mysmallmatrix} n_1(\rho) u_1 \\ n_2(\rho) u_2 \end{mysmallmatrix}\right)$ in the proof of Theorem~\ref{thm:lowpoint}, where we apply Lemma~\ref{lem:m_mu}~(i), are \emph{not} nicely shrinking sets because the length-to-width ratio of the boxes tends to infinity there.
Hence, we assume in Definition~\ref{def:M_lowpoint} that the density function~$f_\mu$ is continuous such that Lemma~\ref{lem:m_mu}~(i) holds.
\end{rem}

\section{Proofs of the main results}\label{sec:proofmain}
Due to the linearity of the mapping $\mu \mapsto \widetilde{J}_\rho(\mu)$ as well as the linearity of the limiting random fields $Z$, $J_I$, $Y$, $J_L$, $S_{\gamma_1}$ and $X$,
the convergence of the finite-dimensional distributions of the centred and renormalised versions of ${J}_\rho$ is equivalent to the convergence of the one-dimensional distributions.
This can be seen using the Cram\'er-Wold device.
Therefore, we only have to deal with the convergence of the characteristic function (w.l.o.g.\ at 1) $\E \exp\left(i\frac{\widetilde{J}_\rho(\mu)}{n_\rho}\right).$
The strategy of the following proofs is similar to \cite{Bierme2010} and \cite{kaj2007}.
As mentioned above, we use $c$ and $C$ for constants which can differ from line to line and we often make use of the function $\Psi$ defined in~\eqref{def:psi}.

\subsection{Intermediate intensity regime}
\begin{proof}[Proof of Theorem \ref{thm:inter}]
We recall the characteristic function of $\widetilde{J}_\rho(\mu)$ 
$$
\E \left(e^{i\widetilde{J}_\rho(\mu)} \right) = \exp \left( \int_{\R_+^2}\int_{\R^2} \Psi(\mu(B(x,u))) \lambda_\rho \diff x F_\rho(\diff u) \right).
$$
The characteristic function of $J_I(\mu)$ is given by
\begin{equation}\label{eq:cf_intermediate}
 \E \left(e^{iJ_I(\mu)} \right) = \exp \left( \int_{\R^2 \times \R_+^2} \Psi(\mu(B(x,u))) \frac{1}{u_1^{\gamma_1+1}}  \frac{1}{u_2^{\gamma_2+1}} \diff (x,u) \right).
\end{equation}
First, we define the function $\widetilde{\phi}$ by
$$
\widetilde{\phi}(u) \coloneqq \int_{\R^2} \Psi(\mu(B(x,u))) \diff x, \quad \text{for $u \in \R_+^2$.}
$$
We note that one can verify similar to Lemma~6 in~\cite{kaj2007} that $\widetilde{\phi}$ is continuous.
Using $\vert \Psi(v) \vert \leq \frac{v^2}{2}$ and \eqref{eq:phibound}, there are constants $C>0$ and $\alpha_i$ with $\gamma_i < \alpha_i \leq 2$ for $i=1,2$ such that
$$
\vert \widetilde{\phi}(u) \vert \leq C \min\left(u_1,u_1^{\alpha_1} \right) \min\left(u_2,u_2^{\alpha_2} \right).
$$
Now, we apply Lemma \ref{lem:intasympf} with $g \coloneqq \widetilde{\phi}$ to obtain
\begin{equation}\label{eq:inter_generala}
 \int_{\R_+^2} \widetilde{\phi}(u) F_\rho(\diff u) \sim \rho^{\gamma_1+\gamma_2} \int_{\R_+^2} \widetilde{\phi}(u) \frac{1}{u_1^{\gamma_1+1}}  \frac{1}{u_2^{\gamma_2+1}} \diff u.
\end{equation}
Using this and the scaling $\lambda_\rho \rho^{\gamma_1+\gamma_2} \to 1$ shows the assertion.
\end{proof}

\begin{rem}\label{rem:inter_a}
In the general case, let us say $\lambda_\rho \rho^{\gamma_1+\gamma_2} \to a^{2-\gamma_1-\gamma_2} \in (0,\infty)$ with $a>0$ as $\rho \to 0$, the limiting compensated Poisson integral equals $J_I(\mu_a)$, where $\mu_a(\cdot) \coloneqq \mu\left(a^{-1} \, \cdot\right)$.
To see this, one can apply Theorem~\ref{thm:inter} to $\widetilde{J}'_\rho(\cdot)$ where $\lambda'_\rho \coloneqq \lambda_\rho / a^{2-\gamma_1-\gamma_2}$.
Then, the result follows after an appropriate substitution.
\end{rem}

\subsection{High intensity regime}
\begin{proof}[Proof of Theorem \ref{thm:high}]
For the sake of simplicity, we introduce 
$$
\phi_\rho (u) \coloneqq \int_{\R^2}  \Psi\left(\frac{\mu(B(x,u))}{n_\rho} \right) \diff x, \quad \text{for $u \in \R_+^2$,}
$$
with 
$n_\rho \coloneqq \sqrt{\lambda_\rho \rho^{\gamma_1+\gamma_2}}$
and recall that the characteristic function of
$
\frac{\widetilde{J}_\rho(\mu)}{n_\rho}
$
is given by
$$
\exp \left( \int_{\R_+^2} \phi_\rho (u)  \lambda_\rho F_\rho(\diff u) \right).
$$
The goal is to show the convergence of this characteristic function to
$$
\exp \left( - \frac{1}{2} \int_{\R^2 \times \R_+^2} \mu(B(x,u))^2 \frac{1}{u_1^{\gamma_1+1}} \frac{1}{u_2^{\gamma_2+1}} \diff (x,u) \right),
$$
which corresponds to a centred Gaussian random variable.
The covariance function given in \eqref{eq:cov_high_gaussian} can then be obtained by the linearity of $Z$.

Since by assumption $n_\rho \to \infty$ as $\rho \to 0$, we know that $\Psi\left(\frac{\mu(B(x,u))}{n_\rho} \right) $ can be approximated by $-\frac{1}{2} \left(\frac{\mu(B(x,u))}{n_\rho} \right)^2.$
To be more precise, we write
\begin{equation}\label{eq:intsplit}
 \int_{\R_+^2} \phi_\rho (u)  \lambda_\rho F_\rho(\diff u) = - \frac{1}{2} \int_{\R_+^2} \phi (u)\frac{\lambda_\rho}{n_\rho^2}   F_\rho(\diff u) + \int_{\R_+^2} \Delta_\rho(u) F_\rho(\diff u),
\end{equation}
where $\phi$ is given in \eqref{eq:phi} and
\begin{equation*}
 \begin{split}
 \Delta_\rho(u)  \coloneqq & \, \phi_\rho(u) \lambda_\rho+ \frac{1}{2}   \phi(u) \frac{\lambda_\rho}{n_\rho^2}\\
 = &\, \lambda_\rho \int_{\R^2}\left( \Psi\left(\frac{\mu(B(x,u))}{n_\rho} \right) + \frac{1}{2} \left( \frac{\mu(B(x,u))}{n_\rho} \right)^2 \right) \diff x.
 \end{split}
\end{equation*}
Using Lemma \ref{lem:intasympf} together with \eqref{eq:phibound}, the first integral on the right hand side of \eqref{eq:intsplit} converges to
$\int_{\R_+^2} \phi(u) \frac{1}{u_1^{\gamma_1+1}}  \frac{1}{u_2^{\gamma_2+1}} \diff u$.
Here, we refer again to Lemma~6 in~\cite{kaj2007} in order to check the continuity of $\phi$.

It remains to show that the second integral on the right hand side of~\eqref{eq:intsplit} converges to zero.
For this purpose, we show that $\Delta_\rho$ satisfies the assumptions on $g_\rho$ in Lemma~\ref{lem:intasymp2f}.

First, one can show that the estimates $\left\vert \Psi(v) + \frac{v^2}{2}\right\vert \leq \vert v \vert ^3$ and 
$$
\int_{\R^2} \vert \mu(B(x,u))\vert^3 \diff x \leq \Vert \mu \Vert ^2 \int_{\R^2} \vert \mu(B(x,u))\vert \diff x \leq \Vert \mu \Vert ^3 u_1 u_2
$$
hold.
Therefore, we obtain
$$
\left\vert \rho^{\gamma_1+\gamma_2} \Delta_\rho(u) \right\vert = \left\vert \frac{n_\rho^2}{\lambda_\rho} \Delta_\rho(u) \right\vert \leq \frac{\Vert \mu \Vert^3}{n_\rho}  u_1 u_2 \to 0
$$
as $\rho \to 0$, which shows that the first assumption of Lemma \ref{lem:intasymp2f} is satisfied.
Using $\vert \Psi(v)\vert \leq \frac{v^2}{2}$ and \eqref{eq:phibound}, the second assumption is also satisfied because we get
\begin{equation*}
 \begin{split}
 \rho^{\gamma_1+\gamma_2}\left\vert  \Delta_\rho(u) \right\vert =  & \left\vert\frac{n_\rho^2}{\lambda_\rho} \Delta_\rho(u) \right\vert \\
 \leq & \, n_\rho^2 \int_{\R^2}\left( \left\vert \Psi\left(\frac{\mu(B(x,u))}{n_\rho} \right) \right\vert+ \frac{1}{2} \left( \frac{\mu(B(x,u))}{n_\rho} \right)^2 \right) \diff x\\
 \leq & \,  n_\rho^2 \int_{\R^2}  \left( \frac{\mu(B(x,u))}{n_\rho} \right)^2 \diff x\\
 =& \int_{\R^2} \mu(B(x,u))^2 \diff x \leq  C \min\left(u_1,u_1^{\alpha_1} \right) \min\left(u_2,u_2^{\alpha_2} \right).
 \end{split}
\end{equation*}
\end{proof}

\subsection{Low intensity regime}
\subsubsection{Points scaling regime}
\begin{proof}[Proof of Theorem \ref{thm:lowpoint}]
In a first step, we prove that 
$$
\lim_{\rho \to 0} \E \exp\left({i\frac{\widetilde{J}_\rho(\mu)}{\lambda_\rho^{{1}/{\gamma_1}} \rho^2}}\right) = \exp \left( c_2^{\gamma_1} \int_{\R^2} \int_{\R_+} \Psi(u_1 f_\mu(x)) \frac{1}{u_1^{\gamma_1+1}} \diff u_1 \diff x \right),
$$
where $c_2$ is defined in \eqref{eq:c2_gamma_stable} below.
In a second step, we show that the right hand side is the characteristic function of an integral with respect to a stable random measure.

\emph{Step 1:}
We recall that the characteristic function of $\frac{\widetilde{J}_\rho(\mu)}{\lambda_\rho^{{1}/{\gamma_1}} \rho^2}$ can be written as
\begin{equation}\label{eq:char_func_J/n}
 \exp \left( \int_{\R_+^2}\int_{\R^2}  \Psi \left( \frac{1}{\lambda_\rho^{{1}/{\gamma_1}} \rho^2} \int_{B(x,u)} f_\mu(y) \diff y\right) \lambda_\rho \diff x F_\rho(\diff u) \right). 
\end{equation}
We use the definition of $m_\mu(x,u)$ in~\eqref{eq:m_mu} and the density of the scaled measure~$F$ from \eqref{eq:Fdu} to obtain
\begin{align}\label{eq:low_int_du}
 & \int_{\R_+^2}\int_{\R^2} \Psi \left( \frac{1}{\lambda_\rho^{{1}/{\gamma_1}} \rho^2}  \int_{B(x,u)} f_\mu(y) \diff y\right) \lambda_\rho \diff x F_\rho(\diff u) \nonumber \\ 
 = & \int_{\R_+^2}\int_{\R^2} \Psi \left( \frac{u_1 u_2}{\lambda_\rho^{{1}/{\gamma_1}} \rho^2} m_\mu(x,u) \right) \lambda_\rho 
 f_1 \left( \frac{u_1}{\rho}\right) \frac{1}{\rho} f_2 \left( \frac{u_2}{\rho}\right) \frac{1}{\rho} \diff x \diff u \nonumber \\ 
 = &  \int_{\R^2 \times \R_+^2} \Psi \left( u_1 m_\mu \left (x,\begin{mysmallmatrix} \lambda_\rho^{{1}/{\gamma_1}}\rho \frac{u_1} {u_2} \\ \rho u_2 \end{mysmallmatrix} \right) \right) 
 \frac{ \lambda_\rho^{1+{1}/{\gamma_1}}}{ u_2} f_1 \left( \lambda_\rho^{{1}/{\gamma_1}} \frac{u_1}{ u_2}\right)
 f_2(u_2) \diff (x,u),
\end{align} 
where we substituted first $u_2 = \rho \widetilde{u}_2$ and then $u_1 = {\lambda_\rho^{{1}/{\gamma_1}} \rho} \frac{\widetilde{u}_1}{\widetilde{u}_2}$ in the last line.
We note that
$$
\lim_{\rho \to 0} m_\mu \left (x,\begin{mysmallmatrix} \lambda_\rho^{{1}/{\gamma_1}}\rho \frac{u_1} {u_2} \\ \rho u_2 \end{mysmallmatrix} \right) = f_\mu(x)
$$
because of Lemma \ref{lem:m_mu}~(i) and that 
\begin{equation*}
 \begin{split}
 \frac{ \lambda_\rho^{1+{1}/{\gamma_1}}}{ u_2} f_1 \left( \lambda_\rho^{{1}/{\gamma_1}} \frac{u_1}{ u_2}\right)
 = &  \, \frac{ \lambda_\rho^{1+{1}/{\gamma_1}}}{ u_2} f_1 \left( \lambda_\rho^{{1}/{\gamma_1}} \frac{u_1}{ u_2}\right) \left( \lambda_\rho^{{1}/{\gamma_1}} \frac{u_1}{ u_2}\right)^{\gamma_1+1}\left( \lambda_\rho^{{1}/{\gamma_1}} \frac{u_1}{ u_2}\right)^{-\gamma_1-1}\\
 = &  \, f_1 \left( \lambda_\rho^{{1}/{\gamma_1}} \frac{u_1}{ u_2}\right) \left( \lambda_\rho^{{1}/{\gamma_1}} \frac{u_1}{ u_2}\right)^{\gamma_1+1}
 \frac{ u_2^{\gamma_1}}{ u_1^{\gamma_1+1}} \to \frac{ u_2^{\gamma_1}}{ u_1^{\gamma_1+1}}
 \end{split}
\end{equation*}
as $\rho \to 0$ because of $\lambda_\rho^{1+{1}/{\gamma_1}} \to \infty$ and the asymptotic behaviour of $f_1$.
Therefore, the integrand in \eqref{eq:low_int_du} converges to
$$
\Psi \left( u_1 f_\mu(x) \right) \frac{1}{u_1^{\gamma_1+1}}u_2^{\gamma_1}f_2(u_2).
$$
If we can also find an integrable majorant of the integrand in \eqref{eq:low_int_du}, we obtain that
\begin{equation*}
 \begin{split}
 & \lim_{\rho \to 0} \int_{\R_+^2}\int_{\R^2} \Psi \left( \frac{1}{\lambda_\rho^{{1}/{\gamma_1}} \rho^2}  \int_{B(x,u)} f_\mu(y) \diff y\right) \lambda_\rho \diff x F_\rho(\diff u) \\
 = & \,c_2^{\gamma_1} \int_{\R^2} \int_{\R_+} \Psi(u_1 f_\mu(x)) \frac{1}{u_1^{\gamma_1+1}} \diff u_1 \diff x \\
 \end{split}
\end{equation*}
by the dominated convergence theorem, where $c_2$ is defined by
\begin{equation}\label{eq:c2_gamma_stable}
 c_2 \coloneqq \left( \int_{\R_+} u_2^{\gamma_1}f_2(u_2) \diff u_2 \right)^{{1}/{\gamma_1}}.
\end{equation}
In order to find such a majorant, one can show
\begin{equation}\label{eq:low_estimate1_phi}
 \vert \Psi(v) \vert \leq 2\min \left( \vert v \vert ,  v^2 \right)
\end{equation}
and we note that there is an $\eps >0$ with $1<\gamma_1-\eps<\gamma_1+\eps <2$ such that \eqref{eq:low_estimate^12} and \eqref{eq:low_estimate2} hold.
For all $\rho<\rho_0$ with $\rho_0$ small enough, the integrand (see~\eqref{eq:low_int_du}) is therefore dominated by
\begin{equation}\label{eq:majorantintegrandlowp}
 2\min \left(\vert u_1 \vert^{\gamma_1- \eps} , \vert u_1 \vert^{\gamma_1 + \eps} \right) ( \vert m_\mu^*(x) \vert^{\gamma_1- \eps} + \vert m_\mu^*(x) \vert^{\gamma_1+ \eps})  \frac{c_{f_1}}{u_1^{\gamma_1+1}} u_2^{\gamma_1} f_2(u_2),
\end{equation} 
where we also used the technical assumption in~\eqref{eq:addrequirelow}.
Finally, we can see that \eqref{eq:majorantintegrandlowp} is integrable because of Lemma \ref{lem:m_mu}~(ii) and $1<\gamma_1 - \eps$.

\emph{Step 2:}
We deal with the integral
\begin{equation}\label{eq:exp_stable_int}
 \int_{\R^2} \int_{\R_+} \Psi(u_1 f_\mu(x)) \frac{1}{u_1^{\gamma_1+1}} \diff u_1 \diff x. 
\end{equation}
We split the integration over $\R^2$ into $\{x \colon f_\mu(x) \geq 0 \}$ and $\{x \colon f_\mu(x) < 0 \}$ and note that $\Psi(0)=0$.
We recall ${f_\mu}_+ \coloneqq \max\left({f_\mu},0\right)$ and ${f_\mu}_- \coloneqq - \min\left({f_\mu},0\right)$.
The substitution $\widetilde{u}_1=u_1 f_\mu(x)$ shows that \eqref{eq:exp_stable_int} equals
$$
d_{\gamma_1} \Vert {f_\mu}_+ \Vert_{\gamma_1}^{\gamma_1} + \bar{d}_{\gamma_1}\Vert {f_\mu}_- \Vert_{\gamma_1}^{\gamma_1},
$$
where $\bar{d}_{\gamma_1}$ is the complex conjugate of $d_{\gamma_1} \coloneqq \int_{\R_+} \Psi(u_1) \frac{1}{u_1^{\gamma_1+1}} \diff u_1 $.
We obtain
$$
d_{\gamma_1}= \frac{\Gamma(2-\gamma_1)}{\gamma_1(\gamma_1-1)} \cos \left( \frac{\pi \gamma_1}{2}\right) \left(1-i \tan \left( \frac{\pi \gamma_1}{2} \right) \right)
$$
due to \cite[p.\ 170]{samorodnitsky2000stable}.
Therefore, we can finally conclude that
\begin{equation*}
 \begin{split}
 & \lim_{\rho \to 0} \log \E \exp \left({i\frac{\widetilde{J}_\rho(\mu)}{ c_{\gamma_1,\gamma_2} \lambda_\rho^{{1}/{\gamma_1}} \rho^2}}\right) \\
 =  & \, c_2^{\gamma_1} \left( d_{\gamma_1} \left\Vert \frac{{f_\mu}_+}{c_{\gamma_1} c_2} \right\Vert_{\gamma_1}^{\gamma_1} + \bar{d}_{\gamma_1} \left\Vert \frac{{f_\mu}_-}{c_{\gamma_1} c_2} \right\Vert_{\gamma_1}^{\gamma_1} \right) \\
 =  & - \left( \Vert {f_\mu}_+ \Vert_{\gamma_1}^{\gamma_1} + \Vert {f_\mu}_- \Vert_{\gamma_1}^{\gamma_1} \right) + i \tan \left( \frac{\pi \gamma_1}{2} \right) \left( \Vert {f_\mu}_+ \Vert_{\gamma_1}^{\gamma_1} - \Vert {f_\mu}_- \Vert_{\gamma_1}^{\gamma_1} \right) \\
 =  & -\sigma_\mu^{\gamma_1} \left(1-i \beta_\mu \tan \left( \frac{\pi \gamma_1}{2} \right) \right),
 \end{split}
\end{equation*}
where 
\begin{equation}\label{eq:c1c2_gamma_stable}
 c_{\gamma_1,\gamma_2} \coloneqq c_{\gamma_1} c_2, \quad
 c_{\gamma_1} \coloneqq \left( - \frac{\Gamma(2-\gamma_1)}{\gamma_1(\gamma_1-1)} \cos \left( \frac{\pi \gamma_1}{2}\right) \right)^{{1}/{\gamma_1}},
\end{equation}
$c_2$ is given in \eqref{eq:c2_gamma_stable} and $\sigma_\mu$, $ \beta_\mu$ are given in~\eqref{eq:sigma_mu}.
\end{proof}

\subsubsection{Poisson-lines scaling regime}
\begin{proof}[Proof of Theorem \ref{thm:lowpois}]
We recall the characteristic function of $\frac{\widetilde{J}_\rho(\mu)}{\rho}$ given in~\eqref{eq:char_func_J/n}.
We proceed as in the proof of Theorem \ref{thm:lowpoint}.
Using the definition of $m_\mu(x,u)$ in~\eqref{eq:m_mu} and the density of the scaled measure~$F$ from~\eqref{eq:Fdu}, we obtain
\begin{align}\label{eq:lowpois_int_du}
 & \int_{\R_+^2}\int_{\R^2} \Psi \left( \frac{1}{\rho}  \int_{B(x,u)} f_\mu(y) \diff y\right) \lambda_\rho \diff x F_\rho(\diff u) \nonumber \\
 = &  \int_{\R_+^2}\int_{\R^2} \Psi \left( \frac{u_1 u_2}{\rho} m_\mu(x,u) \right) \lambda_\rho 
 f_1 \left( \frac{u_1}{\rho}\right) \frac{1}{\rho} f_2 \left( \frac{u_2}{\rho}\right) \frac{1}{\rho} \diff x \diff u \nonumber \\
 = &  \int_{\R^2 \times \R_+^2} \Psi \left( u_1 u_2 m_\mu \left(x,\begin{mysmallmatrix} u_1 \\ \rho u_2 \end{mysmallmatrix} \right) \right)
 \frac{\lambda_\rho}{\rho} f_1 \left( \frac{u_1}{\rho}\right) f_2 \left( u_2 \right) \diff (x,u),
\end{align} 
where we substituted $u_2 = \rho \widetilde{u}_2$ in the last line.
We note that due to \eqref{eq:low_pointwiseconv} in Definition~\ref{def:M_lowgaussian} of the space $\mathcal{M}_{L}$ 
$$
\lim_{\rho \to 0} u_1 u_2 m_\mu \left (x,\begin{mysmallmatrix} u_1\\ \rho u_2 \end{mysmallmatrix} \right) = u_2 \int_{\left[x_1-\frac{u_1}{2},x_1+\frac{u_1}{2}\right]} f_\mu(y_1,x_2) \diff y_1
$$
(pointwise for all $(x,u) \in \R^2 \times \R_+^2$) and that
\begin{equation*}
 \begin{split}
 \frac{\lambda_\rho}{\rho} f_1 \left( \frac{u_1}{\rho}\right)
 = &  \,   \frac{\lambda_\rho}{\rho} f_1 \left( \frac{u_1}{\rho}\right) \left( \frac{u_1}{\rho}\right) ^{\gamma_1+1}  \left( \frac{\rho}{u_1}\right)^{\gamma_1+1} \\
 = &  \,  f_1 \left( \frac{u_1}{\rho}\right) \left( \frac{u_1}{\rho}\right) ^{\gamma_1+1}  \lambda_\rho  \rho^{\gamma_1}  \frac{1}{u_1^{\gamma_1+1}} \to \frac{1}{u_1^{\gamma_1+1}} \\
 \end{split}
\end{equation*}
as $\rho \to 0$ because of $1 / \rho \to \infty$, the asymptotic behaviour of $f_1$ and the fact that $\lambda_\rho  \rho^{\gamma_1} \to 1$.
Therefore, the integrand in \eqref{eq:lowpois_int_du} converges to
\begin{equation}\label{eq:low_pois_generala}
 \Psi \left(  u_2 \int_{\left[x_1-\frac{u_1}{2},x_1+\frac{u_1}{2}\right]} f_\mu(y_1,x_2)\diff y_1 \right)\frac{1}{u_1^{\gamma_1+1}} f_2 \left( u_2 \right).
\end{equation}
If we can also find an integrable majorant of the integrand in \eqref{eq:lowpois_int_du}, we obtain that
\begin{align}\label{eq:low_pois_generala_lim}
 & \lim_{\rho \to 0} \int_{\R_+^2}\int_{\R^2} \Psi \left( \frac{1}{\rho}  \int_{B(x,u)} f_\mu(y) \diff y\right) \lambda_\rho \diff x F_\rho(\diff u) \nonumber \\
 = &  \int_{\R^2 \times \R_+^2}
 \Psi \left(  u_2 \int_{\left[x_1-\frac{u_1}{2},x_1+\frac{u_1}{2}\right]} f_\mu(y_1,x_2)\diff y_1 \right)\frac{1}{u_1^{\gamma_1+1}} f_2 \left( u_2 \right) \diff (x,u)
\end{align}
by the dominated convergence theorem.
Using the estimates in~\eqref{eq:low_estimate1_phi} and~\eqref{eq:low_estimate^12}, an extended version of \eqref{eq:low_estimate2} and the technical assumption in~\eqref{eq:addrequirelow}, we see that the integrand in \eqref{eq:lowpois_int_du} is dominated by
\begin{equation}\label{eq:majorantintegrandlowpois}
 \begin{split}
 & 2 \min \left(\vert u_1 \vert^{\gamma_1- \eps} , \vert u_1 \vert^{\gamma_1 + \eps} \right)
 ( \vert u_2 \vert^{\gamma_1- \eps} + \vert u_2  \vert^{\gamma_1+ \eps}) \\
 & \times ( \vert m_\mu^*(x) \vert^{\gamma_1- \eps} + \vert  m_\mu^*(x) \vert^{\gamma_1+ \eps}) \frac{c_{f_1}}{u_1^{\gamma_1+1}} f_2(u_2)
 \end{split}
\end{equation} 
for all $\rho<\rho_0$ with $\rho_0$ small enough.
Here, we have to choose $\eps >0$ such that ${1<\gamma_1 - \eps}$, ${\gamma_1 + \eps < 2}$ as well as $\gamma_1 + \eps < \gamma_2$.
These conditions together with Lemma~\ref{lem:m_mu}~(ii) ensure that \eqref{eq:majorantintegrandlowpois} is integrable.

Since the characteristic function $\E \left(e^{iJ_L(\mu)} \right)$ of the limit $J_L(\mu)$ is given by the exponential of \eqref{eq:low_pois_generala_lim}, the convergence of the characteristic function is proven.
\end{proof}

\begin{rem}\label{rem:lowpois_a}
In the general case, let us say $\lambda_\rho \rho^{\gamma_1} \to a^{2-\gamma_1} \in (0,\infty)$ with $a>0$ as $\rho \to 0$, we obtain
$
\frac{\widetilde{J}_\rho(\mu) }{a \rho} \to J_L(\mu_{a}),
$
where we recall $\mu_a(\cdot) \coloneqq \mu\left(a^{-1} \, \cdot\right)$.
In order to prove this, we note that one gets \eqref{eq:low_pois_generala_lim} with the additional factor $a^{2-\gamma_1}$ for the logarithm of the characteristic function of the limit in the general case.
Then, one can deduce the result after an appropriate substitution.
\end{rem}

\subsubsection{Gaussian-lines scaling regime}
\begin{proof}[Proof of Theorem \ref{thm:lowgaus}]
We recall the characteristic function of $\frac{\widetilde{J}_\rho(\mu) }{{\rho}^{1-\eta / 2}}$,
which, after the substitution $u_2 = \rho \widetilde{u}_2$, equals
$$
\exp \left( \int_{\R^2 \times \R_+^2} \Psi\left( \frac{\mu \left(B \left(x,\begin{mysmallmatrix} u_1 \\ \rho u_2 \end{mysmallmatrix} \right)\right)}{{\rho}^{1-\eta / 2}}  \right)
\frac{ \lambda_\rho}{\rho} f_1 \left( \frac{u_1}{\rho} \right) f_2(u_2) \diff (x,u) \right).
$$
The goal is to show for some $\sigma^2>0$ the convergence of this characteristic function to $\exp \left( - \sigma^2 /2 \right)$, which corresponds to a centred Gaussian random variable.

To be more precise, we write
\begin{align}
 & \int_{\R^2 \times \R_+^2} \Psi\left( \frac{ \mu \left(B \left(x,\begin{mysmallmatrix} u_1 \\ \rho u_2 \end{mysmallmatrix} \right)\right)}{{\rho}^{1-\eta / 2}} \right)
 \frac{ \lambda_\rho}{\rho} f_1 \left( \frac{u_1}{\rho} \right) f_2(u_2) \diff (x,u) \nonumber \\
 = & - \frac{1}{2} \int_{\R^2 \times \R_+^2} u_2^2 \left( \frac{\mu \left(B \left(x,\begin{mysmallmatrix} u_1 \\ \rho u_2 \end{mysmallmatrix} \right)\right)}{\rho u_2} \right)^2  \rho^\eta \frac{\lambda_\rho}{\rho} f_1 \left( \frac{u_1}{\rho} \right) f_2(u_2) \diff (x,u) \label{eq:intlowsplit_1} \\
 & +\int_{\R^2 \times \R_+^2} \Delta_\rho(u,x)\frac{ \lambda_\rho}{\rho} f_1 \left( \frac{u_1}{\rho} \right) f_2(u_2) \diff (x,u), \label{eq:intlowsplit_2}
\end{align}
where
\begin{equation}\label{eq:deltagaus}
 \begin{split}
 & \Delta_\rho(u,x)
 \coloneqq  \Psi\left( \rho^{\eta/2-1} \mu \left(B \left(x,\begin{mysmallmatrix} u_1 \\ \rho u_2 \end{mysmallmatrix} \right)\right) \right)+  \frac{1}{2}\left( \rho^{\eta/2-1} \mu \left(B \left(x,\begin{mysmallmatrix} u_1 \\ \rho u_2 \end{mysmallmatrix} \right)\right) \right)^2. \\
 \end{split}
\end{equation}
First, we discuss the integral in \eqref{eq:intlowsplit_2} in the case of $\gamma_2>3$.
Since we have $\left\vert \Psi(v) + \frac{v^2}{2}\right\vert \leq \vert v \vert ^3$, we can bound \eqref{eq:deltagaus} and can thus bound the integrand by
\begin{align}\label{eq:bound2integral}
 & \,  \rho^{3 \eta/2} u_2^3 \left( \frac{ \vert \mu \left(B \left(x,\begin{mysmallmatrix} u_1 \\ \rho u_2 \end{mysmallmatrix} \right)\right) \vert}{\rho u_2} \right)^3 \frac{ \lambda_\rho}{\rho} f_1 \left( \frac{u_1}{\rho} \right) f_2(u_2) \nonumber \\
 \leq & \, C_\mu^3 \rho^{\eta/2} u_2^3 
 \left( \frac{1}{\rho u_2} \int_{B \left(x,\begin{mysmallmatrix} u_1 \\ \rho u_2 \end{mysmallmatrix} \right)}  e^{-c_\mu \vert y_1 \vert} e^{-c_\mu \vert y_2 \vert} \diff y 
 \right) ^3
 \lambda_\rho \rho^{\eta-1} c_{f_1} \left( \frac{\rho}{u_1} \right)^{\gamma_1+1} f_2(u_2) \nonumber \\
 \leq & \,C \rho^{\eta/2} u_2^3 
 g_1(x_1,u_1)  ^3
 g_2(x_2) ^3
 \frac{1}{u_1^{\gamma_1+1}} f_2(u_2),
\end{align}
for $\rho<\rho_0$ with $\rho_0$ small enough, where $g_1$ is given in~\eqref{eq:g_1} and
$$
g_2(x_2) \coloneqq  \min \left( 1, \frac{2}{c_\mu \vert x_2 \vert} \right).
$$
Here, we used the assumption~\eqref{eq:decay_mu} from Definition~\ref{def:M_lowgaussian}, the technical assumption in~\eqref{eq:addrequirelow} and the fact that \mbox{$\lambda_\rho \rho^{\gamma_1+\eta} \to 1$}.
Furthermore, we used
$$
\sup_{\rho > 0}  \frac{1}{\rho  u_2} \int_{\left[x_2-\frac{\rho  u_2}{2}, x_2+\frac{\rho  u_2}{2}\right]}  e^{-c_\mu\vert y_2 \vert} \diff y_2 \leq g_2(x_2)
$$
from the proof of Lemma~\ref{lem:m_mu}~(ii).
By \eqref{eq:bound2integral}, we see that the integrand in~\eqref{eq:intlowsplit_2} has an integrable majorant since we assumed $\gamma_2 > 3$ and because $g_2^3$ is integrable with respect to $x_2$ and $g_1(x_1,u_1)^3 /{u_1^{\gamma_1+1}} $ is also integrable (in order to check this, one just has to follow the lines below~\eqref{eq:exist_lowpoiss}).
Moreover, the majorant converges to zero because of \mbox{$\rho^{\eta/2} \to 0$}.

In the case of $2 < \gamma_2 \leq 3$, we note that there is an $\eps>0$ such that $2 < \gamma_2 - \eps < 3$ as well as $\left\vert \Psi(v) + \frac{v^2}{2}\right\vert \leq \vert v \vert ^{\gamma_2 - \eps}$.
The last-mentioned estimate can be deduced from Lemma~1 in~\cite{kaj2007} by a case distinction (cf.~\eqref{eq:low_estimate^12}).
Similar to above, we can bound the integrand in \eqref{eq:intlowsplit_2} by
\begin{align}\label{eq:bound2integral_extended}
 & \, \rho^{{(\gamma_2 - \eps)} \eta/2} \left( \frac{ \vert \mu \left(B \left(x,\begin{mysmallmatrix} u_1 \\ \rho u_2 \end{mysmallmatrix} \right)\right) \vert}{\rho} \right)^{\gamma_2 - \eps} \frac{ \lambda_\rho}{\rho} f_1 \left( \frac{u_1}{\rho} \right) f_2(u_2) \nonumber \\
 \leq & \, \rho^{{(\gamma_2 - \eps-2+2)} \eta/2} u_2^{\gamma_2-\eps} \left( \frac{\vert \mu \left(B \left(x,\begin{mysmallmatrix} u_1 \\ \rho u_2 \end{mysmallmatrix} \right)\right) \vert}{\rho u_2} \right)^{\gamma_2-\eps} \frac{ \lambda_\rho}{\rho} c_{f_1} \left( \frac{\rho}{u_1} \right)^{\gamma_1+1} f_2(u_2) \nonumber \\
 \leq & \, C  \rho^{{(\gamma_2 - \eps-2)} \eta/2} u_2^{\gamma_2-\eps} 
 \left(
 g_1(x_1,u_1) 
 g_2(x_2)
 \right)^{\gamma_2-\eps}
 \lambda_\rho \rho^{\gamma_1+\eta}  \frac{1}{u_1^{\gamma_1+1}} f_2(u_2) \nonumber \\
 \leq & \,C \rho^{{(\gamma_2 - \eps-2)} \eta/2} u_2^{\gamma_2-\eps}
 g_1(x_1,u_1) ^{\gamma_2-\eps}
 g_2(x_2)^{\gamma_2-\eps}
 \frac{1}{u_1^{\gamma_1+1}} f_2(u_2)
\end{align}
for $\rho<\rho_0$ with $\rho_0$ small enough.
Using $\gamma_1<\gamma_2-\eps$, we can see by \eqref{eq:bound2integral_extended} that the integrand in \eqref{eq:intlowsplit_2} has an integrable majorant because $g_2^{\gamma_2-\eps}$ and $g_1(x_1,u_1)^{\gamma_2-\eps}/{u_1^{\gamma_1+1}}$ are integrable (with the same reasons as above) and that it converges to zero because of ${\gamma_2-\eps-2}>0$.

Therefore, we obtain in both cases that the integral in \eqref{eq:intlowsplit_2} converges to zero by the dominated convergence theorem.

Next, we deal with the integral in \eqref{eq:intlowsplit_1} and show that it converges to
\begin{equation}\label{eq:sigma2lowgaus}
 \sigma^2 \coloneqq \int_{\R^2 \times \R_+^2} u_2^2 \left( \int_{\left[x_1-\frac{u_1}{2},x_1+\frac{u_1}{2}\right]} f_\mu(y_1,x_2) \diff y_1 \right)^2 \frac{f_2(u_2)}{u_1^{\gamma_1+1}}   \diff(x,u).
\end{equation}
The convergence of the integrand can be seen similar to above using Definition~\ref{def:M_lowgaussian} of the space $\mathcal{M}_{L}$, the asymptotic behaviour of $f_1$ and the fact that \mbox{$\lambda_\rho \rho^{\gamma_1+\eta} \to 1$}.
A majorant of the integrand is given by
\begin{equation*}
 C u_2^2  g_1(x_1,u_1) ^2 g_2(x_2)^2  \frac{1}{u_1^{\gamma_1+1}}  f_2(u_2), 
\end{equation*}
which is integrable for $\gamma_2>2$.
Applying the dominated convergence theorem, the convergence of the characteristic function is proven.
By linearity, the covariance function given in \eqref{eq:cov_low_gaussian} follows from \eqref{eq:sigma2lowgaus}.
\end{proof}

\begin{rem}\label{rem:lowgaus_a}
In the general case, let us say $\lambda_\rho \rho^{\gamma_1+\eta} \to a^{2} \in (0,\infty)$ with $a>0$ as $\rho \to 0$, the limit is a centred Gaussian linear random field which is given by $( Y(a \mu))_\mu$, where $a \mu$ has the density $a f_\mu$ and the variance of $ Y(a \mu)$ is just $ a^2 \sigma^2$.
This can be seen since we obtain the additional factor $a^{2}$ in~\eqref{eq:sigma2lowgaus}.
\end{rem}

\subsection{The finite variance case}
\begin{proof}[Proof of Theorem \ref{thm:finite_variance}]
We use the definition of $m_\mu(x,u)$ in \eqref{eq:m_mu} to obtain for the logarithm of the characteristic function of $\frac{\widetilde{J}_\rho(\mu)}{\rho^2 \sqrt{ \lambda_\rho v_1 v_2}}$
\begin{equation*}\label{eq:finite_var_int_du}
 \begin{split}
 & \, \int_{\R_+^2}\int_{\R^2} \Psi \left( \frac{1}{\rho^2 \sqrt{ \lambda_\rho v_1 v_2}}  \int_{B(x,u)} f_\mu(y) \diff y\right) \lambda_\rho \diff x F_\rho(\diff u) \\
 = & \, \int_{\R_+^2}\int_{\R^2} \Psi \left( \frac{u_1 u_2}{\rho^2 \sqrt{ \lambda_\rho v_1 v_2}} m_\mu(x,u) \right) \lambda_\rho \diff x F_\rho(\diff u) \\
 = & \, \int_{\R_+^2}\int_{\R^2} \Psi \left(\frac{u_1 u_2}{\sqrt{ \lambda_\rho v_1 v_2}} m_\mu \left (x,\begin{mysmallmatrix} \rho u_1 \\ \rho u_2 \end{mysmallmatrix} \right) \right) \lambda_\rho \diff x F(\diff u),
 \end{split}
\end{equation*} 
where we substituted $u_2 = \rho \widetilde{u}_2$ and $u_1 = \rho \widetilde{u}_1$ in the last line.
We note that
$$
\lim_{\rho \to 0} m_\mu \left (x,\begin{mysmallmatrix} \rho u_1\\ \rho u_2 \end{mysmallmatrix} \right) = f_\mu(x)
$$
because of Lemma \ref{lem:m_mu}~(i).
Due to the estimate $\left\vert \Psi(v) + \frac{v^2}{2}\right\vert \leq \vert v \vert ^3$ and $\lambda_\rho \to 0$, we get
$$
\lim_{\rho \to 0} \Psi \left(\frac{u_1 u_2}{\sqrt{ \lambda_\rho v_1 v_2}} m_\mu \left (x,\begin{mysmallmatrix} \rho u_1 \\ \rho u_2 \end{mysmallmatrix} \right) \right) \lambda_\rho
=-\frac{u_1^2 u_2^2 f_\mu(x)^2}{2 v_1 v_2}.
$$
Furthermore, we use $\left\vert \Psi(v)\right\vert \leq \frac{v^2}{2}$ and the definition of $m_\mu^*(x)$ in \eqref{eq:m_mu^*} to obtain
$$
\Psi \left(\frac{u_1 u_2}{\sqrt{ \lambda_\rho v_1 v_2}} m_\mu \left (x,\begin{mysmallmatrix} \rho u_1 \\ \rho u_2 \end{mysmallmatrix} \right) \right) \lambda_\rho
\leq \frac{u_1^2 u_2^2 m_\mu^*(x)^2}{2 v_1 v_2}.
$$
Since the right hand side can serve as an integrable majorant, we can apply the dominated convergence theorem and obtain
$$
\lim_{\rho \to 0} \E \exp \left({i\frac{\widetilde{J}_\rho(\mu)}{{\rho^2 \sqrt{ \lambda_\rho v_1 v_2}}}} \right) = \exp \left(-\frac{1}{2} \int_{\R^2} f_\mu(x)^2 \diff x\right),
$$
which is the characteristic function of a Gaussian random variable.
Finally, we can conclude that the limiting random field is a centred Gaussian linear random field with covariance function given in \eqref{eq:cov_thm_fin_var}.
\end{proof}

\paragraph{Acknowledgement.}
The work of S.\ Schwinn is supported by the `Excellence Initiative' of the German Federal and State Governments and the Graduate School of Computational Engineering at Technische Universit\"at Darmstadt.

\end{document}